\documentclass[12pt, leqno]{article}

\usepackage{amsmath}
\usepackage{amsthm}
\usepackage{amssymb}
\usepackage{latexsym}
\usepackage{graphicx}
\allowdisplaybreaks[1]
\usepackage{amsfonts}
\usepackage{ascmac}
\usepackage{color}
\usepackage{enumerate}
\usepackage{bm}
\usepackage{ulem}

\theoremstyle{definition}
\theoremstyle{plain}
\allowdisplaybreaks[1]
\date{}
\newtheorem{Thm}{Theorem}[section]
\newtheorem{Prop}[Thm]{Proposition}
\newtheorem{Lemma}[Thm]{Lemma}

\newcommand{\p}{\partial}

\newcommand{\dis}{\displaystyle}

\newcommand{\norm}{\parallel}

\newcommand{\N}{{\mathbb N}}
\newcommand{\R}{{\mathbb R}}

\newcommand{\ep}{\varepsilon}

\newcommand{\2}{\frac{1}{2} }
\newcommand{\wto}{\rightharpoonup}

\newcommand{\meas}{\,{\rm meas}}
\newcommand{\Lloc}{L^1_{t\text{-loc}}}

\title{\bf Co-moving volumes and Reynolds transport theorem in DiPerna-Lions theory}
\author{Kohei Soga\footnote{Department of Mathematics, Faculty of Science and Technology, Keio University, 3-14-1 Hiyoshi, Kohoku-ku, Yokohama 223-8522  Japan. E-mail: soga@math.keio.ac.jp}}

\begin{document}
\maketitle
\begin{abstract} 
\noindent Co-moving volumes and Reynolds transport theorem along a fluid flow are fundamental ingredients for deriving balance laws in fluid mechanics, where the classical theory of  flow maps of ODEs associated to smooth vector fields plays a central role.  
In connection with weak solutions of the Navier-Stokes equations, DiPerna-Lions (Invent. Math. 1989) generalized the classical notions of ODEs and flow maps to vector fields in Sobolev spaces. 
DiPerna-Lions theory also characterizes the evolution of the measure of  the inverse image of each Borel measurable set under generalized flow maps in terms of the divergence of vector fields. 
On the other hand, the image of a measurable set under generalized flow maps, which corresponds to  a co-moving volume in the classical theory, is not necessarily measurable. Consequently, Reynolds transport theorem cannot be formulated directly in terms of the image. 
In this paper, we show that if a Borel measurable set is trimmed by removing a suitable null set, its image becomes measurable, and its measure is consistent with the measure of the classical untrimmed case  in the smooth approximation limit. 
Then, by defining  co-moving volumes with such a trimming procedure,  we prove  a version of Reynolds transport theorem for  generalized flow maps. 

\medskip

\noindent{\bf Keywords:}  Reynolds transport theorem; Co-moving volumes;  DiPerna-Lions theory
  \medskip

\noindent{\bf AMS subject classifications:}  35Q49; 35A30; 76A02
  
\end{abstract}
\setcounter{section}{0}
\setcounter{equation}{0}
\section{Introduction}

In fluid dynamics, the mathematical description of fluid motion begins with the assumption that there exists a fluid flow map $X(s,t,\xi)$, where $X(s,t,\xi)$ denotes  the position at time $s$  of the fluid element that is located at position $\xi$ at time $t$, satisfying the following properties for all $s,t,\tau, \xi$: 
\begin{align*} 
&X(s,t,\cdot)\mbox{ is a diffeomorphism},\quad X(t,t,\xi)\equiv\xi,\quad X(s,\tau,X(\tau,t,\xi))=X(s,t,\xi).
\end{align*}
The velocity of each fluid element is then defined as 
\begin{align}\label{ino}
\tilde{v}(s,t,\xi):=\frac{\p}{\p s}X(s,t,\xi),
\end{align}
from which the velocity field on the fluid domain is induced as  
$$v(s,x):=\tilde{v}(s,0,X(0,s,x)).$$  
Note that,  due to the group property of $X$, it holds that $X(s,t,\cdot)^{-1}=X(t,s,\cdot)$ and $\tilde{v}(s,t,\xi)=\tilde{v}(s,\tau,X(\tau,t,\xi))$ for all $s,t,\tau,\xi$. 
Hence, we have $v(s,x):=\tilde{v}(s,0,X(0,s,x))=\tilde{v}(s,t,X(t,s,x))$ for any $t$, i.e., we may choose $0$ as the reference time without loss of generality. 
Furthermore, if we set  $\gamma(s):=X(s,t,\xi)$ for each fixed $(t,\xi)$, we have $\xi= X(t,s,\gamma(s))$ and  by \eqref{ino},   
\begin{align}\label{ODE0}
\gamma'(s)=v(s,\gamma(s)), 
\end{align}
i.e., the trajectory $\gamma(\cdot)$ of the fluid element with initial condition $(t,\xi)$ satisfies \eqref{ODE0} and $X$ is nothing but the flow map of this ODE. 
In the literature, $X$ is often called a Lagrangian flow map, $\tilde{v}$ a Lagrangian velocity and  $(s,\xi) $ the Lagrangian coordinate, while $v$ is called an Eulerian velocity field and $(s,x)$ the Eulerian coordinate. 
We refer to page 5 of  Landau-Lifshitz \cite{LL} and Section 3.12 of Bennett  \cite{AB} for a historical remark that Leonhard Euler was the first one who formulated fluid motion in terms of both coordinates $(s,\xi)$ and $(s,x)$, and  the terminology ``Lagrangian'' in this context is not necessarily consistent with the historical development.   

The governing equations for $v$ and other physical quantities, such as the density $\rho$, the pressure $p$, etc.,  are derived as balance laws within co-moving volumes $X(s,0,A)$, e.g., the momentum balance can be written as 
\begin{align}\label{MB}
\frac{d}{ds}\int_{X(s,0,A)}\rho(s,x)v(s,x)dx=[\mbox{total force acting on $\overline{X(s,0,A)}$}].
\end{align}
Reynolds transport theorem (see Section 2 for a precise formulation) provides a formula to switch $\frac{d}{ds}$ and $\int_{X(s,0,A)}$, which leads to pointwise expressions of balance laws in the limit of $A$ shrinking to a single point. 
For instance,  \eqref{MB} for  a homogeneous incompressible fluid flow forced by a body force $b$ leads to the Navier-Stokes equations   
\begin{align}\label{NS}
 \p_s v(s,x)+ \Big(v(s,x)\cdot\nabla\Big) v(s,x)
=-\frac{1}{\rho}\nabla p(s,x)+\nu\Delta v(s,x)+b(s,x).
\end{align}
If \eqref{NS} admits a smooth solution $v$, the fluid flow map $X$ can be  recovered from the classical ODE theory applied to \eqref{ODE0}. 

 Foias-Guillop\'e-Temam \cite{FGT} justified the ODE \eqref{ODE0} and its flow map $X$ associated with $v$ being a certain Leray-Hopf weak solution of \eqref{NS}, which provided an important precursor to DiPerna-Lions theory.  
 DiPerna-Lions \cite{DL} established the existence of a flow map of  \eqref{ODE0}  generated by $v$ belonging to $L^1([0,T];W^{1,1}_{\rm loc}(\R^n))$ with (the spatial divergence) $\nabla\cdot v\in L^1([0,T];L^\infty(\R^n))$ and $\frac{|v(t,x)|}{1+|x|}\in L^1([0,T];L^1(\R^n))+L^1([0,T];L^\infty(\R^n))$, where the smooth approximation of $v$ and the linear transport equation play a central role in the analysis.   
In this paper, we study DiPerna-Lions theory in a bounded domain in $\R^3$, where the bounded domain case was briefly mentioned in the original work \cite{DL}, but to the best of our knowledge, a detailed exposition is not available in the literature. 

Throughout this paper, we assume the following\footnote{$ L^p_{t\text{-loc}}(\R\times\Omega):=\{u:\R\times\Omega\to\R\,|\,\mbox{measurable, } \mbox{$u|_{t\in[T_0,T_1]}\in L^p([T_0,T_1]\times\Omega)$ for each $T_0<T_1$}\}.$}
\footnote{The assumption $\meas(\Omega)=\meas(\overline{\Omega})$ is used, e.g., in the proof of Theorem \ref{FL-preservative2}, since the generalized flow map may take values on $\p\Omega$. }: 
\begin{align}\label{v}
\begin{cases}
&\mbox{$\Omega\subset \R^3$ is a bounded connected open set such that $\meas(\Omega)=\meas(\overline{\Omega})$}, \\
&v=(v_1,v_2,v_3)\in \Lloc(\R\times\Omega)^3,\quad \mbox{where $\norm v\norm_{L^1([T_0,T_1]\times\Omega)^3}=\dis\sum_{i=1}^3\norm v_i\norm_{L^1([T_0,T_1]\times\Omega)}$},\\
&\mbox{there exist the weak derivatives $\p_{x_j} v_i \in  \Lloc(\R\times\Omega)$ for $i,j=1,2,3$},  \\
&\mbox{$v_i(t,\cdot)\in W^{1,1}_0(\Omega)$ for a.e. $t\in\R$ and $i=1,2,3$, \quad $\nabla\cdot v\in L^1_{\rm loc}(\R;L^\infty(\Omega))$}.
\end{cases}
\end{align}
We always work with $(t,x)$-measurability of functions and we do not write $v\in L^1_{\rm loc}(\R;W^{1,1}_0(\Omega))^3$.  
Note that $W^{1,1}_0(\Omega)$ corresponds to the no-slip boundary condition in fluid mechanics; for simplicity, we do not consider the non-penetration boundary condition: $v(s,x)\in {\rm T}_x\p\Omega$ for each $(s,x)\in\R\times\p\Omega$.     
 \begin{Thm}[DiPerna-Lions theorem on a bounded domain]\label{Thm-DL}
 Suppose \eqref{v}. Then, there exists a flow map $X:\R\times\R\times\Omega\to\overline{\Omega} $ associated with the ODE $\gamma'(s)=v(s,\gamma(s))$ in the following sense:  
 \begin{enumerate}
 \item For each fixed $t\in\R$, the map $X(\cdot,t,\cdot):\R\times\Omega\to\overline{\Omega} $ is measurable and $v(\cdot,X(\cdot,t,\cdot))$ belongs to $ L^1_{s\text{-loc}}(\R\times\Omega)^3$. 

 \item It holds that  for all $s,t,\tau\in\R$ and each  Borel measurable set $A\subset\Omega$,
\begin{align}\label{pre1-2}
&X(t,s, X(s,t,x))=x\quad \mbox{ for a.e. $x\in\Omega$},\\\label{pre2-2}
 & X(s,t,x)=X(s,\tau, X(\tau,t,x)) \quad \mbox{for  a.e. $x\in\Omega$},\\
\label{pre3-3}
 & \frac{1}{c^{s,t}}\meas(A)\le\meas(X(s,t,\cdot)^{-1}(A))\le c^{s,t}\meas(A),
\end{align}
where  $X(s,t,\cdot)^{-1}(A)=\{x\in\Omega\,|\,X(s,t,x)\in A\}$ stands for the inverse image and $c^{s,t}:=e^{\norm \nabla\cdot v\norm_{L^1(I^{s,t};L^\infty( \Omega))}}$ with  $I^{s,t}=[s,t]$ or $[t,s]$. 

 \item For each $t\in\R$ and a.e. $x\in\Omega$, 
 the function $v(\cdot,X(\cdot,t,x))$ is locally integrable  and there exists a null set $N(t,x)\subset\R$ for which\footnote{It follows from this statement that there exists an absolutely continuous curve $\gamma(\cdot)$ that is equal to $X(\cdot,t,x)$  on $ \R\setminus N(t,x)$ and satisfies  $\gamma(s)=x+\int_t^s v(r,\gamma(r))dr$ for all $s\in\R$ or $\gamma'(s)=v(s,\gamma(s))$ a.e. $s\in\R$ with $\gamma(t)=x$.}  
 $$X(s,t,x)=x+\int_t^s v(r,X(r,t,x))dr\quad\mbox{ for all $s\in \R\setminus N(t,x)$}.$$
 \end{enumerate}
 \end{Thm}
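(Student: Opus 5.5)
We follow the DiPerna--Lions strategy of smooth approximation combined with the renormalization property of the linear transport equation, adapted to the bounded domain. Since $v_i(t,\cdot)\in W^{1,1}_0(\Omega)$, we extend $v$ by zero outside $\Omega$. The extension $\bar v$ belongs to $L^1_{\rm loc}(\R;W^{1,1}(\R^3))^3$, is compactly supported in $\overline\Omega$, and satisfies $\nabla\cdot\bar v=\mathbf 1_\Omega\nabla\cdot v\in L^1_{\rm loc}(\R;L^\infty(\R^3))$. This holds because the distributional gradient of a zero extension of a $W^{1,1}_0$ function has no boundary part.

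\textbf{Step 1 (approximation and stability).} We mollify in $x$ (and, for convenience, in $t$) to obtain smooth, globally Lipschitz-in-$x$ fields $v^\ep$ on $\R^3$. We let $X^\ep$ denote their classical flow maps, which satisfy the group property exactly. The Liouville formula $\det D_xX^\ep(s,t,x)=\exp\int_t^s(\nabla\cdot v^\ep)(r,X^\ep(r,t,x))dr$ gives \eqref{pre3-3} for $X^\ep$ with constant $c^{s,t}_\ep\to c^{s,t}$. For each $(t,x)$ with $x\in\Omega$ and each $s$, we then prove that $X^\ep(s,t,x)\in\overline\Omega$ (or at least in a shrinking neighborhood of it). A cleaner route is to cut off so that $v^\ep$ is supported in $\Omega$. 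For instance, we mollify $\bar v$ composed with interior approximations $v\chi_k$, where $\chi_k\in C_c^\infty(\Omega)$, at the cost of an error $v\otimes\nabla\chi_k$ in the divergence. That error is controlled in $L^1$ because $v\in W^{1,1}_0$, and we must verify that it does not destroy the $L^\infty$ divergence bound in the limit.

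\textbf{Step 2 (convergence in measure).} Fix $t$. The key claim is that $X^\ep(\cdot,t,\cdot)$ is Cauchy in $L^1_{\rm loc}(\R\times\Omega)$. We would obtain this through the transport equation, in the classical DiPerna--Lions way. For $\beta$ bounded and smooth, $u^\ep(s,x)=\beta(X^\ep(t,s,x))$ solves $\p_su^\ep+v^\ep\cdot\nabla u^\ep=0$. Uniqueness of renormalized weak solutions for $\bar v$ then shows that $u^\ep$ converges to the unique solution. Uniqueness rests on the commutator lemma $\rho_\delta*(\bar v\cdot\nabla u)-\bar v\cdot\nabla(\rho_\delta*u)\to0$ in $L^1_{\rm loc}$ for $\bar v\in W^{1,1}$ and $u\in L^\infty$, together with the $L^\infty$ bound on the divergence and compact support, so no growth condition is needed. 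Applying this to $\beta(x)=x_i$ and to $\beta(x)=|x|^2$ (both bounded on $\overline\Omega$) yields strong $L^2$ convergence of $X^\ep$ to a limit $X$, not merely weak convergence. We would then pass to a subsequence converging a.e. and obtain the measurability in item 1.

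\textbf{Step 3 (passing properties to the limit).} The measure bound \eqref{pre3-3} follows from the bounds for $X^\ep$. We first test with continuous $\phi$: $\int\phi(X^\ep)dx\le c_\ep\int\phi$. Approximation then extends this to Borel $A$, using $\meas(\p\Omega)=0$ to handle values on $\p\Omega$. The uniform compressibility bound gives $v^\ep(r,X^\ep(r,t,x))\to v(r,X(r,t,x))$ in $L^1_{\rm loc}$. To prove this, we approximate $v$ by a continuous $w$ and split the difference into three terms, each controlled by $c^{s,t}\|v-w\|_{L^1}$ or by uniform continuity. This gives item 1 and, after passing to the limit in the integral identity $X^\ep(s,t,x)=x+\int_t^sv^\ep(r,X^\ep)dr$, also item 3. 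The null set $N(t,x)$ arises from a.e. convergence in $s$. The identities \eqref{pre1-2} and \eqref{pre2-2} follow by passing to the limit in the exact group identities for $X^\ep$. For this we need $X^\ep(s,\tau,X^\ep(\tau,t,x))\to X(s,\tau,X(\tau,t,x))$ in $L^1$, and this again follows from the compressibility bound together with approximating $X(s,\tau,\cdot)$ by continuous maps (Lusin).

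\textbf{Main obstacle.} We expect the hardest step to be the boundary: guaranteeing that the approximate flows stay in $\overline\Omega$ while keeping a uniform $L^\infty$ divergence bound. We would handle it with the cut-off approximation described in Step 1, and, if that fails, by working on $\R^3$ with $\bar v$ and proving afterwards that $X(s,t,x)\in\overline\Omega$ a.e. Since $\bar v=0$ outside $\Omega$, the transport equation preserves $\mathbf 1_{\R^3\setminus\overline\Omega}$, and the measure bound \eqref{pre3-3} then shows that the set of $x\in\Omega$ mapped outside $\overline\Omega$ is null.
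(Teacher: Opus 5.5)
Your overall architecture (zero extension, mollification, uniqueness via the commutator lemma, strong convergence, then passing to the limit using a uniform compressibility bound) matches the paper's, but Step 2 does not deliver what Steps 1 and 3 need.

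\textbf{Step 2 controls the wrong maps and the wrong kind of convergence.} With $t$ fixed, $u^\ep(s,x)=\beta(X^\ep(t,s,x))$ describes the backward maps $(s,x)\mapsto X^\ep(t,s,x)$. Items 1 and 3 concern the trajectory maps $(s,x)\mapsto X^\ep(s,t,x)$, and there the varying time $s$ is the \emph{data} time of the transport equation, not its active time. Moreover, uniqueness plus the $|x|^2$ trick only give strong convergence in space--time $L^2_{\rm loc}$, i.e.\ at almost every active time along subsequences. Item 2, however, is claimed for \emph{every} $s,t,\tau$, and your Step 3 needs $X^\ep(s,t,\cdot)\to X(s,t,\cdot)$ for each fixed pair. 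Two ingredients are missing.
(a) You need a representative of each weak solution in $C^0(\mathbb{R};L^2)$, and convergence of the approximations at every time, uniformly on compact intervals. The paper obtains this from the energy identity and Gronwall in Lemma~\ref{LT-norm} and Propositions~\ref{LT-unique} and~\ref{LT-L2converge}; the full DiPerna--Lions stability theorem would also supply it, but you do not state it.
(b) You need a transfer to the other time variable. For each $t$ this means producing a jointly measurable map $(s,x)\mapsto X(s,t,x)$ whose slice is the correct $L^2(\Omega)$-limit for \emph{every} $s$, with $X^\ep(\cdot,t,\cdot)\to X(\cdot,t,\cdot)$ in $L^2_{\rm loc}$. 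This is Proposition~\ref{LT-kasoku-2}; once (a) is available, dominated convergence in $s$ does it.
Without (a) and (b), neither item 1 nor the limit in $X^\ep(s,t,x)=x+\int_t^s v^\ep(r,X^\ep(r,t,x))\,dr$ is justified.

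\textbf{The cut-off route in Step 1 fails.} We have $\nabla\cdot(\chi_k v)=\chi_k\nabla\cdot v+v\cdot\nabla\chi_k$. Here $|\nabla\chi_k|$ blows up in a boundary layer while $v$ is only $W^{1,1}$, so the second term is not bounded in $L^1_tL^\infty_x$ uniformly in $k$. An $L^1$ bound on this error is not what Liouville's formula needs, so the uniform compressibility constant, on which everything else rests, is lost. Your fallback is the right choice and is exactly what the paper does. With $v^\ep$ the mollified zero extension, $v^\ep\equiv 0$ on the open set $\{x:\mathrm{dist}(x,\overline\Omega)>\ep\}$, so trajectories starting in $\Omega$ never enter it. Hence a.e.\ limits lie in $\overline\Omega$ (Lemma~\ref{FL-image}, part 1). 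The set of $x\in\Omega$ sent outside $\Omega$ has measure at most $c^{s,t}_\ep\meas(\{x:\mathrm{dist}(x,\overline\Omega)<\ep\}\setminus\Omega)\to 0$, since $\meas(\overline\Omega)=\meas(\Omega)$.

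With these repairs your plan is sound, and it differs from the paper usefully in two places. First, running whole-space theory for $\bar v$ avoids the leakage estimate \eqref{sop2}. Second, proving \eqref{pre3-3} for the limit first and then approximating by continuous functions replaces the Egorov-based Lemmas~\ref{FL-image}--\ref{FL-product} with a shorter standard argument. The paper needs that machinery anyway for Theorems~\ref{FL-preservative2} and~\ref{main}.
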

We refer to Ambrosio \cite{A1}, \cite{A2} for further developments in the theory of flow maps associated to nonsmooth vector fields including BV-regularity. 
 Theorem \ref{Thm-DL} means that for each initial time $t\in\R$ and for a.e. initial position $x\in\Omega$, there exists at least one absolutely continuous solution of  the ODE \eqref{ODE0}, and one can collect these solutions to form a measurable map $X$.  
DiPerna-Lions proved that such collection is unique, but this does not imply (a.e.)  pointwise uniqueness of  \eqref{ODE0}. 
Almost everywhere pointwise uniqueness was first proven by Robinson-Sadowski \cite{RS} in the case of $v$ being a suitable Leray-Hopf weak solution to unforced incompressible Navier-Stokes equations, while pointwise  nonuniqueness on a set of positive measure was first established in  Bru\'e-Colombo-De\,Lellis \cite{BCL}; we refer to Caravenna-Crippa \cite{CC}, Pitcho-Sorella \cite{PS}, Kumar \cite{K},  Galeati \cite{G} and references therein for more on pointwise uniqueness/nonuniqueness of ODEs associated to
nonsmooth vector fields. 
Lions-Seeger \cite{LS} developed theories of the transport equation and ODEs  associated to one-sided Lipschitz vector fields.   

The main focus of this paper is to analyze the behavior of co-moving volumes and Reynolds transport theorem  in DiPerna-Lions theory. 
At first glance,  unlike the inverse image of a Borel measurable set under $X$, the image $X(s,t,A)$ is not necessarily measurable\footnote{Throughout this paper, if we just say  ``measurable'', it means  Lebesgue measurable.}, regardless of the regularity class of $A$, and hence  it seems that we cannot formulate Reynolds transport theorem based on co-moving volumes defined as $X(s,t,A)$. 
Note that in the classical case  $X(s,t,A)$ is measurable for each Borel measurable set $A$, as $X(t,s,X(s,t,A))=A$ and $X(s,t,A)=X(t,s,\cdot)^{-1}(A)$ is measurable. 
Therefore, an easy approach is to deal with co-moving volumes given as the inverse image $X(s,t,\cdot)^{-1}(A)$ for Borel measurable sets $A$ and to formulate a Reynolds-type theorem.   
An alternative approach is to trim $A$ so that we can find more precise measure-theoretic properties of the image.    
Lemma \ref{FL-image} shows that {\it for each measurable set $A\subset\Omega$, there exists a null set $N_{A}^{s,t}\subset \Omega$ such that 
 $X(s,t,A\setminus N_{A}^{s,t})$ is a countable union of closed sets, and hence measurable}. 
Nevertheless, $X(s,t,A\setminus N_{A}^{s,t})$ could still exhibit a measure evolution that is not consistent with the limit of the classical case  obtained in the smooth approximation of $X$.  
Such difficulty is found at the points for which  \eqref{pre1-2} fails, i.e.,  
the null set 
 $$E^{s,t}:=\Omega\setminus\{x\in\Omega\,|\,  X(t,s, X(s,t,x))=x  \}.$$
For each  measurable set $A\subset\Omega$  and $s,t\in\R$, one can find a null set $N$ such that $A\setminus N$ does not contain any point of $E^{s,t}$ and $X(s,t,A\setminus N)$ is measurable, e.g., $N=E^{s,t}\cup N^{s,t}_{A\setminus E^{s,t}}$. In fact, $A\setminus E^{s,t}$ is measurable, and there exists a null set $N_{A\setminus E^{s,t}}^{s,t}\subset \Omega$ such that  $X\big(s,t,  (A\setminus E^{s,t})\setminus N_{A\setminus E^{s,t}}^{s,t} \big)= X\big(s,t,  A\setminus (E^{s,t}\cup  N_{A\setminus E^{s,t}}^{s,t})  \big)$ is measurable.     
With this trimming technique, we define ``co-moving volumes'' associated to $X$, where initial sets are assumed to be Borel measurable so that classical flow maps are applicable without any trimming.   
\medskip

\noindent{\bf Definition 1.}  {\it  Let $A\subset\Omega$ be Borel measurable and let $\dot{N}^{s,t}_{A}$ denote a null set such that $X(s,t,A\setminus \dot{N}^{s,t}_{A})$ is Lebesgue measurable and $A\setminus \dot{N}^{s,t}_{A}$ does not contain any point of $E^{s,t}$. 
We call the family of sets  $\{X(s,t,A\setminus \dot{N}^{s,t}_{A} )\}_{s\in\R}$  a regular co-moving volume starting from $t\in\R$.}

\medskip
 \noindent Although the trimming is not unique,  all such regular co-moving volumes with respect to $A$ have the same measure evolution that is characterized by the limit of smooth approximation $\{X^k\}_{k\in\N}$ of $X$, where each $X^k$ is a smooth flow map associated to the mollification of $v$ with zero-extension outside $\Omega$ (see the definition of $X^k$ stated with \eqref{LT-ODE} in Section 3).  
\begin{Thm}[Stability of regular co-moving volumes]\label{FL-preservative2}
Let  $A\subset\Omega$ be  Borel measurable  and  $\{X(s,t,A\setminus \dot{N}^{s,t}_{A} )\}_{s\in\R}$ be  a regular co-moving volume as defined in Definition 1. Then, for each $s\in\R$ and $f\in L^1(\R^3)$, it holds that 
\begin{align}\label{RTT1}
&\mbox{$\dis\meas(X(s,t,\cdot)^{-1}(A))=\lim_{k\to\infty} \meas(  X^k(s,t,\cdot)^{-1}(A))=\lim_{k\to\infty} \meas(X^k(t,s,A))$},\\\label{RTT2}  
&\mbox{$\dis \meas(X(s,t,A\setminus \dot{N}_A^{s,t}))=\lim_{k\to\infty} \meas(X^k(s,t, A))=\meas( X(t,s,\cdot)^{-1}( A))$},\\\label{RTT3} 
& \mbox{$\dis \int_{X(s,t,A\setminus \dot{N}_A^{s,t})} f(x)dx= \lim_{k\to\infty}  \int_{X^k(s,t,A)} f(x)dx$},\\\label{RTT4} 
& \mbox{$\dis \int_{X(s,t,\cdot)^{-1}(A)} f(x)dx= \lim_{k\to\infty}  \int_{X^k(s,t,\cdot)^{-1}(A)} f(x)dx= \lim_{k\to\infty}  \int_{X^k(t,s,A)} f(x)dx$},\\\label{RTT5}
&X(s,t,A\setminus \dot{N}^{s,t}_A)\setminus\p\Omega\subset X(t,s,\cdot)^{-1}(A).
\end{align}
\end{Thm}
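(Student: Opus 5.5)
The plan is to work from the DiPerna--Lions construction: $X$ is the $L^1_{\rm loc}$ (in $(s,x)$, locally uniformly in $t$) limit of the smooth flows $X^k$ associated to the mollified, zero-extended fields $v^k$. Two properties of these flows will be used throughout.

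First, since $X^k(s,t,\cdot)$ is a diffeomorphism with Jacobian $\exp\int_t^s(\nabla\cdot v^k)(r,X^k(r,t,x))dr$, the classical change of variables gives uniform-in-$k$ bounds $\frac{1}{c^{s,t}_k}|B|\le |X^k(s,t,\cdot)^{-1}(B)|\le c^{s,t}_k|B|$, with $c^{s,t}_k\to c^{s,t}$.

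Second, the push-forwards converge: $(X^k(s,t,\cdot))_\#(\mathbf 1_\Omega dx)\wto (X(s,t,\cdot))_\#(\mathbf 1_\Omega dx)$. By the uniform density bounds, both measures have densities $\rho^k,\rho\in L^\infty$. This is the renormalized transport/continuity equation: $\rho^k(s,\cdot)$ solves $\p_s\rho+\nabla\cdot(\rho v^k)=0$ with $\rho(t)=\mathbf 1_\Omega$, and DiPerna--Lions stability gives $\rho^k(s)\to\rho(s)$ strongly in $L^1$.

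Equations \eqref{RTT1} and \eqref{RTT4} then follow directly.
\begin{itemize}
\item We have $\meas(X^k(s,t,\cdot)^{-1}(A))=\int_A\rho^k(s,y)\,dy\to\int_A\rho(s,y)\,dy=\meas(X(s,t,\cdot)^{-1}(A))$. Strong $L^1$ convergence is what allows an arbitrary Borel set $A$, not just sets with null boundary.
\item The identity $X^k(s,t,\cdot)^{-1}(A)=X^k(t,s,A)$ holds by the group property.
\item For \eqref{RTT4} I would rewrite $\int_{X^k(s,t,\cdot)^{-1}(A)}f=\int_\Omega f\,\mathbf 1_A(X^k(s,t,x))\,dx$. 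This equals $\int \mathbf 1_A\, d\big(X^k(s,t,\cdot)_\#(f\,dx)\big)$, and the push-forward of $f\,dx$ converges strongly in $L^1$ by the same linear-transport stability argument with initial datum $f$.
\end{itemize}

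Equation \eqref{RTT5} is elementary. Take $x\in A\setminus\dot N^{s,t}_A$, so $x\notin E^{s,t}$ and hence $X(t,s,X(s,t,x))=x\in A$. Then $y=X(s,t,x)$ satisfies $X(t,s,y)\in A$, so $y\in X(t,s,\cdot)^{-1}(A)$ as long as $y\in\Omega$ (where $X(t,s,\cdot)$ is defined). This gives $X(s,t,A\setminus\dot N_A^{s,t})\setminus\p\Omega\subset X(t,s,\cdot)^{-1}(A)$. Since $\meas(\p\Omega)=0$ by the assumption $\meas(\Omega)=\meas(\overline\Omega)$, we get $\meas(X(s,t,A\setminus\dot N))\le \meas(X(t,s,\cdot)^{-1}(A))$. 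The middle equality of \eqref{RTT2}, namely $\lim_k\meas(X^k(s,t,A))=\meas(X(t,s,\cdot)^{-1}(A))$, is just \eqref{RTT1} with $s,t$ swapped.

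The main obstacle is the reverse inequality $\meas(X(s,t,A\setminus\dot N))\ge\meas(X(t,s,\cdot)^{-1}(A))$. Write $B:=X(t,s,\cdot)^{-1}(A)$. I want to show that a.e. $y\in B$ lies in $X(s,t,A\setminus\dot N)$. By \eqref{pre1-2} with roles exchanged, $X(s,t,X(t,s,y))=y$ for a.e. $y$, and for such $y\in B$ we have $x:=X(t,s,y)\in A$ with $X(s,t,x)=y$. It remains to ensure that $x\notin\dot N$. The set of bad $y$ is contained in $X(t,s,\cdot)^{-1}(\dot N)$, which is null by \eqref{pre3-3}. Hence $B\setminus(\text{null set})\subset X(s,t,A\setminus\dot N)$, and equality of measures follows. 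Note that $\dot N$ must be a measurable null set (or be contained in a Borel null set) for \eqref{pre3-3} to apply, so I would first enlarge it to a Borel null $G_\delta$ set. This is harmless, because passing to a subset of $A\setminus\dot N$ still leaves the inclusion $B\setminus\text{null}\subset X(s,t,A\setminus \dot N)$ intact.

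This argument in fact shows that $X(s,t,A\setminus\dot N)$ and $B\setminus\p\Omega$ differ by a null set. Therefore \eqref{RTT3} reduces to \eqref{RTT4} with $s,t$ exchanged: $\int_{X(s,t,A\setminus\dot N)}f=\int_{X(t,s,\cdot)^{-1}(A)}f=\lim_k\int_{X^k(s,t,A)}f$.
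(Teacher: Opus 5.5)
Your proof of the hard half of \eqref{RTT2} is correct and takes a different route from the paper. The paper gets the lower bound $\meas(X(s,t,A\setminus\dot N^{s,t}_A))\ge\lim_k\meas(X^k(s,t,A))$ from assertion 5 of Lemma \ref{FL-image}. That lemma uses Egorov's theorem, closed sets $A_\delta$ on which $X^{a_k}(s,t,\cdot)\to X(s,t,\cdot)$ uniformly, and open sets $O_\ep$ trapping $X^{a_k}(s,t,A_\delta)$. The paper then proves \eqref{RTT3} by a three-step Egorov/subsequence argument and only afterwards deduces \eqref{RTT4}.

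You instead use the a.e.\ inverse identity in the reverse direction, together with \eqref{pre3-3} applied to a Borel null hull $\hat N\supset\dot N^{s,t}_A$. With $B=X(t,s,\cdot)^{-1}(A)$ and a null set $Z$, this gives $B\setminus Z\subset X(s,t,A\setminus\dot N^{s,t}_A)\subset B\cup\p\Omega$. This is shorter and needs no Egorov argument. It also reverses the logical order: \eqref{RTT4} first, \eqref{RTT3} as a corollary. By completeness of Lebesgue measure, it even shows that the measurability requirement in Definition 1 is automatic once $A\setminus\dot N^{s,t}_A$ avoids $E^{s,t}$. What the paper's route gives in addition is convergence of the approximate images $X^{a_k}(s,t,A_\delta)$ onto the trimmed image. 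The author uses this to interpret the trimming as selected by smooth approximation, but the statement itself does not need it.

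There is, however, a missing step in your derivation of \eqref{RTT1} and \eqref{RTT4}, on which everything downstream rests, including your reduction of \eqref{RTT3}. The density $\rho^k$ of $X^k(s,t,\cdot)_\#(\chi_\Omega\,dx)$ gives $\int_A\rho^k\,dy=\meas(\{x\in\Omega\,|\,X^k(s,t,x)\in A\})$. But the set $X^k(s,t,\cdot)^{-1}(A)$ that you identify with $X^k(t,s,A)$ is the full preimage in $K$. Since $v^k$ is supported in an $\ep_k$-neighbourhood of $\overline\Omega$, trajectories leak out of $\Omega$, and $X^k(t,s,A)$ generally contains points of $K\setminus\Omega$. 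So your two identities are incompatible as written.

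You must show $\meas(X^k(t,s,A)\setminus\Omega)\to0$, and for \eqref{RTT4} also $\int_{X^k(t,s,A)\setminus\Omega}f\,dx\to0$; the latter follows from the former by absolute continuity of $\int|f|$. This is the second place where the paper uses $\meas(\overline\Omega)=\meas(\Omega)$. Choose an open $\Omega^\ep\supset\overline\Omega$ with $\meas(\Omega^\ep)<\meas(\Omega)+\ep$. For large $k$, $v^k$ vanishes outside a closed subset of $\Omega^\ep$ containing $\overline\Omega$, so trajectories starting in $A$ never leave $\Omega^\ep$. Hence $\meas(X^k(t,s,A)\setminus\Omega)<\ep$.

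Finally, your remark that strong $L^1$ convergence of the densities ``is what allows an arbitrary Borel set $A$'' is not accurate. The strong stability of the continuity equation is a black box that the paper does not prove in its bounded-domain setting; it only proves stability of compositions, in Lemma \ref{FL-product}. It is also unnecessary. The uniform bound $0\le\rho^k\le c$ and narrow convergence of the push-forwards, which follows from $X^k(s,t,\cdot)\to X(s,t,\cdot)$ in $L^2(\Omega)$, give $\rho^k\to\rho$ weakly-$*$ in $L^\infty$, which already suffices to test against $\chi_A$. For $f\,dx$, the pushed densities are uniformly integrable because $\meas(X^k(t,s,E))\le c\meas(E)$. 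Alternatively, Lemma \ref{FL-product} with $w=\chi_A$ gives $\chi_A\circ X^k(s,t,\cdot)\to\chi_A\circ X(s,t,\cdot)$ in $L^1(\Omega)$. The $\Omega$-part of \eqref{RTT4} then follows by dominated convergence along subsequences.
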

\noindent Furthermore, Reynolds transport theorem holds as follows: 
\begin{Thm}[Reynolds transport theorem in DiPerna-Lions theory]\label{main} 
Let $A\subset\Omega$ be Borel measurable and $\{X(s,t,A\setminus \dot{N}^{s,t}_{A} )\}_{s\in\R}$ be a regular co-moving volume as defined in Definition 1. 
Then, for each  $s\in\R$ and $C^1$-smooth function $g:\R\times \R^3\to\R$, it holds that
\begin{align}\label{trans2}
&\int_{X(s,t,A\setminus \dot{N}_A^{s,t})}\!\!\! g(s,x)dx=\int_{A}g(t,x)dx+\int_t^s\int_{X(r,t,A\setminus \dot{N}_A^{r,t})}\left\{\frac{\partial g}{\partial r}(r,x)+\nabla\cdot\Big(g(r,x)v(r,x)\Big)\right\}dxdr,\\\label{trans3}
&\int_{X(s,t,\cdot)^{-1}(A)}\!\!\! g(t,x)dx=\int_{A}g(s,x)dx+\int_s^t\int_{X(s,r,\cdot)^{-1}(A)}\left\{\frac{\partial g}{\partial r}(r,x)+\nabla\cdot\Big(g(r,x)v(r,x)\Big)\right\}dxdr,\\\label{trans1}
&\meas(X(s,t,A\setminus \dot{N}_A^{s,t})) =\meas(A) +\int_t^s\int_{X(r,t,A\setminus \dot{N}_A^{r,t})} \nabla\cdot v(r,x)dxdr,\\
\label{trans0}
&\meas(X(s,t,\cdot)^{-1}( A))=\meas(A)+\int_s^t\int_{X(s,r,\cdot)^{-1}(A)} \nabla\cdot v(r,x)dxdr. 
\end{align}
\end{Thm}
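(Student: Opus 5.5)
The plan is to prove the formulas first for the smooth approximations $X^k$ and then pass to the limit using Theorem \ref{FL-preservative2}. Let $v^k$ be the mollified, zero-extended field defining $X^k$. For each $k$ the classical Reynolds transport theorem on $\R^3$ gives
\begin{align*}
\int_{X^k(s,t,A)} g(s,x)dx=\int_A g(t,x)dx+\int_t^s\int_{X^k(r,t,A)}\Big\{\p_r g(r,x)+\nabla\cdot\big(g(r,x)v^k(r,x)\big)\Big\}dxdr .
\end{align*}
This follows from the change of variables $x=X^k(r,t,\xi)$, with Jacobian $J^k(r,\xi)=\exp\int_t^r(\nabla\cdot v^k)(\tau,X^k(\tau,t,\xi))d\tau$ (Liouville), followed by differentiation in $r$. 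The same argument applied backward in time, with $X^k(s,r,\cdot)^{-1}(A)=X^k(r,s,A)$, gives the smooth analogue of \eqref{trans3}. Formulas \eqref{trans1} and \eqref{trans0} are then the special case $g\equiv1$. We may localize $g$ to a compact set containing $\overline\Omega$ by a cutoff, since all the sets involved lie in $\overline{\Omega}$ for the trimmed sets and in a small neighbourhood of $\overline\Omega$ for the $X^k$-images.

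We then let $k\to\infty$ term by term with $s,t$ fixed. The left side converges to $\int_{X(s,t,A\setminus\dot N_A^{s,t})}g(s,x)dx$ by \eqref{RTT3} with $f=g(s,\cdot)\chi_{K}$, where $K$ is the compact set above. For the double integral, write $\nabla\cdot(gv^k)=\nabla g\cdot v^k+g\,\nabla\cdot v^k$. Fix $r$. We have $\chi_{X^k(r,t,A)}\to\chi_{X(r,t,A\setminus\dot N_A^{r,t})}$ in the sense of \eqref{RTT3}, i.e.\ weakly-$*$ in $L^\infty$ against $L^1$ functions. By \eqref{RTT2} the measures also converge, so the convergence of these characteristic functions is in fact strong in $L^1$: weak convergence together with convergence of the $L^2$ norms gives strong convergence. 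Meanwhile $v^k(r,\cdot)\to v(r,\cdot)$ and $\nabla\cdot v^k(r,\cdot)\to\nabla\cdot v(r,\cdot)$ in $L^1(\R^3)$ for a.e. $r$; the zero extension of $v(r,\cdot)\in W_0^{1,1}(\Omega)$ lies in $W^{1,1}(\R^3)$, so its divergence has no boundary part.

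The product of a bounded, strongly $L^1$-convergent sequence of characteristic functions with an $L^1$-convergent sequence converges in $L^1$. Hence the inner integrals converge for a.e. $r$. Dominated convergence in $r$ applies because the inner integrals are bounded by $\norm g\norm_{C^1(K)}\big(\norm v^k(r)\norm_{W^{1,1}}\big)$, and after passing to a subsequence these bounds are dominated by an integrable function of $r$ (or one uses Vitali, since $v^k\to v$ in $L^1_{t}W^{1,1}_x$). This yields \eqref{trans2}. For \eqref{trans3} the same argument applies using \eqref{RTT4} and \eqref{RTT1}, which identify the limits of $\chi_{X^k(r,s,A)}$ with $\chi_{X(s,r,\cdot)^{-1}(A)}$, again strongly in $L^1$.

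The main obstacle is this limit passage inside the time integral. Theorem \ref{FL-preservative2} is stated for each fixed $r$ with the $r$-dependent trimming $\dot N_A^{r,t}$, so we need pointwise-in-$r$ convergence of $\chi_{X^k(r,t,A)}$ in $L^1$, not merely weak convergence. We also need the measurability in $r$ of $r\mapsto\int_{X(r,t,A\setminus\dot N_A^{r,t})}h(r,x)dx$ so that the right side makes sense. I would handle the strong convergence via the norm identity $\meas(X^k(r,t,A))\to\meas(X(r,t,A\setminus\dot N^{r,t}_A))$ from \eqref{RTT2}, as above. I would handle measurability by noting that this function is the a.e. pointwise limit in $r$ of the continuous functions $r\mapsto\int_{X^k(r,t,A)}h^k\,dx$, hence measurable, and independent of the choice of trimming by \eqref{RTT3}.
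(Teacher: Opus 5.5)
Your argument is correct and follows the paper's overall scheme: classical Reynolds transport for $X^k$, fixed-$r$ limits via Theorem \ref{FL-preservative2}, then dominated convergence in $r$. The difference is in how you handle the $k$-dependent integrand $\nabla\cdot(gv^k)$.

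The paper adds and subtracts $\nabla\cdot(gv)$ in \eqref{rei}. The remainder is bounded by $\|\nabla\cdot(g(v^k-v))\|_{L^1(I^{s,t}\times K)}\to0$, uniformly over the domains $X^k(r,t,A)$. The main term then has the fixed integrand $h(r,\cdot)=\p_r g(r,\cdot)+\nabla\cdot(g(r,\cdot)v(r,\cdot))$, so \eqref{RTT3} applies verbatim and $\|h(r,\cdot)\|_{L^1(K)}$ is an immediate dominating function.

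You instead upgrade to strong $L^1$ convergence of $\chi_{X^k(r,t,A)}$ using \eqref{RTT2}. This step is valid: with $X_r:=X(r,t,A\setminus\dot N_A^{r,t})$, one has $\meas(X^k(r,t,A)\triangle X_r)=\meas(X^k(r,t,A))+\meas(X_r)-2\int_{X^k(r,t,A)}\chi_{X_r}dx\to0$. However, it then forces you to use $v^k(r,\cdot)\to v(r,\cdot)$ in $W^{1,1}$ for a.e. $r$, plus a subsequence or Vitali argument for domination.

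The upgrade is not needed. Write $\chi_kf_k-\chi f=\chi_k(f_k-f)+(\chi_k-\chi)f$. The second piece needs only the weak-$*$ convergence in \eqref{RTT3}. Integrating the first piece in $r$ before letting $k\to\infty$ is exactly the paper's split, and it avoids pointwise-in-$r$ convergence of $v^k$ altogether.

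If you keep your route, note that a.e.-$r$ convergence of the space-time mollifications along the full sequence needs a Lebesgue-point argument. It is simpler to pass to a subsequence with $\sum_k\|v^k-v\|_{L^1(I^{s,t};W^{1,1}(K))}<\infty$. This is harmless, since the remaining terms of the identity converge along the full sequence.

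What your route buys is a stronger byproduct that the paper does not state: $\meas\big(X^k(r,t,A)\triangle X(r,t,A\setminus\dot N_A^{r,t})\big)\to0$ for every $r$. Analogously, $\meas\big(X^k(r,s,A)\triangle X(s,r,\cdot)^{-1}(A)\big)\to0$ via \eqref{RTT1} and \eqref{RTT4}.
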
 
\medskip

\noindent {\bf Remark.} {\it 
The formulas \eqref{trans2} to \eqref{trans0} arise as limits of classical Reynolds transport theorem and Liouville theorem with respect to the set $A$ (without any trimming) under the smooth flows $X^k$ associated to the mollification of  $v$; in particular, due to Theorem \ref{FL-preservative2}, each term of the formulas \eqref{trans2} and \eqref{trans1} is independent of the choice of trimming $ \dot{N}_A^{r,t}$; the a.e.-differential forms of  \eqref{trans2} to \eqref{trans0} are also available. }
\medskip
    
 A novelty of Theorem \ref{FL-preservative2} and Theorem \ref{main}  is  the formulation of measure evolution and Reynolds transport theorem in terms of the image of measurable sets under generalized flow maps.  
 While existing DiPerna-Lions-Ambrosio theory naturally yields transport identities for the inverse image, this does not recover the physically natural co-moving volume formulation used in continuum mechanics, particularly when the evolving image of a measurable set may fail to remain measurable.  
 The notion of regular co-moving volume introduced here is designed precisely to overcome this obstruction. 
More specifically, we identify a suitable null-set trimming mechanism that allows one to obtain stability in the co-moving volume formulation.   
This trimming is not merely a technical requirement, but rather reflects a genuine geometric feature of generalized flows inherited from smooth approximation. 

    Our proofs of  Theorem \ref{FL-preservative2} and Theorem \ref{main}  rely essentially on the fact that a sequence of approximate flows converges strongly to $X$ in $L^1$, and that this sequence contains an a.e. pointwise convergent subsequence, where Egorov's theorem, applied to this subsequence, is crucial in the argument; the main technical challenge is to recover convergence of the whole sequence.    
Such $L^1$-convergence of approximate flows has been well-established in the whole space setting by DiPerna-Lions \cite{DL} through the uniqueness and renormalization property of weak solutions of the linear transport equation. 
In the bounded domain setting,  compactness arguments can be more elementary. 
However, mollification arguments require extension of functions outside the original domain, which causes leakage of trajectories; this issue must be handled carefully.  
To keep the paper reasonably self-contained and to clarify the highly technical and sometimes delicate aspects of DiPerna-Lions theory, we will prove  Theorem \ref{FL-preservative2} and Theorem \ref{main}  together with Theorem \ref{Thm-DL}  in  full detail.  

We conclude the introduction by referring to a recent work by Bothe-K\"ohne \cite{BK} (see also Bothe \cite{B1}) on an alternative generalization of  Reynolds transport theorem for a flow map generated by a discontinuous vector field arising from a two-phase flow with a sharp interface and phase change. 
Even though the velocity field is assumed to be  jointly continuous and locally  Lipschitz in the spatial variable in each phase, discontinuity across the interface requires the the notion of differential inclusions. If a co-moving volume intersects the interface, a surface integral remains in the transport formula due to discontinuity across the interface. 

\setcounter{section}{1}
\setcounter{equation}{0}
\section{Classical theory of ODEs and the  linear transport equation}

This section is devoted to an overview of the classical theory of ODEs and linear transport equation, which will be applied to the problem arising from smooth approximation of $v$ introduced in \eqref{v}.  

 Let $K\subset \R^3$ be a bounded connected open set  and $w:\R\times\R^3\to\R^3$ be $C^1$-smooth satisfying $w(t,x)|_{x\in \R^3\setminus K}\equiv0$, where $K$ is later taken so that $K\supset \overline{\Omega}$, and the mollification of the zero extension of $v$ will serve as such a vector field 
$w$. 
Consider the ODE 
\begin{align}\label{1ODE}
x'(s)=w(s,x(s)),\,\,\,\quad x(t)=\xi, 
\end{align}  
where $(t,\xi)\in\R\times\R^3$ is arbitrary. 
Since $w|_{\R^3\setminus K}\equiv0$, $x(\cdot)$ stays in $\R^3\setminus K$ and remains stationary if $\xi\in\R^3\setminus K$. Hence,  $x(\cdot)$ stays in $K$ if $\xi\in K$, which implies that \eqref{1ODE} has a global solution for every initial condition. 
The flow map $X:\R\times\R\times\R^3\to\R^3$, $X(s,t,\xi):=x(s)$ of \eqref{1ODE} is then well-defined to be $C^1$-smooth in all variables.  
In particular, $X(s,t,\cdot): K\to K$ is $C^1$-diffeomorphic,  $X(s,t,\p K)=\p K$ and $X(s,t,\cdot)^{-1}=X(t,s,\cdot)$ for each $t,s\in\R$. 
Our main interest is the restriction of $X$ on $K$. 
 
Next, consider the linear transport equation parametrized by initial time $s\in\R$:  
\begin{align}\label{1LT}
\begin{cases}
\p_t \rho(s,t,x)+w(t,x)\cdot\nabla \rho(s,t,x)=0&\mbox{\quad for $(t,x)\in\R\times K$},\\
\rho(s,s,x)=\rho_0(x)&\mbox{\quad for $x\in K$}, 
\end{cases}
\end{align}
where $\rho_0$ is a $C^1$-smooth function and $\nabla=\frac{\p}{\p x}$. 
The following remark would be helpful: {\it the active time $s$ (resp. initial time $t$) for the ODE  \eqref{1ODE} corresponds to the initial time (resp. active time) for the PDE \eqref{1LT}.}
\begin{Prop}\label{classical1}
Define $\rho: \R\times\R\times K\to\R$ as  
$\rho(s,t,x):=\rho_0 (X(s,t,x))$. Then, $\rho(s,\cdot,\cdot)$ is the unique $C^1$-solution of \eqref{1LT} for each $s \in\R$. 
\end{Prop}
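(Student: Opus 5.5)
We prove existence (that $\rho$ is $C^1$ and solves \eqref{1LT}) by the group property of $X$, and uniqueness by constancy along characteristics.

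\emph{Existence.} Since $X$ is $C^1$ in all variables and $\rho_0$ is $C^1$, the map $\rho(s,\cdot,\cdot)=\rho_0(X(s,\cdot,\cdot))$ is $C^1$ on $\R\times K$. Moreover $\rho(s,s,x)=\rho_0(X(s,s,x))=\rho_0(x)$. For the equation, fix $(t,x)\in\R\times K$ and set $y(\tau):=X(\tau,t,x)$, which is the solution of \eqref{1ODE} with $y(t)=x$. By the flow property $X(s,\tau,X(\tau,t,x))=X(s,t,x)$, the function $\tau\mapsto\rho(s,\tau,y(\tau))=\rho_0(X(s,t,x))$ is constant in $\tau$. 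Differentiating at $\tau=t$ with the chain rule gives
$$\p_t\rho(s,t,x)+y'(t)\cdot\nabla\rho(s,t,x)=0,\qquad y'(t)=w(t,x).$$
This is exactly \eqref{1LT}. Equivalently, differentiating the identity $X(s,\tau,X(\tau,t,x))=X(s,t,x)$ in $\tau$ at $\tau=t$ gives $\p_tX(s,t,x)+D_xX(s,t,x)\,w(t,x)=0$, and composing with $\nabla\rho_0$ yields the same equation. Since $X(\tau,t,\cdot)$ maps $K$ into $K$, everything stays in the domain.

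\emph{Uniqueness.} Let $\tilde\rho$ be any $C^1$-solution of \eqref{1LT} on $\R\times K$ with the same initial datum. Fix $(t,x)\in\R\times K$ and consider $\phi(\tau):=\tilde\rho(\tau,X(\tau,s,z))$ with $z:=X(s,t,x)\in K$. The characteristic $X(\tau,s,z)$ stays in $K$ for all $\tau$ and is $C^1$ in $\tau$. The chain rule and the PDE give
$$\phi'(\tau)=\p_t\tilde\rho+w(\tau,X(\tau,s,z))\cdot\nabla\tilde\rho=0,$$
so $\phi$ is constant. At $\tau=s$, $\phi(s)=\tilde\rho(s,z)=\rho_0(X(s,t,x))$. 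At $\tau=t$, the group property gives $X(t,s,X(s,t,x))=x$, so $\phi(t)=\tilde\rho(t,x)$. Hence $\tilde\rho(t,x)=\rho(s,t,x)$.

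\emph{Main point of care.} The only delicate issue is that the domain is $K$ rather than $\R^3$. We must check that characteristics starting in $K$ never leave $K$, so that the PDE can be evaluated along them. This is guaranteed by the remark preceding the proposition: $w\equiv0$ outside $K$ forces $X(s,t,\cdot):K\to K$ to be a diffeomorphism with $X(s,t,\cdot)^{-1}=X(t,s,\cdot)$. Once this is invoked, both directions are routine chain-rule computations.
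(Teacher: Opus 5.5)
Your proof is correct. The existence half is essentially the paper's. The paper differentiates $X(t,s,X(s,t,x))=x$ in $x$ and in $t$ to get $\p_tX(s,t,x)=-\p_\xi X(s,t,x)\,w(t,x)$, then applies the chain rule to $\rho_0\circ X$. That is the same identity you obtain by differentiating $X(s,\tau,X(\tau,t,x))=X(s,t,x)$ at $\tau=t$.

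The uniqueness half takes a genuinely different route. The paper sets $u=\rho-\tilde\rho$, multiplies the equation by $u$, and integrates over $K$ to get $\frac{d}{dt}\|u(s,t,\cdot)\|_{L^2(K)}^2=\int_K u^2\,\nabla\cdot w\,dx$, using $w=0$ on $\p K$ to kill the boundary term. It then concludes with Gronwall. You instead show that any $C^1$ solution is constant along the characteristic $\tau\mapsto X(\tau,s,X(s,t,x))$, and use the group property to connect $(s,X(s,t,x))$ to $(t,x)$.

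Your argument is pointwise and more elementary. It needs no square integrability of $\tilde\rho$ on $K$ and no integration by parts up to $\p K$. Those are not automatic for an arbitrary $C^1$ function on the open set $K$, and the paper passes over them.

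What the paper's energy argument buys is that it never uses the flow map. It is the smooth prototype of Lemma~\ref{LT-norm} and Proposition~\ref{LT-unique}. There $v$ is only Sobolev, so characteristics are not available a priori, and uniqueness must come from an $L^2$ identity plus Gronwall, with the commutator estimate of Lemma~\ref{LT-commu} replacing the exact computation. Your route relies precisely on the classical flow, which DiPerna--Lions theory has to construct from the transport equation rather than assume. So it would not carry over to that setting, even though it is the cleaner proof of this classical proposition.
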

\begin{proof}
It follows from $X(t,s,X(s,t,x))=x$ that 
\begin{align*}
&\p_\xi X(s,t,x)=\p_\xi X(t,s,X(s,t,x))^{-1},\quad 
\p_t X(s,t,x)=-(\p_\xi X(t,s,X(s,t,x)))^{-1} w(t,x), 
\end{align*}
from which we obtain $\p_t\rho(s,t,x)+w(t,x)\cdot \nabla\rho(s,t,x)=0$ for all $(t,x)\in\R\times K$. 
The initial condition is satisfied as $\rho(s,s,x)=\rho_0(X(s,s,x))=\rho_0(x)$. 

Let  $\rho,\tilde{\rho}$ be two smooth solutions. Then, $u:=\rho-\tilde{\rho}$ is a solution of \eqref{1LT} with $\rho_0=0$. 
Multiplying  $\p_t u(s,t,x)+w(t,x)\cdot \nabla u(s,t,x)=0$ by $u$ and integrating it over $K$, we find that 
\begin{align}\label{uru}
\frac{\p}{\p t}\norm u(s,t,\cdot)\norm_{L^2(K)}^2=\int_Ku(t,x)^2 (\nabla\cdot w(t,x))dx\le \max_{x\in \overline{K}}|\nabla\cdot w(t,x)|\times\norm u(s,t,\cdot)\norm_{L^2(K)}^2
\end{align} 
with $\norm u(s,s,\cdot) \norm_{L^2(K)}^2=0$. 
Gronwall's inequality implies that $ \norm u(s,t,\cdot) \norm_{L^2(K)}^2=0$ for all $t\in\R$. 
\end{proof}
If $\rho_0(x)=x_i$ ($i=1,2,3$), the solution of \eqref{1LT} is the $i$-th component of $X(s,t,x)$.  
Hence, the flow map of the ODE can be obtained from solutions to the linear transport equation. This is the core idea of DiPerna-Lions theory.  
\begin{Prop}\label{classical2}
For each $f\in L^1(K)$ and all $s,t\in\R$, it holds that  
$$\int_K f(X(s,t,x))dx=\int_K f(x)dx+\int_s^t \int_Kf(X(s,r,x))(\nabla \cdot w(r,x))dxdr.$$
\end{Prop}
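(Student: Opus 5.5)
We prove the identity first for $f\in C^1_c(K)$ (or $f$ continuous on $\overline K$) and then pass to general $f\in L^1(K)$ by density. Write $\Phi(t):=\int_K f(X(s,t,x))\,dx$ for fixed $s$. By Proposition \ref{classical1} applied with $\rho_0=f$, the function $\rho(s,t,x)=f(X(s,t,x))$ is a $C^1$-solution of \eqref{1LT}. Differentiating under the integral sign, which is allowed since everything is $C^1$ on the compact set $\overline{K}$ in $(t,x)$ for $t$ in a bounded interval, gives
\begin{align*}
\Phi'(t)=\int_K \p_t\rho(s,t,x)\,dx=-\int_K w(t,x)\cdot\nabla\rho(s,t,x)\,dx=\int_K \rho(s,t,x)\,\nabla\cdot w(t,x)\,dx .
\end{align*}
The last equality is integration by parts with no boundary term, because $w(t,\cdot)$ vanishes outside $K$ and hence on $\p K$ by continuity. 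Equivalently, we may integrate $\nabla\cdot(\rho w)$ over a large ball containing $\overline K$ on which $w$ has compact support. Integrating $\Phi'$ from $s$ to $t$ and using $\Phi(s)=\int_K f\,dx$, since $X(s,s,x)=x$, yields the formula for smooth $f$.

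As an alternative route for smooth $f$, one can change variables $y=X(s,t,x)$, i.e.\ $x=X(t,s,y)$. This uses that $X(s,t,\cdot)$ is a $C^1$-diffeomorphism of $K$, and it gives $\Phi(t)=\int_K f(y)\,J(t,y)\,dy$ with $J=\det \p_\xi X(t,s,y)$. Liouville's formula $\p_t J(t,y)=(\nabla\cdot w)(t,X(t,s,y))\,J(t,y)$ then leads to the same identity after changing variables back. I will use whichever is cleaner; the transport-equation route matches the paper's philosophy.

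For general $f\in L^1(K)$, take $f_m\in C_c^\infty(K)$ with $f_m\to f$ in $L^1(K)$. The key estimate is
\[
\int_K|h(X(s,r,x))|\,dx\le C\int_K|h|\,dx,
\]
uniformly for $r$ in the bounded interval $I^{s,t}$. This follows from the change of variables above: the Jacobian $\det\p_\xi X(r,s,\cdot)$ is bounded by $e^{\int|\nabla\cdot w|}\le e^{|t-s|\max|\nabla\cdot w|}$. It also ensures that $f\circ X(s,r,\cdot)$ is well defined a.e.\ and measurable, since the map is a diffeomorphism and preserves null sets. With this bound, the left-hand sides converge, and so does the double integral: $\nabla\cdot w$ is bounded on $[s,t]\times\overline K$, and $(r,x)\mapsto f_m(X(s,r,x))$ converges in $L^1(I^{s,t}\times K)$, where joint measurability follows from continuity of $X$. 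Passing to the limit gives the identity for all $f\in L^1(K)$.

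The main obstacle is only this density/measurability step. The smooth computation is routine, while extending to $L^1$ requires the uniform bound on the composition operator $h\mapsto h\circ X(s,r,\cdot)$, obtained from the Jacobian bound via Gronwall or Liouville.
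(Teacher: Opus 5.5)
Your proof is correct, but your main route differs from the paper's.

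\textbf{The paper's route.} The paper never passes through smooth $f$. It takes $f\in L^1(K)$ from the start and uses the group property to write
\[
F(s,t+h)=\int_K f(X(s,t,X(t,t+h,x)))\,dx=\int_K f(X(s,t,y))\,|\det\p_\xi X(t+h,t,y)|\,dy .
\]
The short-time expansion $\det\p_\xi X(t+h,t,y)=1+h\nabla\cdot w(t,y)+o(h)$ then gives directly
\[
\p_tF(s,t)=\int_K f(X(s,t,y))\,\nabla\cdot w(t,y)\,dy,
\]
which is continuous in $t$, and the fundamental theorem of calculus finishes the proof.

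\textbf{Your route.} You get the derivative from the transport equation of Proposition~\ref{classical1} plus an integration by parts, and then extend to $L^1$ by density using the Jacobian bound. For a general bounded open $K$, the integration by parts needs one more word: $\nabla\cdot w=0$ a.e.\ on $\R^3\setminus K$, so that the large-ball integral reduces to $K$. This is harmless when $K$ is a ball, as in the application.

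\textbf{Your alternative route already handles $L^1$.} It is essentially the paper's argument, and it works for every $f\in L^1(K)$, not just smooth $f$. Since $\Phi(t)=\int_K f(y)J(t,y)\,dy$ and $\p_tJ=(\nabla\cdot w)(t,X(t,s,y))J$ is bounded on compact time intervals, you can differentiate under the integral by dominated convergence. That makes the density step and the measurability bookkeeping unnecessary. Your remark that joint measurability ``follows from continuity of $X$'' also implicitly needs a Borel representative of $f$, which is another reason to skip that step.

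\textbf{What each approach buys.} Your PDE route mirrors the energy identity \eqref{uru} and the structure later used for weak solutions in Lemma~\ref{LT-norm}. It does not avoid Jacobian estimates, though: you still need $\det\p_\xi X\le e^{|t-s|\max|\nabla\cdot w|}$ to pass to $L^1$. The paper's change-of-variables argument uses exactly that Jacobian information and gets the $L^1$ statement in one step.
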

\begin{proof} 
Set $F(s,t):=\int_K f(X(s,t,x))dx$, where $|\det \p_\xi X(s,t,\xi)|$ is continuous and   $\int_K f(X(s,t,x))dx=\int_Kf(y)|\det \p_\xi X(t,s,y)|dy$ is  finite for all $s,t\in\R$. 
Then, since $\det\p_\xi X(t+h,t,y)=1+h\nabla\cdot w(t,y)+o(h)$ for all $|h|\ll1$, we obtain  
\begin{align*}
F(s,t+h)=&\int_K f(X(s,t,X(t,t+h,x)))dx
=\int_K f(X(s,t,y))|\det\p_\xi X(t+h,t,y) |dy\\
=&\int_K f(X(s,t,y)) dy +h\int_K f(X(s,t,y))(\nabla \cdot w(t,y))dy +o(h),
\end{align*}
which leads to $\p_tF(s,t)=\int_K f(X(s,t,y))(\nabla \cdot w(t,y))dy$, where the right hand side is continuous with respect to $t$.  
Since $F(s,s)=\int_K f(x)dx$, we  conclude the assertion.  
\end{proof}
\begin{Prop}[Classical Reynolds transport theorem]\label{classical3}
\label{Rey}
Let $A\subset K$ be Borel measurable.  Then, for every $C^1$-smooth function $g:\R\times \overline{K}\to\R$ and all $s,t\in\R$, the following identities hold: 
\begin{align}\label{usi1}
&\int_{X(s,t,A)}g(s,x)dx=\int_{A}g(t,x)dx+\int_t^s\int_{X(r,t,A)}\left\{\frac{\partial g}{\partial s}(r,x)+\nabla\cdot\Big(g(r,x)w(r,x)\Big)\right\}dxdr,\\\label{usi2}
&\meas(X(s,t,A))=\meas(A)+\int_t^s\int_{X(r,t,A)}\nabla\cdot w(r,x)dxdr\quad \mbox{(Liouville theorem)}.
\end{align}
\end{Prop}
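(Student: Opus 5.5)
The plan is to reduce both identities to Proposition \ref{classical2} by the change-of-variables identity $X(s,t,A)=X(t,s,\cdot)^{-1}(A)$, which holds in the classical setting because $X(s,t,\cdot)^{-1}=X(t,s,\cdot)$ on $K$. For a Borel set $A\subset K$ and a bounded Borel function $h$ on $K$ we have $\int_{X(s,t,A)}h(x)\,dx=\int_K \chi_A(X(t,s,x))h(x)\,dx$, so every integral over a co-moving volume becomes an integral over $K$ of a composition with the flow.

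\textbf{Liouville formula.} I would first prove \eqref{usi2}. Apply Proposition \ref{classical2} with $f=\chi_A\in L^1(K)$ (recall $K$ is bounded) and with the roles of the time variables arranged so that the active time is $t$ and the variable time is $s$. This gives
\[
\int_K\chi_A(X(t,s,x))\,dx=\int_K\chi_A\,dx+\int_t^s\int_K\chi_A(X(t,r,x))\,\nabla\cdot w(r,x)\,dx\,dr.
\]
Rewriting each term through $\chi_A(X(t,r,x))=\chi_{X(r,t,A)}(x)$ turns this directly into \eqref{usi2}.

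\textbf{Transport formula for $g$.} For \eqref{usi1} I would differentiate $G(s):=\int_{X(s,t,A)}g(s,x)\,dx$ in $s$. By the change of variables $x=X(s,t,\xi)$,
\[
G(s)=\int_A g\big(s,X(s,t,\xi)\big)\,J(s,\xi)\,d\xi,\qquad J(s,\xi):=\det\p_\xi X(s,t,\xi),
\]
where $J>0$ by continuity from $J(t,\cdot)=1$. Jacobi's formula gives $\p_s J=(\nabla\cdot w)(s,X(s,t,\xi))\,J$. Together with $\p_sX=w$, this yields
\[
\p_s\big[g(s,X)\,J\big]=\big[\p_s g+w\cdot\nabla g+g\,\nabla\cdot w\big](s,X)\,J=\big[\p_sg+\nabla\cdot(gw)\big](s,X)\,J.
\]
The integrand is continuous in $(s,\xi)$ and $A\subset K$ is bounded, so it is uniformly bounded on compact time intervals. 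Differentiation under the integral is therefore justified, or equivalently one can integrate the pointwise identity in $s$ from $t$ and apply Fubini. Changing variables back to $x$ gives \eqref{usi1}, using $G(t)=\int_A g(t,x)\,dx$. Choosing $g\equiv1$ recovers \eqref{usi2} again, which serves as a consistency check on the Liouville step.

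\textbf{Expected obstacle.} The only delicate point is bookkeeping rather than analysis. One has to keep the roles of the active and initial times in Proposition \ref{classical2} straight, and confirm that $X(r,t,A)$ is Borel, which holds because it is the preimage of $A$ under the continuous map $X(t,r,\cdot)$. Measurability of the integrand in $(r,x)$ also needs care; it follows from continuity of $X$ in all variables, since $(r,x)\mapsto\chi_A(X(t,r,x))$ is Borel. The $C^1$ regularity of $g$ and $w$ on the compact set $\overline K$ makes all interchanges of limits routine.
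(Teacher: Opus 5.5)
Your proof is correct, but it follows a different route from the paper's.

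The paper works in Eulerian coordinates. It differentiates $G(s,t)=\int_{X(s,t,A)}g(s,x)\,dx$ at time $s$ by writing $X(s+h,t,A)=X(s+h,s,X(s,t,A))$ and using the short-time expansion $\det\p_\xi X(s+h,s,y)=1+h\nabla\cdot w(s,y)+o(h)$. It does this first for open $A$. It then reaches closed and general Borel $A$ through outer and inner regularity of Lebesgue measure, using \eqref{usi2} for open sets to show that $\meas(X(r,t,A^\ep)\setminus X(r,t,A))\to0$. Finally, \eqref{usi2} itself is read off as the case $g\equiv1$.

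You instead pull everything back to the fixed reference set $A$ via $x=X(s,t,\xi)$. The domain of integration then no longer moves, and the $s$-dependence sits entirely in the integrand $g(s,X(s,t,\xi))J(s,\xi)$. Jacobi's formula $\p_sJ=(\nabla\cdot w)(s,X)J$, which is the variational equation for $\p_\xi X$, together with Fubini on $[t,s]\times\overline K$, gives \eqref{usi1} for every Borel $A$ at once, with no approximation step.

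What your route buys is the removal of the open/closed/Borel bootstrapping, including the auxiliary use of the open-set Liouville identity. It would also work verbatim for Lebesgue measurable $A$. What it costs is that you need the Jacobian identity globally in $s$, not only to first order at $h=0$. That is, however, the same standard ODE fact the paper itself invokes in the proof of Lemma \ref{FL-product}.

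Your separate derivation of \eqref{usi2} from Proposition \ref{classical2} with $f=\chi_A$ is also correct. The time relabelling is right, and so is the identity $\chi_A(X(t,r,x))=\chi_{X(r,t,A)}(x)$. It is redundant, though, since the $g\equiv1$ case of your Lagrangian computation already yields \eqref{usi2}.
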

\begin{proof} The identity \eqref{usi2} follows from \eqref{usi1} by choosing $g=1$. 
Set  $G(s,t):=\int_{X(s,t,A)}g(s,x)dx$. Let  $A\subset K$ be open. Observe that  
\begin{align*}
G(s+h,t)
&= \int_{X(s+h,t,A)}g(s+h,x)dx
=\int_{X(s,t,A)}g(s+h, X(s+h,s,y))\Big|\det\p_\xi X(s+h,s,y)\Big|dy. 
\end{align*}
Due to $\det\p_\xi X(s+h,s,y)=1+h\nabla\cdot w(s,y)+o(h)$, we have  
\begin{align*}
G(s+h,t)=\int_{X(s,t,A)}g(s+h,X(s+h,s,y))(1+h\nabla\cdot w(s,y)) dy+o(h).
\end{align*}
Therefore, we obtain 
\begin{align*}
&\frac{G(s+h,t)-G(s,t)}{h}\\
&\quad =\int_{X(s,t,A)} \Big\{\frac{g(s+h,X(s+h,s,y))
 -g(s,y)}{h}+g(s+h,X(s+h,s,y))\nabla\cdot w(s,y)\Big\}dy +\frac{o(h)}{h}\\
&\quad \to \int_{X(s,t,A)} \Big\{\p_sg(s,y)+\nabla g(s,y)\cdot \p_s X(s,s,y)+ g(s,y)\nabla\cdot w(s,y)\Big\}dy\quad \mbox{as $h\to0$}.
\end{align*}
Since $\p_s X(s,s,y)=w(s,y)$ and the last integral is continuous with respect to $s$, we conclude \eqref{usi1}. 

When  $A\subset K$ is closed, the regularity of the Lebesgue measure allows us to choose an open set $A^\ep \subset K$ such that $A^\ep\supset A$ and $\meas(A^\ep\setminus A)<\ep$ for each $\ep>0$.  
We apply \eqref{usi2} with the open set  $C^\ep:=A^\ep\setminus A$ to know that $\meas(X(r,t,C_\ep))\to 0$ as $\ep\to0$ for all $r$.   
Then, we  apply \eqref{usi1} with the open set $A^\ep =A\cup C^\ep$  and  send  $\ep$ to zero  to conclude \eqref{usi1}.  
Similarly, when $A$ is Borel measurable, we take an open set $A^\ep \supset A$ and a closed set $B^\ep\subset A$ such that $\meas(A^\ep\setminus A)<\ep$, $\meas(A\setminus B^\ep)<\ep$ for each $\ep>0$.  
 We apply \eqref{usi2} with $A^\ep$ and $B^\ep$ to obtain $\meas(X(r,t,A^\ep)\setminus X(r,t,A))\le \meas(X(r,t,A^\ep)\setminus X(r,t,B^\ep))\to 0$ as $\ep\to0$ for all $r$. 
 Then, we  apply \eqref{usi1} with $A^\ep $   and  send  $\ep$ to zero  to conclude \eqref{usi1}.  
\end{proof}     
\setcounter{section}{2}
\setcounter{equation}{0}
\section{Linear transport equation on a bounded domain} 

In this  section, we investigate weak solutions of the linear transport equation on a bounded domain, which play a key role in  the proofs of Theorems \ref{Thm-DL}--\ref{main}.   
The goal of this section is to obtain a family of weak solutions of  \eqref{LT} parameterized by $s$ and to clarify their $(s,t)$-pointwise interpretation together with their smooth approximation.  
Although the results  themselves do not provide substantially new aspects of DiPerna-Lions theory, it is useful to explicitly describe the technical details specific to the bounded domain case, such as smooth approximation complicated by  trajectory leakage,  dependence on the initial time, compactness, measurability, the choice of suitable representative elements, etc., which make the proofs of the main theorems significantly clearer.     

Let $\Omega\subset \R^3$ and $v=(v_1,v_2,v_3)$ be as in \eqref{v}. 
Note that if $v$ is originally defined only on $[0,\infty)\times\Omega$, the extension $\tilde{v}$ of $v$ as $\tilde{v}(t,x):=v(t,x)$ for $t\ge0$ and $\tilde{v}(t,x):=v(-t,x)$ for $t<0$ recovers \eqref{v}.  
Let $K\subset\R^3$ be a bounded  open ball\footnote{For technical convenience, we take a convex set.} that contains $\overline{\Omega}$. 
We first investigate well-posedness of weak solutions to the linear transport equation with initial time $s\in\R$ 
\begin{align}\label{LT}
\begin{cases}
\p_t \rho(s,t,x)+v(t,x)\cdot\nabla \rho(s,t,x)=0&\mbox{\quad for  $(t,x)\in \R\times\Omega $},\\
\rho(s,s,x)=\rho_0(x)&\mbox{\quad for $x\in\Omega$}
\end{cases}
\end{align}   
in the solution class $L^\infty(\R\times\Omega)$, where we give initial data $\rho_0$ from $L^\infty(\Omega)$ independently from $s$ and deal with the solution of \eqref{LT} having the parameter $s$. 
\medskip

\noindent {\bf Definition 2.} {\it For each fixed $s\in\R$, a function $\rho(s,\cdot,\cdot)\in L^\infty(\R\times\Omega)$ is called a weak solution of \eqref{LT}, provided for every  test function $\varphi\in C^\infty_0(\R^4)$   
\begin{align}\label{LT-weak}
&\int_s^{\infty}\int_\Omega\Big\{ \rho(s,t,x)\p_t\varphi(t,x)+v(t,x)\rho(s,t,x)\cdot\nabla\varphi(t,x)\\\nonumber 
&\qquad + (\nabla\cdot v(t,x))\rho(s,t,x)\varphi(t,x)\Big\}dxdt+\int_\Omega\rho_0(x)\varphi(s,x)\,dx=0, 
\\\label{LT-weak2}
&\int_{-\infty}^{s}\int_\Omega \Big\{\rho(s,t,x)\p_t\varphi(t,x)+v(t,x)\rho(s,t,x)\cdot\nabla\varphi(t,x)\\\nonumber
&\qquad+(\nabla\cdot v(t,x))\rho(s,t,x)\varphi(t,x)\Big\}dxdt-\int_\Omega\rho_0(x)\varphi(s,x)\,dx=0.
\end{align}}
\vspace{-3mm}

\noindent {\bf Remark.} {\it We deliberately do not restrict the test functions to be  compactly supported in $\R\times\Omega$; the reason for this choice will become clear in the proof of Lemma \ref{LT-commu}. }
\medskip

Existence of weak solutions is proven through a mollification technique, i.e., we regularize $v$ and $\rho_0$ so that the classical results for  \eqref{1ODE} and \eqref{1LT} are applicable.   
Let $\{\ep_k\}_{k\in\N}$ be a  sequence such that $0<\ep_1\ll1$ and  $\ep_k\searrow 0$ as $k\to\infty$. 
Extend  $v$  to be zero over $\R^3\setminus\Omega$, where the extension is still weakly $x$-differentiable,  and define $v^k:\R\times\R^3\to\R^3$ as the mollification of $v$ by means of the mollifier in $\R^4$ with the parameter $\ep_k$.  
We define $\rho_0^k$ in one of the following two ways: 
\begin{itemize}
\item[(I1)] If initial data $\rho_0$ is given as $\rho_0=f|_{\Omega}$ with some $f\in C^1(\R^3)$, e.g., $\rho_0(x)=x_i$ ($i=1,2$ or $3$), define $\rho^k_0:=f$ for each $k\in\N$. 
\item[(I2)]  Extend $\rho_0$ to be zero over $\R^3\setminus \Omega$ and  define $\rho^k_0:\R^3\to\R$ as the mollification of $\rho_0$ by means of the mollifier in $\R^3$ with the parameter $\ep_k$.  
\end{itemize}
Then it holds that   for all $T_0<T_1$ and $i,j=1,2,3$, 
\begin{align*}
\begin{cases}
v_i^k\in C^\infty(\R\times\R^3), \qquad\,\,\,\, \,
 {\rm supp} (v_i^k)\subset \R\times  K \quad \mbox{for all $k\in\N$},\\ 
\norm v_i^k-v_i\norm_{L^1([T_0,T_1]\times K)}, \quad 
\norm \p_{x_j}v_i^k-\p_{x_j}v_i\norm_{L^1([T_0,T_1]\times K)}\to0\mbox{\,\,\, as $k\to\infty$},\\
\mbox{$\rho_0^k\in C^1(\R^3)$,\qquad\qquad \,\,\,\,\,
$\norm \rho_0^k-\rho_0\norm_{L^1(K)}\to0$ as  $k\to\infty$.}
\end{cases}
\end{align*} 

Consider the ODE 
\begin{align}\label{LT-ODE}
x'(s)=v^k(s,x(s)),\quad x(t)=\xi\quad \mbox{for each $(t,\xi)\in\R\times\R^3$}.   
\end{align}
Since $v^k$ vanishes outside $\R\times K$, there exists the $C^1$-flow map $X^k:\R\times\R\times\R^3\to \R^3$ of \eqref{LT-ODE} such that $X^k(s,t,K)=K$, $X^k(s,t,\p K)=\p K$ and $X^k(s,t,\cdot)^{-1}=X^k(t,s,\cdot)$. 
Define the $C^1$-function $\rho^k(s,\cdot,\cdot)$ for each $s\in\R$ as\footnote{$\rho^{k}(s,t,x)$ is actually $C^1$-smooth for all varables $s,t,x$.} 
\begin{align}\label{LT-approx}
\rho^k(s,\cdot,\cdot):\R\times \R^3\to\R,\,\,\,\,\, \rho^k(s,t,x):=\rho_0^k(X^k(s,t,x)), 
\end{align}
where $X^k(s,t,\xi)=\xi$ for all $(t,\xi)\in\R\times(\R^3\setminus K)$ due to  ${\rm supp} (v^k)\subset \R\times  K$ and hence $\rho^k(s,t,x)=\rho_0^k(x)$ for all $(t,x)\in\R\times (\R^3\setminus K)$. 
We see that $\rho^k$ satisfies for all $k\in\N$,
\begin{align}\label{1LT-2}
&\begin{cases}
\p_t \rho^k(s,t,x)+v^k(t,x)\cdot\nabla \rho^k(s,t,x)=0&\mbox{\quad for $(t,x)\in\R\times \R^3$},\\
\rho^k(s,s,x)=\rho_0^k(x)&\mbox{\quad for $x\in \R^3$}. 
\end{cases}
\end{align}
\begin{Prop}\label{LT-existence}
For each $\rho_0\in L^{\infty}(\Omega)$ and $s\in\R$, there exists a weak solution $\rho(s,\cdot,\cdot)\in L^\infty(\R\times\Omega)$ of \eqref{LT}.
\end{Prop}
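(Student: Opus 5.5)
We will obtain the weak solution as a weak-$*$ limit of the smooth solutions $\rho^k(s,\cdot,\cdot)$ defined in \eqref{LT-approx}, using initial data $\rho^k_0$ constructed by (I2). Fix $s\in\R$. Since $\rho^k_0$ is a mollification of the zero extension of $\rho_0$, we have $|\rho^k_0|\le \norm\rho_0\norm_{L^\infty(\Omega)}$ everywhere. Hence $|\rho^k(s,t,x)|=|\rho^k_0(X^k(s,t,x))|\le\norm\rho_0\norm_{L^\infty(\Omega)}$ for all $(t,x)$. So $\{\rho^k(s,\cdot,\cdot)|_{\R\times\Omega}\}_k$ is bounded in $L^\infty(\R\times\Omega)=(L^1(\R\times\Omega))^*$. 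By Banach–Alaoglu (the predual is separable), there are a subsequence, still denoted by $k$, and $\rho(s,\cdot,\cdot)\in L^\infty(\R\times\Omega)$ with $\rho^k(s,\cdot,\cdot)\wto\rho(s,\cdot,\cdot)$ weakly-$*$. The subsequence may depend on $s$; this is harmless here, since only existence for each fixed $s$ is claimed.

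Next we derive the approximate weak formulation. Fix $\varphi\in C^\infty_0(\R^4)$. The function $\rho^k$ is $C^1$ and solves \eqref{1LT-2} on all of $\R\times\R^3$. Multiply \eqref{1LT-2} by $\varphi$ and integrate over $[s,\infty)\times K$, writing $v^k\cdot\nabla\rho^k\varphi=\nabla\cdot(v^k\rho^k\varphi)-\rho^k v^k\cdot\nabla\varphi-(\nabla\cdot v^k)\rho^k\varphi$. The divergence term vanishes because ${\rm supp}(v^k(t,\cdot))\subset K$; this is exactly why test functions need not vanish on $\p\Omega$. Integrating by parts in $t$ then gives
\begin{align*}
\int_s^\infty\!\!\int_K\Big\{\rho^k\p_t\varphi+v^k\rho^k\cdot\nabla\varphi+(\nabla\cdot v^k)\rho^k\varphi\Big\}dxdt+\int_K\rho^k_0(x)\varphi(s,x)dx=0,
\end{align*}
and the analogous identity holds on $(-\infty,s]$ with the opposite sign on the boundary term.

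We then pass to the limit term by term, splitting $K=\Omega\cup(K\setminus\Omega)$.
\begin{enumerate}
\item The term $\int\rho^k\p_t\varphi$ over $\Omega$ converges by weak-$*$ convergence, since $\p_t\varphi\in L^1$.
\item For $\int v^k\rho^k\cdot\nabla\varphi$, we write $v^k\rho^k=(v^k-v)\rho^k+v\rho^k$. The first part is bounded by $\norm\rho_0\norm_\infty\norm v^k-v\norm_{L^1}\norm\nabla\varphi\norm_\infty\to0$ on the compact time support of $\varphi$. The second part converges weakly-$*$ because $v\cdot\nabla\varphi\in L^1$. The divergence term is handled the same way, using $\nabla\cdot v^k\to\nabla\cdot v$ in $L^1$.
\item On $K\setminus\Omega$ the zero extension gives $v=0$ and $\nabla\cdot v=0$; the latter holds because the extension is weakly differentiable with zero derivative there, by $W^{1,1}_0$. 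Together with the uniform bound on $\rho^k$, the $v^k$ and $\nabla\cdot v^k$ contributions therefore tend to $0$. The $\p_t\varphi$ contribution, however, does not vanish.
\end{enumerate}

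The main obstacle is this leftover term outside $\Omega$. On $\R^3\setminus K$, $X^k$ is the identity, and (I2) data is essentially zero outside $\Omega$, up to an $\ep_k$-neighbourhood. We handle it directly: the integrand $\rho^k\p_t\varphi$ over $K\setminus\Omega$ pairs with the boundary term $\int_{K\setminus\Omega}\rho^k_0\varphi(s,\cdot)$, and the two combine into $\int_{K\setminus\Omega}$ of the equation itself. The cleanest route is to use $\meas(\Omega)=\meas(\overline\Omega)$ together with the $L^1$ smallness of $\rho^k_0$ on $K\setminus\Omega$ ($\norm\rho^k_0-\rho_0\norm_{L^1(K)}\to0$ with $\rho_0=0$ there). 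We also need $\rho^k$ to be small on $K\setminus\Omega$ in $L^1$. To get this, apply Proposition \ref{classical2} to $f=|\rho^k_0|\chi$-type quantities, tracking mass leaking into $K\setminus\Omega$ via the Jacobian bound $e^{\int\norm\nabla\cdot v^k\norm_\infty}$. This bound is uniform in $k$ since $\norm\nabla\cdot v^k\norm_{L^\infty}\le\norm\nabla\cdot v\norm_{L^\infty}$ on each time slice, because the zero-extension divergence is bounded.

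Alternatively, and preferably, we avoid this issue by testing only on $\Omega$ from the start. We integrate \eqref{1LT-2} over $[s,\infty)\times\Omega$ instead of $K$. The boundary flux then involves $v^k$ on $\p\Omega$, and we show it vanishes in the limit because $v^k\to v$ in $L^1$ and $v$ has zero trace on $\p\Omega$. This can be done by approximating $\chi_\Omega$ by cutoffs and using $\meas(\overline\Omega\setminus\Omega)=0$. I expect this boundary/leakage control to be the delicate step. Finally, the initial term converges since $\rho^k_0\to\rho_0$ in $L^1(\Omega)$ and $\varphi(s,\cdot)$ is bounded. This yields \eqref{LT-weak}, and \eqref{LT-weak2} follows in the same way.
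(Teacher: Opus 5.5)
\textbf{There is a gap at the step you yourself flag as the main obstacle.} The rest of your architecture matches the paper's: compactness of $\rho^k(s,\cdot,\cdot)$, the approximate identity on $[s,\infty)\times K$, and term-by-term passage to the limit. Weak-$*$ compactness in $L^\infty(\R\times\Omega)$ is a genuine simplification. It needs no diagonal argument, and $v\rho^k\cdot\nabla\varphi$ converges directly because $v\cdot\nabla\varphi\in L^1$; the paper, working in $L^2$, needs a three-term splitting since $v\notin L^2$.

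The unclosed step is showing that
$L_k:=\int_s^\infty\int_{K\setminus\Omega}\rho^k\p_t\varphi\,dxdt+\int_{K\setminus\Omega}\rho^k_0\varphi(s,x)\,dx\to0$.

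\textbf{Your second route does not work under the paper's hypotheses.} $\Omega$ is an arbitrary bounded open set with $\meas(\p\Omega)=0$, so there is no divergence theorem or $W^{1,1}$-trace on $\p\Omega$. The ``flux'' only makes sense as $F_k:=\int_s^\infty\int_\Omega\nabla\cdot(v^k\rho^k\varphi)\,dxdt$. The same integral over $K$ vanishes, so $F_k$ equals minus the $K\setminus\Omega$ contributions, i.e.\ $-L_k$ up to terms that already tend to $0$. Nothing is avoided.
\begin{itemize}
\item $L^1$-convergence of $v^k$ cannot control $F_k$, since it contains $\varphi\, v^k\cdot\nabla\rho^k$ and $\nabla\rho^k$ is unbounded in $k$. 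On a Lipschitz domain, continuity of the $W^{1,1}$-trace would rescue this, but no such regularity is assumed.
\item The cutoff variant (test with $\varphi\chi^m$, $\chi^m\in C^\infty_0(\Omega)$, let $k\to\infty$, then $m\to\infty$) leaves $\int\!\!\int v\rho\varphi\cdot\nabla\chi^m$ with $\rho$ merely bounded. Killing it needs a boundary-layer estimate for $W^{1,1}_0$ fields that you do not supply, and that is not a standard fact without regularity of $\p\Omega$.
\end{itemize}

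\textbf{Your first route can be completed, but the Jacobian bound is not what does it.} The factor $e^{\int\|\nabla\cdot v^k\|_{L^\infty}}$ controls how much mass there is, not where it sits. Also, with space-time mollification there is no slicewise bound $\|\nabla\cdot v^k(t,\cdot)\|_{L^\infty}\le\|\nabla\cdot v(t,\cdot)\|_{L^\infty}$, only the time-integrated one on an interval enlarged by $\ep_k$.

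The missing idea is confinement. Let $U_k:=\{x:{\rm dist}(x,\Omega)<2\ep_k\}$.
\begin{itemize}
\item Then $v^k=0$ on $\R\times(\R^3\setminus U_k)$, so the constant curve solves the ODE and $X^k(s,t,x)=x$ for every $x\notin U_k$, not merely for $x\notin K$.
\item With (I2) also $\rho^k_0=0$ off $U_k$, hence $\rho^k(s,t,x)=\rho^k_0(x)=0$ for $x\notin U_k$ and every $t$.
\item Therefore $|L_k|\le\|\rho_0\|_{L^\infty(\Omega)}\meas(U_k\setminus\Omega)\big(\int_s^\infty\|\p_t\varphi(t,\cdot)\|_{L^\infty}dt+\|\varphi(s,\cdot)\|_{L^\infty}\big)$.
\item This tends to $0$ because $U_k$ decreases to $\overline\Omega$ and $\meas(\overline\Omega\setminus\Omega)=0$.
\end{itemize}
With this inserted your proof closes. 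It is then slightly shorter than the paper's.

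The paper handles this term differently. It passes to the limit on all of $K$. It then tests the limit identity with $\varphi$ supported in $K\setminus\overline\Omega$ to get $\p_t\rho=0$ there, and then $\rho(s,t,\cdot)=\rho_0$ on $K\setminus\overline\Omega$ for $t\ge s$. So the outside terms cancel exactly, and $\p\Omega$ is discarded via $\meas(\p\Omega)=0$. That argument needs no smallness of $\rho^k_0$ outside $\Omega$, so it treats (I1), e.g.\ $\rho^k_0(x)=x_i$, and (I2) uniformly.
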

\begin{proof}
We investigate weak convergence of the above $\rho^k$ together with the weak form of \eqref{1LT-2}. 
Let $\beta\ge0$ be a constant such that $-\beta\le \rho_0(x)\le \beta$ a.e. $x\in K$.  
Since $K$ is bounded, $\{\rho^k(s,\cdot,\cdot)=\rho_0^k(X^k(s,\cdot,\cdot))\}_{k\in\N}$ is bounded\footnote{One could proceed by $L^\infty$-calculus here. In any case, we later need $L^p$-calculus with $1<p<\infty$ and we demonstrate the current proof through elementary $L^2$-calculus.} in $L^2([-T,T]\times K)$ for each $T>0$.   
A diagonal $L^2$-weak-compactness argument is used to obtain a time-globally convergent subsequence.

Since $\{\rho^k(s,\cdot,\cdot)|_{t\in[-1,1]}\}_{k\in\N}$ is a bounded sequence of $L^2([-1,1]\times K)$,  we find a weakly convergent subsequence $\{\rho^{a^1_k}(s,\cdot,\cdot)\}_{k\in\N}\subset \{\rho^k(s,\cdot,\cdot)\}_{k\in\N}$ and its limit $\mu^1(s,\cdot,\cdot)\in L^2([-1,1]\times K)$. 
We claim that 
$$-\beta\le \mu^1(s,t,x)\le \beta\mbox{ a.e. $(t,x)\in [-1,1]\times K$. }$$
In fact, set the non-positive function $\nu(t,x):=\min\{ \mu^1(s,t,x)+\beta,0 \};$   
 since $\rho^{a^1_k}(s,\cdot,\cdot)+\beta\ge0$ a.e. by assumption, we have $(\rho^{a^1_k}(s,\cdot,\cdot)+\beta, \nu)_{L^2([-1,1]\times K)}\le0$ for all $k\in\N$ and  
 $$(\rho^{a^1_k}(s,\cdot,\cdot)+\beta, \nu)_{L^2([-1,1]\times K)}\to(\mu^1(s,\cdot,\cdot)+\beta, \nu)_{L^2([-1,1]\times K)}=\norm\nu\norm_{L^2([-1,1]\times K)}^2\mbox{ as $k\to\infty$},$$ 
which leads to $\norm\nu\norm_{L^2([-1,1]\times K)}^2\le0$ and $\nu=0$, i.e., $\mu^1(s,\cdot,\cdot)\ge-\beta$; 
similarly, with $\nu(x):=\min\{ \beta- \mu^1(s,t,x),0 \}$, we obtain $\mu^1(s,\cdot,\cdot)\le\beta$. 

We find an $L^2([-2,2]\times K)$-weakly convergent subsequence $\{\rho^{a^2_k}(s,\cdot,\cdot)\}_{k\in\N}\subset \{\rho^{a^1_k}(s,\cdot,\cdot)\}_{k\in\N}$ and its limit $\mu^2(s,\cdot,\cdot)\in L^2([-2,2]\times K)$ such that $-\beta\le \mu^2(s,\cdot,\cdot)\le\beta$.
For any $f\in L^2([-2,2]\times K)$ with supp$(f)=[-1,1]^2\times K$, we have 
$$(\rho^{a^2_k}(s,\cdot,\cdot),f)_{L^2([-2,2]\times K)}\to (\mu^{1}(s,\cdot,\cdot),f)_{L^2([-1,1]\times K)}=(\mu^{2}(s,\cdot,\cdot),f)_{L^2([-1,1]\times K)}\quad(k\to\infty),$$
which means that $\mu^1(s,\cdot,\cdot)=\mu^2(s,\cdot,\cdot)|_{t\in[-1,1]}$. 
Repeating this process, we find for each $n\in\N$ an $L^2([-n-1,n+1]\times K)$-weakly convergent subsequence $\{\rho^{a^{n+1}_k}(s,\cdot,\cdot)\}_{k\in\N}\subset \{\rho^{a^n_k}(s,\cdot,\cdot)\}_{k\in\N}$ and its limit $\mu^{n+1}(s,\cdot,\cdot)\in L^2([-n-1,n+1]\times K)$  such that $-\beta\le \mu^{n+1}(s,\cdot,\cdot)\le\beta$, where $\mu^{n+1}(s,\cdot,\cdot)|_{t\in[-n',n']}=\mu^{n'}(s,\cdot,\cdot)$ for all $n'\le n$.  
Define $a_k:=a^k_k$ and $\rho(s,\cdot,\cdot)\in L^2_{t\text{-loc}}(\R\times K)$ as 
$$\mbox{$\rho(s,t,x):=\mu^n(s,t,x)$ with $n\in\N$ such that  $t\in[-n,n]$. }$$  
For each $T_0<T_1$, taking $n\in\N$ such that $-n\le T_0<T_1\le n$, we have $\{\rho^{a_k}(s,\cdot,\cdot)\}_{k\ge n}\subset \{\rho^{a^n_k}(s,\cdot,\cdot)\}_{k\in\N}$ and 
\begin{align*}
&\rho^{a_k}(s,\cdot,\cdot)|_{t\in[T_0,T_1]} \wto \mu^n(s,\cdot,\cdot)|_{t\in[T_0,T_1]}=\rho(s,\cdot,\cdot)|_{t\in[T_0,T_1]}  \mbox{ \,\,\, in $L^2([T_0,T_1]\times K)$ as $k\to\infty$}.
\end{align*} 
 \indent Since $\rho^{a_k}(s,\cdot,\cdot)$ is smooth satisfying \eqref{1LT-2} and $\rho^{a_k}(s,t,x)=\rho_0^{a_k}(x)$ for all $(t,x)\in \R\times (\R^3\setminus K)$ due to  $ {\rm supp} (v^{a_k})\subset \R\times  K$, for each test function $\varphi\in C^\infty_0(\R^4)$, integration of \eqref{1LT-2}$\times \varphi$ over $[s,\infty)\times\R^3$ and $(-\infty,s]\times\R^3$ yields 
\begin{align*}
0=&\int_s^{\infty}\int_{K}\Big\{ \rho^{a_k}(s,t,x)\p_t\varphi(t,x)+v^{a_k}(t,x)\rho^{a_k}(s,t,x)\cdot\nabla\varphi(t,x)\\
&+ (\nabla\cdot v^{a_k}(t,x))\rho^{a_k}(s,t,x)\varphi(t,x)\Big\}dxdt+\int_{K}\rho^{a_k}_0(x)\varphi(s,x)\,dx,\\ 
0=&\int_{-\infty}^{s}\int_{K}\Big\{ \rho^{a_k}(s,t,x)\p_t\varphi(t,x)+v^{a_k}(t,x)\rho^{a_k}(s,t,x)\cdot\nabla\varphi(t,x)\\
&\quad + (\nabla\cdot v^{a_k}(t,x))\rho^{a_k}(s,t,x)\varphi(t,x)\Big\}dxdt
-\int_{K}\rho^{a_k}_0(x)\varphi(s,x)\,dx, 
\end{align*}
where $\int_s^{\infty}\int_{\R^3\setminus K}\rho^{a_k}(s,t,x)\p_t\varphi(t,x)dxdt+\int_{\R^3\setminus K}\rho^{a_k}_0(x)\varphi(s,x)\,dx=0$ is used. 
It is clear that  
\begin{align*}
&\int_s^{\infty}\int_K \rho^{a_k}(s,t,x)\p_t\varphi(t,x)dxdt\to \int_s^{\infty}\int_K \rho(s,t,x)\p_t\varphi(t,x)dxdt\quad\mbox{ as $k\to\infty$},\\
&\int_K\rho^{a_k}_0(x)\varphi(s,x)\,dx\to \int_K\rho_0(x)\varphi(s,x)\,dx\quad\mbox{ as $k\to\infty$}.
\end{align*}
Let $T>0$ be such that supp$(\varphi)\subset[s-T,s+T]\times[-T,T]^3$. 
For each (sufficiently small) $\ep>0$, we find $k_\ep\in\N$ such that 
$$\norm v^{a_{k}}-v\norm_{L^1([s,s+T]\times K)^3}<\ep,\quad \norm \nabla\cdot v^{a_{k}}-\nabla\cdot v\norm_{L^1([s,s+T]\times K)}<\ep\quad \mbox{ for all $k\ge k_\ep$.}$$
Hence, it holds that 
\begin{align*}
&\int_s^{\infty}\int_Kv^{a_k}(t,x)\rho^{a_k}(s,t,x)\cdot\nabla\varphi(t,x)dxdt
-\int_s^{\infty}\int_K v(t,x)\rho(s,t,x)\cdot\nabla\varphi(t,x)dxdt\\
&= \uwave{\int_s^{s+T}\int_K (v^{a_k}(t,x) - v^{a_{k_\ep}}(t,x) )\rho^{a_k}(s,t,x)\cdot\nabla\varphi(t,x)dxdt}_{\rm(i)} \\
&\quad + \uwave{\int_s^{s+T}\int_K v^{a_{k_\ep}}(t,x) (\rho^{a_k}(s,t,x)-\rho(s,t,x) )\cdot\nabla\varphi(t,x)dxdt }_{\rm(ii)}\\
&\quad +  \uwave{\int_s^{s+T}\int_K (v^{a_k{_\ep}}(t,x) - v(t,x) )\rho(s,t,x)\cdot\nabla\varphi(t,x)dxdt }_{\rm(iii)}, \\
&|{\rm(i)}|\le \max|\nabla\varphi |\beta \norm v^{a_k}-v^{a_{k_\ep}}\norm_{L^1([s,s+T]\times K)^3}\le 2\max|\nabla\varphi |\beta \ep\quad \mbox{for all $k\ge\ k_\ep$},\\
&|{\rm(iii)}|\le \max|\nabla\varphi |\beta \norm v^{a_k}-v\norm_{L^1([s,s+T]\times K)^3}\le \max|\nabla\varphi |\beta \ep,\\
&{\rm (ii)}\to 0\quad\mbox{ as $k\to\infty$}, 
\end{align*}
where we note that  $v^{a_{k_\ep}}\in L^2([s,s+T]\times K)^3$ in (ii). 
This confirms that 
$$\int_s^{\infty}\int_Kv^{a_k}(t,x)\rho^{a_k}(s,t,x)\cdot\nabla\varphi(t,x)dxdt
\to\int_s^{\infty}\int_K v(t,x)\rho(s,t,x)\cdot\nabla\varphi(t,x)dxdt\quad \mbox{as $k\to\infty$}.$$
A similar argument yields 
$$\int_s^{\infty}\int_K  (\nabla\cdot v^{a_k}(t,x))\rho^{a_k}(s,t,x)\varphi(t,x)dxdt
\to\int_s^{\infty}\int_K  (\nabla\cdot v(t,x))\rho(s,t,x)\varphi(t,x)dxdt\quad \mbox{as $k\to \infty$}.$$
Therefore, we obtain    
\begin{align}\label{weak21}
0=&\int_s^{\infty}\int_K \Big\{\rho(s,t,x)\p_t\varphi(t,x)+v(t,x)\rho(s,t,x)\cdot\nabla\varphi(t,x)\\\nonumber
&\qquad + (\nabla\cdot v(t,x))\rho(s,t,x)\varphi(t,x)\Big\}dxdt+\int_K\rho_0(x)\varphi(s,x)\,dx.
\end{align}
Since $v\equiv 0$ in $\R\times(\R^3\setminus\Omega)$, it holds that   
\begin{align}\label{yo}
&0=\int_s^{\infty}\int_\Omega \Big\{\rho(s,t,x)\p_t\varphi(t,x)+v(t,x)\rho(s,t,x)\cdot\nabla\varphi(t,x)\\\nonumber
&\quad + (\nabla\cdot v(t,x))\rho(s,t,x)\varphi(t,x)\Big\}dxdt+\int_s^{\infty}\int_{K\setminus\overline{\Omega}} \rho(s,t,x)\p_t\varphi(t,x)\,dxdt+\int_K\rho_0(x)\varphi(s,x)\,dx.
\end{align}
If $\varphi$ is such that supp$(\varphi)\subset(s,\infty)\times(K\setminus \overline{\Omega})$, \eqref{yo} yields
$$0=\int_s^{\infty}\int_{K\setminus\overline{\Omega}} \rho(s,t,x)\p_t\varphi(t,x)\,dxdt,$$
which implies that $\rho(s,t,\cdot)$ is independent of $t\ge s$ in $K\setminus\overline{\Omega}$. Hence, if $\varphi$ is such that supp$(\varphi)\subset\R\times(K\setminus \overline{\Omega})$,  \eqref{yo} yields
$$0=-\int_{K\setminus\overline{\Omega}} \rho(s,t,x)\varphi(s,x)\,dx+\int_{K\setminus\overline{\Omega}}\rho_0(x)\varphi(s,x)\,dx
\quad \mbox{ for all $t\in[s,\infty)$},$$
which implies that $\rho(s,t,\cdot)=\rho_0$ on $K\setminus \overline{\Omega}$ for all $t\ge s$.  
Thus we see that \eqref{weak21} is equivalent to \eqref{LT-weak}. 
A similar argument confirms \eqref{LT-weak2} to conclude that $\rho(s,\cdot,\cdot)|_{x\in \Omega}$ is a weak solution of \eqref{LT}.
\end{proof}
\indent We investigate evolution of the norm of weak solutions, which leads to the uniqueness of \eqref{LT}, continuity of weak solution in the $t$-variable,  strong convergence of $\{\rho^k(s,\cdot,\cdot)\}$, etc. 
For this purpose, we need a technical lemma, which is the key technical part of DiPerna-Lions theory, based on mollification by the mollifier $\{\eta^\ep\}_{\ep>0}$, $\eta^\ep:\R^3\to\R$ given as 
\begin{align*}
\eta^\ep(x):=\frac{1}{\ep^3}\eta\Big(\frac{x}{\ep}\Big),\quad 
\eta(x):=
\begin{cases}
Ce^{\frac{1}{|x|^2-1}}\mbox{\quad if $|x|<1$}\\
0\mbox{\quad if $|x|\ge1$},
\end{cases}\mbox{ with $\dis\int_\R^3\eta(x)dx=1$},
\end{align*}
where $\ep>0$ is taken small enough so that 
\begin{align}\label{ne1}
\{x-y\,|\, y\in B_\ep (0)  \}\cap \Omega =\emptyset\mbox{\quad for all $x\in \R^3\setminus K$}. 
\end{align}
Let $\rho(s,\cdot,\cdot)\in L^\infty(\R\times\Omega)$ be a weak solution of \eqref{LT}.
We introduce $\rho_\pm(s,\cdot,\cdot)$ and  $v_\pm$ as
\begin{align*}
\rho_+(s,t,x)&:=
\begin{cases}
\rho(s,t,x)&\quad\mbox{if $t\ge s$, $x\in\Omega$},\\
\rho_0(x)&\quad\mbox{if $t<s$, $x\in\Omega$},\\
0&\quad\mbox{if $x\not\in\Omega$},
\end{cases}\qquad 
\rho_-(s,t,x):=
\begin{cases}
\rho_0(x)&\quad\mbox{if $t>s$, $x\in\Omega$},\\
\rho(s,t,x)&\quad\mbox{if $ t\le s$, $x\in\Omega$},\\
0&\quad\mbox{if $x\not\in\Omega$}.
\end{cases}\\
v_+(t,x)&:=
\begin{cases}
v(t,x)&\quad\mbox{if $t\ge s$},\\
0&\quad\mbox{if $t<s$},\\
\end{cases}\qquad\qquad \qquad\quad\,
v_-(t,x):=
\begin{cases}
0&\quad\mbox{if $t>s$},\\
v(t,x)&\quad\mbox{if $t\le s$}, 
\end{cases}
\end{align*}
where $v$ is extended to be zero outside $\Omega$.
Set $\rho_\pm^\ep(t,\cdot):=\eta^\ep\ast \rho_\pm(s,t,\cdot)$, $\rho_0^\ep:=\eta^\ep\ast \rho_0$ with $\rho_0$ extended to be zero outside $\Omega$\footnote{We use the  $0$-extension of $\rho_0$ here, no matter which way (I1) or (I2) is used to construct $\rho(s,\cdot,\cdot)$.}. 
Note that $\rho^\ep_+:\R\times\R^3\to \R$, $\rho^\ep_-:[0,\infty)\times\R^3\to \R$ are  measurable\footnote{$F^\ep(s,t,x,y):=\eta^\ep(x-y)\rho_\pm(s,t,y)$ is measurable and  Fubini's theorem implies that $\rho^\ep_\pm(s,t,x)=\int_{\R^3}F^\ep(s,t,x,y)dy$ is measurable in $(t,x)$.}.  
In the upcoming investigation of the norm of weak solutions, we require absolute continuity of $\norm \rho(s,t,\cdot)\norm_{L^2(\Omega)}$ with respect to $t$  and the (a priori undefinable) value of $\norm \rho(s,s,\cdot)\norm_{L^2(\Omega)}$.  
In the following Lemma  \ref{LT-commu}, the range of the time integration contains $[s-1,s]$ or $[s,s+1]$, on which $\rho^\ep_\pm(s,t,\cdot)=\rho_0^\ep$. This leads to justification of  ``$\norm \rho(s,s,\cdot)\norm_{L^2(\Omega)}=\norm \rho^0\norm_{L^2(\Omega)}$''. 
\begin{Lemma}[key]\label{LT-commu}
1. Let $s\in\R$ be arbitrary. For each $\ep>0$, there exists $R_+^\ep(s,\cdot,\cdot)\in L^1_{\rm loc}([s-1,\infty)\times{K} )$ such that for all $\varphi\in C^\infty_0(\R^4)$ we have    
\begin{align*}
&\int_{s-1}^{\infty}\int_K\Big\{ \rho_+^\ep(s,t,x)\p_t\varphi(t,x)+v_+(t,x)\rho_+^\ep(s,t,x)\cdot\nabla\varphi(t,x)\\
&+ (\nabla\cdot v_+(t,x))\rho^\ep_+(s,t,x)\varphi(t,x)\Big\}dxdt
+\int_{K}  \rho_0^\ep(x)\varphi(s-1,x)\,dx=\int_{s-1}^{s+T}\int_K R_+^\ep(s,t,x)\varphi(t,x)dxdt.
\end{align*}
Furthermore, for each $T>0$, it holds that  $\norm R_+^\ep(s,\cdot,\cdot)\norm_{L^1([s-1,s+T]\times{K} )}\to0$  as $\ep\to0$.

2.  Let $s\in\R$ be arbitrary. For each $\ep>0$, there exists $R_-^\ep(s,\cdot,\cdot)\in L^1_{\rm loc}((-\infty,s+1]\times{K} )$ such that for all $\varphi\in C^\infty_0(\R^4)$ we have     
\begin{align*}
&\int_{-\infty}^{s+1}\int_{K}  \Big\{ \rho_-^\ep(s,t,x)\p_t\varphi(t,x)+v_-(t,x)\rho_-^\ep(s,t,x)\cdot\nabla\varphi(t,x)\\
& +   (\nabla\cdot v_-(t,x))\rho^\ep_-(s,t,x)\varphi(t,x) \Big\}dxdt
-\int_{K} \rho_0^\ep(x)\varphi(s+1,x)\,dx=\int_{-\infty}^{s+1}\int_{K}  R_-^\ep(s,t,x)\varphi(t,x)dxdt.
\end{align*}
Furthermore, for each $T>0$,  it holds that  $\norm R_-^\ep(s,\cdot,\cdot)\norm_{L^1([s-T,s+1]\times{K} )}\to0$  as $\ep\to0$.
\end{Lemma}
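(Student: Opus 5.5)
We follow the standard DiPerna--Lions commutator argument, adapted to the extended functions $\rho_\pm$, $v_\pm$. We treat part 1; part 2 is symmetric under time reversal. The plan has three steps: show that the extended pair $(\rho_+,v_+)$ is a distributional solution on all of $[s-1,\infty)\times\R^3$, mollify in $x$ only, and identify $R^\ep_+$ as a commutator whose $L^1$ norm vanishes as $\ep\to0$.

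\textbf{Step 1: an equation for $\rho_+$ valid for all test functions.} The weak form \eqref{LT-weak} for $t\ge s$ holds for every $\varphi\in C^\infty_0(\R^4)$. This is exactly why test functions are not required to vanish near $\R\times\p\Omega$, as the Remark after Definition 2 anticipates. Combined with $v_+=0$ and $\rho_+(s,t,\cdot)=\rho_0$ for $t<s$, it gives
\[
\int_{s-1}^\infty\!\int_{\R^3}\{\rho_+\p_t\psi+v_+\rho_+\cdot\nabla\psi+(\nabla\cdot v_+)\rho_+\psi\}\,dxdt+\int_{\R^3}\rho_0\psi(s-1,\cdot)\,dx=0
\]
for all $\psi\in C^\infty_0(\R^4)$. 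Here all functions are zero-extended outside $\Omega$, and the contributions on $[s-1,s]$ telescope to $\int\rho_0\psi(s,\cdot)$. Since $v(t,\cdot)\in W^{1,1}_0(\Omega)$, the zero extension is in $W^{1,1}(\R^3)$, so no boundary terms arise.

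\textbf{Step 2: mollification in $x$ and definition of $R^\ep_+$.} We take $\psi(t,y)=(\eta^\ep\ast\varphi(t,\cdot))(y)$, using that $\eta$ is even. Moving the convolution onto $\rho_+$ produces the terms $\rho^\ep_+\p_t\varphi$ and $\rho^\ep_0\varphi(s-1)$ together with $\eta^\ep\ast(v_+\rho_+)\cdot\nabla\varphi$ and $\eta^\ep\ast((\nabla\cdot v_+)\rho_+)\varphi$. We then define
\[
R^\ep_+:=\big[v_+\rho^\ep_+-\eta^\ep\ast(v_+\rho_+)\big]\cdot\nabla(\cdot)\ \text{(moved onto $\varphi$ by parts)}+\ \big[(\nabla\cdot v_+)\rho^\ep_+-\eta^\ep\ast((\nabla\cdot v_+)\rho_+)\big].
\]
After integrating by parts the first bracket, this equals the classical commutator
\[
R^\ep_+=\eta^\ep\ast(v_+\cdot\nabla\rho_+)-v_+\cdot\nabla\rho^\ep_+
\]
in distributional form, namely
\[
R^\ep_+=\int\rho_+(s,t,y)\,\big(v_+(t,x)-v_+(t,y)\big)\cdot\nabla\eta^\ep(x-y)\,dy\;-\;\eta^\ep\ast\big((\nabla\cdot v_+)\rho_+\big)+\ldots,
\]
with the divergence terms organised so that everything is an $L^1$ function. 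The choice \eqref{ne1} guarantees that $\rho^\ep_+$ and $R^\ep_+$ vanish for $x\notin K$, so the integrals may be restricted to $K$.

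\textbf{Step 3: $L^1$ convergence of the commutator (the main obstacle).} We bound $R^\ep_+$ pointwise by
\[
\|\rho\|_\infty\int\frac{|v_+(t,x)-v_+(t,x-\ep z)|}{\ep}\,|\nabla\eta(z)|\,dz+\|\rho\|_\infty\,|\nabla\cdot v_+|\ast\eta^\ep .
\]
The difference-quotient estimate $\|v(t,\cdot-\ep z)-v(t,\cdot)\|_{L^1}\le\ep|z|\,\|\nabla v(t)\|_{L^1}$ gives the uniform bound $\|R^\ep_+\|_{L^1([s-1,s+T]\times K)}\le C\|\rho\|_\infty\|v\|_{L^1W^{1,1}}$. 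Next, for smooth $v$ (and arbitrary bounded $\rho$), the commutator converges to $0$ in $L^1$, because the leading term converges to $\rho\,\nabla v:(z\otimes\nabla\eta)$. Since $\int z_i\p_j\eta\,dz=-\delta_{ij}$, this limit cancels exactly against $\rho\,\nabla\cdot v$.

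A density argument in $v$ then concludes: we approximate $v_+$ in $L^1_tW^{1,1}_x$ by smooth fields and use the fact that the commutator is linear in $v$ together with the uniform bound above. The delicate points are that $v_+$ jumps at $t=s$ in time, which is harmless because the mollification is in $x$ only, and that the cancellation is exact. Once the $L^1$ convergence is established, the identity in part 1 follows.
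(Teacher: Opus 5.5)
Your argument is correct and shares the paper's skeleton. The weak form \eqref{LT-weak} is used with test functions that need not vanish near $\partial\Omega$, the mollification is in $x$ only, and your $R^\varepsilon_+$ coincides with the paper's $-v_+\cdot\nabla\rho^\varepsilon_+ +\nabla\cdot(\eta^\varepsilon*(\rho_+v_+))-\eta^\varepsilon*(\rho_+\nabla\cdot v_+)$. The differences are in execution.

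For the identity, the paper translates $\rho_+$, changes variables, and inserts the $y$-dependent family $\eta^\varepsilon(y)\varphi(t,y+z)$ into \eqref{LT-weak} before integrating in $y$. Your single test function $\eta^\varepsilon*\varphi\in C^\infty_0(\mathbb{R}^4)$ does the same job more economically.

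For $\|R^\varepsilon_+\|_{L^1}\to0$, the paper uses no density argument. It writes $v_i(x)-v_i(x-y)=\int_0^1\nabla v_i(x-hy)\cdot y\,dh$ a.e., and uses $\int y_j\,\partial_{x_i}\eta^\varepsilon(y)\,dy=-\delta_{ij}$ to put $\rho_+\nabla\cdot v$ under the same kernel. It then bounds the difference by $\tilde C\sup_{|y|,|y'|<\varepsilon}\|\rho_+\partial_{x_j}v_i-\rho_+(\cdot-y)\,\partial_{x_j}v_i(\cdot-y')\|_{L^1}$, which vanishes by continuity of translations in $L^1$. The leftover term is ordinary mollifier convergence.

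Your route is the textbook DiPerna--Lions scheme: a uniform bound $C\|\rho\|_\infty\|\nabla v\|_{L^1}$ via difference quotients, convergence for smooth fields, and linearity in $v$. It is more modular and transfers verbatim to $L^p$--$L^{p'}$ pairings. The paper's argument is direct and needs no auxiliary smooth approximation of $v_+$, but relies instead on the a.e.\ Taylor identity for $W^{1,1}$ functions.

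One sign needs fixing. With $\nabla\eta^\varepsilon$ evaluated at $x-y$, one has $-v_+(t,x)\cdot\nabla\rho^\varepsilon_+(s,t,x)+\nabla\cdot\bigl(\eta^\varepsilon*(\rho_+v_+)\bigr)(x)=\int\rho_+(s,t,y)\,\bigl(v_+(t,y)-v_+(t,x)\bigr)\cdot\nabla\eta^\varepsilon(x-y)\,dy$. This term together with $-\eta^\varepsilon*((\nabla\cdot v_+)\rho_+)$ is all of $R^\varepsilon_+$, so no further terms are needed. With the sign as you wrote it, the leading term would tend to $-\rho_+\nabla\cdot v_+$ and would double, not cancel, the limit of $-\eta^\varepsilon*((\nabla\cdot v_+)\rho_+)$. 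With the correct sign it tends to $+\rho_+\nabla\cdot v_+$, and the cancellation you describe goes through.
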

\begin{proof}
Take any test function  $\varphi\in C^\infty_0(\R^4)$.
Since $v_+=0$ and $\rho^\ep_+(s,\cdot,\cdot)=\rho_0^\ep$ for all $t<s$, we have 
\begin{align*}
I:=&\int_{s-1}^{\infty}\int_{K}  \Big\{\rho_+^\ep(s,t,x)\p_t\varphi(t,x)+v_+(t,x)\rho_+^\ep(s,t,x)\cdot\nabla\varphi(t,x)+ (\nabla\cdot v_+(t,x))\rho^\ep_+(s,t,x)\varphi(t,x)\Big\}dxdt\\
=&\int_{B_\ep(0)}\Big[\int_{s}^{\infty}\int_{K}  \Big\{\rho_+(s,t,x-y)\p_t\Big(\eta^\ep(y)\varphi(t,x)\Big)+v(t,x)\rho_+(s,t,x-y)\cdot\nabla_x\Big(\eta^\ep(y)\varphi(t,x)\Big)\\
&+(\nabla_x\cdot v(t,x)) \rho_+(s,t,x-y) \Big( \eta^\ep(y)\varphi(t,x) \Big)   \Big\}dxdt
\Big]dy
+\int_{K}   \rho_0^\ep(x) \varphi(s,x)\,dx-\int_{K}   \rho_0^\ep(x)\varphi(s-1,x)\,dx,
\end{align*} 
where $\int_Kdx$ inside $[\,\,\,]$ can be $\int_{\R^3}dx$, because we have $\rho_+(s,t,x-y)=0$ for all $y\in B_\ep(0)$ and $x\in \R^3\setminus K$ due to the choice of $\ep>0$ in \eqref{ne1}.  
By change of the variable $x-y$ into $z$ in $\int_{\R^3}dx$, we have 
\begin{align*}
I=&\uwave{\int_{B_\ep(0)}\Big[\int_{s}^{\infty}\int_{\R^3} \Big\{\rho_+(s,t,z)\p_t\Big(\eta^\ep(y)\varphi(t,y+z)\Big)+v(t,z)\rho_+(s,t,z)\cdot\nabla_z\Big(\eta^\ep(y)\varphi(t,y+z)\Big)}\\
&\uwave{+(\nabla_z\cdot v(t,z)) \rho_+(s,t,z) \Big( \eta^\ep(y)\varphi(t,y+z) \Big)   \Big\}dzdt
\Big]dy}_{\rm (i)}\\
&+ \uwave{\int_{B_\ep(0)}\Big[
\int_{s}^{\infty}\int_{\R^3}\Big\{\big(v(t,y+z)-v(t,z)\big)\rho_+(s,t,z)\cdot\nabla_z\Big(\eta^\ep(y)\varphi(t,y+z)\Big) }\\
&\uwave{+    \big(\nabla_z\cdot v(t,y+z)-\nabla_z\cdot v(t,z)\big) \rho_+(s,t,z) \Big( \eta^\ep(y)\varphi(t,y+z) \Big)    \Big\}dzdt
\Big]dy}_{\rm (ii)}\\
&+\int_{K}   \rho_0^\ep(x) \varphi(s,x)\,dx-\int_{K}   \rho_0^\ep(x)\varphi(s-1,x)\,dx.
\end{align*}
Since $\rho_+(s,\cdot,\cdot)=0$ outside $\Omega$, the integral $\int_{\R^3}dx$ in (i) can be replaced with $\int_{\Omega}dx$; we have $\rho_+(s,t,z)=\rho(s,t,z)$ for $(t,z)\in[s,\infty)\times{\Omega}$ satisfying \eqref{LT} weakly;   
for each fixed $y\in B_\ep(0)$, $\tilde{\varphi}(t,z;y):=\eta^\ep(y)\varphi(t,y+z)$ can be an admissible test function in \eqref{LT-weak}\footnote{This is the reason why  we do not take test functions  as $\varphi\in C^\infty([s,\infty)\times{\Omega} )$ with supp$(\varphi)\subset[s,\infty)\times{\Omega} $ compact  in \eqref{LT-weak}.}. 
Hence, we obtain 
\begin{align*}
{\rm (i)}=& -\int_{B_\ep(0)} \int_{\Omega}  \rho_0(z)\tilde{\varphi}(s,z;y)dzdy= -\int_{B_\ep(0)} \int_{\R^3}  \rho_0(z)\tilde{\varphi}(s,z;y)dzdy,
\end{align*}  
where we recall that $\rho_0\equiv 0$ in $\R^3\setminus\Omega$.
By  \eqref{ne1}, we see that $\rho^\ep_0\equiv 0$ in $\R^3\setminus K$ and 
\begin{align*}
{\rm (i)}
=& -\int_{B_\ep(0)} \int_{\R^3} \rho_0(x-y)\eta^\ep(y)\varphi(s,x)dxdy
= -\int_{\R^3} \rho_0^\ep(x)\varphi(s,x)dx
=-\int_{{K} } \rho_0^\ep(x)\varphi(s,x)dx.
\end{align*}
By change of the variable $y+z$ into $x$,  the term (ii) becomes 
\begin{align*}
{\rm (ii)}=&\int_{B_\ep(0)}\Big[
\int_{s}^{\infty}\int_{\R^3}\Big\{\big(v(t,x)-v(t,x-y)\big)\rho_+(s,t,x-y)\cdot\nabla_x\Big(\eta^\ep(y)\varphi(t,x)\Big) \\
&+ \big(\nabla_x\cdot v(t,x)-\nabla_x\cdot v(t,x-y)\big) \rho_+(s,t,x-y) \Big( \eta^\ep(y)\varphi(t,x) \Big)    \Big\}dxdt
\Big]dy\\
=&\int_{s}^{\infty}\int_{\R^3}\Big\{ \Big(\int_{B_\ep(0)}\rho_+(s,t,x-y) \eta^\ep(y)\,dy\Big) v(t,x)  \cdot\nabla_x\varphi(t,x) \\
& -  \Big(\int_{B_\ep(0)} \rho_+(s,t,x-y) v(t,x-y)\eta^\ep(y)\,dy\Big)  \cdot\nabla_x\varphi(t,x) \Big\}dxdt  \\
&+ \int_{s}^{\infty}\int_{\R^3}\Big\{ \Big(\int_{B_\ep(0)}   \rho_+(s,t,x-y)  \eta^\ep(y)\,dy\Big) (\nabla_x\cdot v(t,x))\varphi(t,x)\\
& -   \Big(\int_{B_\ep(0)}  \rho_+(s,t,x-y)(\nabla_x\cdot v(t,x-y) ) \eta^\ep(y)\,dy \Big)\varphi(t,x)   \Big\}dxdt\\
=&\int_{s}^{\infty}\int_{\R^3}\Big[ 
\rho^\ep_+(s,t,x) v(t,x) \cdot \nabla_x \varphi(t,x) - \{\eta^\ep\ast (\rho_+(s,t,\cdot)v(t,\cdot))\}(x) \cdot \nabla_x \varphi(t,x)
\Big]dxdt \\
&+ \int_{s}^{\infty}\int_{\R^3}\Big[ \rho_+^\ep(s,t,x)  (\nabla_x\cdot v(t,x))\varphi(t,x) -  \{ \eta^\ep \ast (\rho_+(s,t,\cdot)(\nabla_x\cdot v(t,\cdot)))\}(x) \varphi(t,x)   \Big]dxdt.
\end{align*}
Applying integration by parts to the first integral on the right hand side, we obtain 
\begin{align*}
{\rm (ii)}
=&\int_{s}^{\infty}\int_{\R^3}\Big[ 
-\nabla_x \rho^\ep_+(s,t,x) \cdot v(t,x)  \varphi(t,x) + \nabla_x\cdot \{\eta^\ep\ast (\rho_+(s,t,\cdot)v(t,\cdot))\}(x)\varphi(t,x)
\\
&-  \{ \eta^\ep \ast (\rho_+(s,t,\cdot)(\nabla_x\cdot v(t,\cdot)))\}(x) \varphi(t,x)   \Big]dxdt.
\end{align*}
Due to  $v_+\equiv0$ in $(-\infty,s)\times(\R^3\setminus \Omega)$ and \eqref{ne1}, we have 
\begin{align*}
{\rm (ii)}
=&\int_{s-1}^{\infty}\int_{K}\Big[ 
-\nabla_x \rho^\ep_+(s,t,x) \cdot v_+(t,x)  \varphi(t,x) + \nabla_x\cdot \{\eta^\ep\ast (\rho_+(s,t,\cdot)v_+(t,\cdot))\}(x)\varphi(t,x)
\\
&-  \{ \eta^\ep \ast (\rho_+(s,t,\cdot)(\nabla_x\cdot v_+(t,\cdot)))\}(x) \varphi(t,x)   \Big]dxdt. 
\end{align*}
Therefore, we obtain the first assertion with  
\begin{align*}
R_+^\ep(s,t,x):=&  -\nabla_x \rho^\ep_+(s,t,x) \cdot v_+(t,x)  + \nabla_x\cdot \{\eta^\ep\ast (\rho_+(s,t,\cdot)v_+(t,\cdot))\}(x)\\
&-  \{ \eta^\ep \ast (\rho_+(s,t,\cdot)(\nabla\cdot v_+(t,\cdot)))\}(x).
\end{align*}
%
\indent Next, we show that $\norm R^\ep_+(s,\cdot,\cdot)\norm_{L^1([s-1,s+T]\times{K} )}=\norm R^\ep_+(s,\cdot,\cdot)\norm_{L^1([s,s+T]\times{K} )}\to0$  as $\ep\to0$ for each $T>0$. 
Since 
\begin{align*}
&R_+^\ep(s,t,x)=  \uwave{-\nabla_x \rho^\ep_+(s,t,x) \cdot v_+(t,x)  + \nabla_x\cdot \{\eta^\ep\ast (\rho_+(s,t,\cdot)v_+(t,\cdot))\}(x)
-\rho_+(s,t,x)(\nabla\cdot v_+(t,x))}_{\rm (a)}\\
&\quad +\uwave{\rho_+(s,t,x)(\nabla\cdot v_+(t,x)) -  \{ \eta^\ep \ast (\rho_+(s,t,\cdot)(\nabla\cdot v_+(t,\cdot)))\}(x)}_{\rm(b)}
\end{align*}
and $\norm {\rm (b)}\norm_{L^1([s,s+T]\times{K} )}\to0$ as $\ep\to0$, we obtain the assertion by showing $\norm {\rm (a)}\norm_{L^1([s,s+T]\times{K} )}\to0$ as $\ep\to0$.
For a.e. $t\in [s,s+T]$, for which where $v_+=v$, we have 
\begin{align*}
& \!\!\!\!\!-\nabla_x \rho^\ep_+(s,t,x) \cdot v_+(t,x)  + \nabla_x\cdot \{\eta^\ep\ast (\rho_+(s,t,\cdot)v_+(t,\cdot))\}(x)\\
&= \int_{B_\ep(x)} \Big\{ -v(t,x) \rho_+(s,t,y)\cdot\nabla \eta^\ep(x-y) +\rho_+(s,t,y)v(t,y)\cdot\nabla\eta^\ep(x-y)   \Big\} dy\\
&= -\sum_{i=1}^3\int_{B_\ep(0)} \rho_+(s,t,x-y)\{ v_i(t,x) -v_i(t,x-y)\}\p_{x_i}\eta^\ep(y)  dy.
\end{align*}
It follows from the mollification $\{v_i^\nu(t,\cdot)\}_{\nu>0}$ of $v_i(t,\cdot)\in W^{1,1}(\R^3)$ with supp$(v_i(t,\cdot))\subset\overline{\Omega}$ that for any $\psi\in C^\infty_0(K;\R)$,
\begin{align*}
\int_K\{ v_i^\nu(t,x) -v_i^\nu(t,x-y)\}\psi(x)dx=\int_K\int_0^1\nabla v_i^\nu(t,x-hy)\cdot ydh\psi(x)dx;
\end{align*}
sending $\nu\to0+$, we get 
\begin{align*}
\int_K\{ v_i(t,x) -v_i(t,x-y)\}\psi(x)dx=\int_K\int_0^1\nabla v_i(t,x-hy)\cdot ydh\psi(x)dx
\end{align*}
to confirm that 
$$v_i(t,x) -v_i(t,x-y)=\sum_{i,j=1}^3\int_0^1\p_{x_j}v_i(t,x-hy)y_jdh\mbox{ \quad a.e. $x\in\R^3$}.$$
Since $\int_{B_\ep(0)} y_j \p_{x_i}\eta^\ep(y)\,dy=-\delta_{ij}$ (Kronecker's $\delta$), we find that 
\begin{align*}
 \rho_+(s,t,x)(\nabla\cdot v(t,x))  =-\sum_{i,j=1}^3\int_{B_\ep(x)} \rho_+(s,t,x)\p_{x_j} v_i(t,x)y_j \p_{x_i}\eta^\ep(y)  dy.
\end{align*}
Hence, it holds that
\begin{align*}
&\norm {\rm (a)}\norm_{L^1([s,s+T]\times{K} )}
\le\int_s^{s+T}\int_K \int_{B_\ep(0)} 
\Big| \sum_{i,j=1}^3\rho_+(s,t,x)\p_{x_j} v_i(t,x)y_j \p_{x_i}\eta^\ep(y) \\
&\quad\qquad\qquad\qquad\qquad\qquad\qquad -\sum_{i=1}^3 \rho_+(s,t,x-y)\{ v_i(t,x) -v_i(t,x-y)\}\p_{x_i}\eta^\ep(y)  
\Big|dydxdt\\
&= \int_{B_\ep(0)}\int_s^{s+T}\int_K \Big| \sum_{i,j=1}^3\rho_+(s,t,x)\p_{x_j} v_i(t,x)y_j \p_{x_i}\eta^\ep(y) \\
&\quad\qquad\qquad\qquad\qquad\qquad\qquad -\sum_{i=1}^3 \rho_+(s,t,x-y)\{ v_i(t,x) -v_i(t,x-y)\}\p_{x_i}\eta^\ep(y)  
\Big|dydxdt\\
&= \int_{B_\ep(0)}\int_s^{s+T}\int_K \Big| \sum_{i,j=1}^3\rho_+(s,t,x)\p_{x_j} v_i(t,x)y_j \p_{x_i}\eta^\ep(y) \\
&\quad\qquad\qquad\qquad\qquad\qquad\qquad - \rho_+(s,t,x-y)\sum_{i,j=1}^3\int_0^1\p_{x_j}v_i(t,x-hy)y_jdh \p_{x_i}\eta^\ep(y)  
\Big|dydxdt\\
&\le   \sum_{i,j=1}^3 \int_{B_\ep(0)}\int_s^{s+T}\int_K\int_0^1\Big|  \{ \rho_+(s,t,x)\p_{x_j} v_i(t,x)-\rho_+(s,t,x-y)\p_{x_j}v_i(t,x-hy)\}y_j \p_{x_i}\eta^\ep(y)\Big|dhdxdtdy\\
&=  \sum_{i,j=1}^3\int_0^1 \int_{B_\ep(0)}\int_s^{s+T}\int_K\Big|  \{ \rho_+(s,t,x)\p_{x_j} v_i(t,x)-\rho_+(s,t,x-y)\p_{x_j}v_i(t,x-hy)\}y_j \p_{x_i}\eta^\ep(y)\Big|dhdxdtdy.
\end{align*}
Since $|y_j|<\ep$ and  $\int_{B_\ep(0)}    \ep|\nabla\eta^\ep(y)|dy$ is bounded by some constant $\tilde{C}$ independent of $\ep>0$, it holds that 
\begin{align*}
\norm {\rm (a)}\norm_{L^1([s,s+T]\times{K} )}
&\le \sum_{i,j=1}^3 \tilde{C} \sup_{|y|,|y'|<\ep} \norm \rho_+(s,\cdot,\cdot)\p_{x_j} v_i(\cdot,\cdot)-\rho_+(s,\cdot,\cdot-y)\p_{x_j}v_i(\cdot,\cdot-y')\norm_{L^1([s,s+T]\times\R^3)}\\
&\to 0\quad\mbox{ as $\ep\to0$}.
\end{align*}
A similar argument provides 2. 
\end{proof}
We state evolution of the norm of weak solutions. 
\begin{Lemma}\label{LT-norm}
Let $s\in\R$ be arbitrary and $\rho(s,\cdot,\cdot)\in L^\infty(\R\times\Omega)$ be a weak solution of \eqref{LT}. Then, after possible change of the value  of $\norm\rho(s,t,\cdot)\norm_{L^2(\Omega)}$ on a null set of $\R$, it holds that   
\begin{align}
\label{LT-norm2}
\norm \rho(s,t,\cdot)\norm_{L^2(\Omega)}^2
=&\begin{cases}\medskip
&\dis\norm \rho_0\norm_{L^2(\Omega)}^2+\int_s^t \int_\Omega (\nabla\cdot v(r,x))\rho(s,r,x)^2dxdr    \quad \mbox{ for all $t\ge s$,} \\
&\dis\norm \rho_0\norm_{L^2(\Omega)}^2-\int^s_t \int_\Omega (\nabla\cdot v(r,x))\rho(s,r,x)^2dxdr    \quad \mbox{ for all $t\le s$.} 
\end{cases}
\end{align}
\end{Lemma}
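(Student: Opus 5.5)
The plan is to work with the mollified functions $\rho^\ep_+(s,t,\cdot)$ from Lemma \ref{LT-commu}, derive an exact energy identity for them, and pass to the limit $\ep\to0$. We treat $t\ge s$; the case $t\le s$ is symmetric, using part 2 of Lemma \ref{LT-commu} with $\rho^\ep_-$.

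\emph{Step 1: a distributional identity for $\rho^\ep_+$.} Since $\rho^\ep_+(s,t,\cdot)$ is smooth in $x$, part 1 of Lemma \ref{LT-commu} says that, in the sense of distributions on $(s-1,\infty)\times K$,
\begin{align*}
\p_t\rho^\ep_+ + v_+\cdot\nabla\rho^\ep_+ = -R^\ep_+ .
\end{align*}
This is obtained by integrating the $\nabla\varphi$ term by parts in $x$. The boundary term at $t=s-1$ says that $\rho^\ep_+$ attains the value $\rho^\ep_0$ there. In fact $\rho^\ep_+=\rho^\ep_0$ identically on $[s-1,s)$, so no trace issue arises at $t=s$.

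\emph{Step 2: the energy identity for the mollified solution.} Next I derive
\begin{align*}
\norm\rho^\ep_+(s,t,\cdot)\norm^2_{L^2(K)}=\norm\rho^\ep_0\norm^2_{L^2(K)}+\int_{s}^{t}\int_K\Big\{(\nabla\cdot v)(\rho^\ep_+)^2-2R^\ep_+\rho^\ep_+\Big\}dxdr \quad\mbox{for a.e. } t\ge s-1 .
\end{align*}
The hardest technical point is that $\rho^\ep_+$ is smooth only in $x$, not in $t$. I would handle this as follows. The distributional time derivative $\p_t\rho^\ep_+$ equals $-v_+\cdot\nabla\rho^\ep_+-R^\ep_+$. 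This lies in $L^1_{\rm loc}$ in $t$ with values in $L^1(K)$, while $\rho^\ep_+$ is bounded. Hence $t\mapsto\rho^\ep_+(s,t,x)$ is absolutely continuous for a.e. $x$, after choosing a representative. Then the chain rule gives
\begin{align*}
\p_t(\rho^\ep_+)^2 = -v_+\cdot\nabla(\rho^\ep_+)^2-2R^\ep_+\rho^\ep_+ .
\end{align*}
Integrating over $K$ and integrating by parts in $x$ turns the transport term into $\int_K(\nabla\cdot v_+)(\rho^\ep_+)^2dx$; there is no boundary term because $v$ vanishes near $\p K$. Alternatively, one can test the distributional identity with $\varphi(t,x)=\chi(t)\rho^\ep_+(s,t,x)$ after a further mollification in $t$. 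Either route gives the displayed identity on $[s-1,\infty)$, and the contribution from $[s-1,s)$ vanishes.

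\emph{Step 3: passage to the limit $\ep\to0$.} We have $|\rho^\ep_+|\le\beta$. Also $\norm R^\ep_+\norm_{L^1([s-1,s+T]\times K)}\to0$, so $\int\!\!\int|R^\ep_+\rho^\ep_+|\to0$. Moreover $\rho^\ep_+(s,t,\cdot)\to\rho_+(s,t,\cdot)$ in $L^2(K)$ for a.e. $t$, and also in $L^2([s,s+T]\times K)$. By dominated convergence, using $\nabla\cdot v\in L^1_{\rm loc}(\R;L^\infty(\Omega))$, we get
\begin{align*}
\int_s^t\int_K(\nabla\cdot v)(\rho^\ep_+)^2dxdr\to\int_s^t\int_\Omega(\nabla\cdot v)\rho^2dxdr .
\end{align*}
In addition, $\norm\rho^\ep_0\norm_{L^2}\to\norm\rho_0\norm_{L^2(\Omega)}$. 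Hence \eqref{LT-norm2} holds for a.e. $t\ge s$. Its right-hand side is continuous in $t$, so redefining $\norm\rho(s,t,\cdot)\norm_{L^2(\Omega)}$ on the exceptional null set gives the identity for all $t\ge s$.

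The main obstacle is Step 2: justifying the time chain rule for $\rho^\ep_+$, which is only measurable in $t$. This is exactly where the construction of $\rho^\ep_+$ on $[s-1,\infty)$ matters, since it avoids any a priori trace of $\rho$ at $t=s$.
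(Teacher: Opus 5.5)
Your proposal is correct and follows essentially the paper's argument. You mollify via Lemma \ref{LT-commu}, use the weak $t$-derivative $\p_t\rho^\ep_+=-v_+\cdot\nabla\rho^\ep_+-R^\ep_+$ with the a.e. chain rule for $(\rho^\ep_+)^2$, integrate by parts in $x$, and let $\ep\to0$, with $\rho^\ep_+=\rho^\ep_0$ on $[s-1,s)$ supplying the initial value. The differences are cosmetic: you integrate over $K$ (where $v$ vanishes near $\p K$) instead of using the interior cutoffs $\chi^k\in C^\infty_0(\Omega)$, and you integrate in time before sending $\ep\to0$, whereas the paper passes to the limit in the time-weak identity \eqref{kae2} and then invokes absolute continuity.
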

\begin{proof}
It follows from  the assertion 1 of Lemma \ref{LT-commu} that for each $T>0$ and each $\varphi\in C^\infty([s-1,s+T]\times\Omega)$ with supp$(\varphi)\subset(s-1,s+T)\times\Omega$ we have   
\begin{align}\label{giro1}
\int_{s-1}^{s+T}\int_\Omega \rho_+^\ep(s,t,x)\p_t\varphi(t,x) = \int_{s-1}^{s+T}\int_\Omega \{v_+(t,x)\cdot \nabla \rho_+^\ep(s,t,x) +R_+^\ep(s,t,x)\}\varphi(t,x) \,dxdt,
\end{align}
where we note that  now $\nabla \rho_+^\ep$ makes sense. 
Hence, $\rho_+^\ep(s,\cdot,\cdot)$ is weakly $t$-differentiable with 
\begin{align}\label{kae}
\p_t\rho_+^\ep(s,\cdot,\cdot)=-v_+\cdot\nabla\rho_+^\ep(s,\cdot,\cdot)-R_+^\ep(s,\cdot,\cdot)\in L^1([s-1,s+T]\times\Omega).
\end{align}
Then, for a.e. $x\in\Omega$, the function $\rho_+^\ep(s,\cdot,x)$ is differentiable for a.e. $t\in [s-1,s+T]$ in the classical sense; the (classical)  derivative $\frac{\p}{\p t}\rho_+^\ep(s,t,x)$ exists for a.e. $(t,x)\in[s-1,s+T]\times\Omega$ and coincides with the weak $t$-derivative. 
Therefore, we obtain 
\begin{align}\label{aki1}
\p_t\{ \rho_+^\ep(s,t,x)^2\} +v_+(t,x)\cdot\nabla\{\rho_+^\ep(s,t,x)^2\}+&2R_+^\ep(s,t,x)\rho_+^\ep(s,t,x)=0\\\nonumber
&\qquad \mbox{a.e. in $[s-1,s+T]\times\Omega$}.
\end{align}
For each $\varphi\in C^\infty([s-1,s+T]\times\Omega)$ with supp$(\varphi)\subset(s-1,s+T)\times\Omega$, we have   
\begin{align*}
\int_{s-1}^{s+T}\int_\Omega\Big[ \rho_+^\ep(s,t,x)^2 \p_t\varphi(t,x) &- v_+(t,x)\cdot\nabla\{\rho_+^\ep(s,t,x)^2\} \varphi(t,x)\\\nonumber
&-2R_+^\ep(s,t,x)\rho_+^\ep(s,t,x)\varphi(t,x)\Big]dxdt=0. 
\end{align*}
Let $\{\chi^k\}_{k\in\N}\subset C^\infty_0(\Omega)$ be a smooth approximation of the indicator function $\chi_\Omega$ of $\Omega$.  
 Taking $\varphi(t,x)=\chi^k(x)f(t)$ with $f\in C^\infty([s-1,s+T])$ with supp$(f)\subset(s-1,s+T)$, we have 
 \begin{align*}
 \int_{s-1}^{s+T}\int_\Omega \rho_+^\ep(s,t,x)^2 \chi^k (x) f'(t) &- v_+(t,x)\cdot\nabla\{\rho_+^\ep(s,t,x)^2\} \chi^k(x)f(t)\\
&\qquad  -2R_+^\ep(s,t,x)\rho_+^\ep(s,t,x)\chi^k(x)f(t)\,dxdt=0. 
 \end{align*}
 Since $\{\chi^k\}_{k\in\N}$ converges to $\chi_\Omega$ a.e. pointwise, Lebesgue's dominated convergence theorem with $k\to\infty$  yields   
 \begin{align}\label{kae2}
 \int_{s-1}^{s+T}\int_\Omega\Big[ \rho_+^\ep(s,t,x)^2f'(t) &+ (\nabla\cdot v_+(t,x))\{\rho_+^\ep(s,t,x)^2\}f(t)
  -2R_+^\ep(s,t,x)\rho_+^\ep(s,t,x)f(t)\Big]dxdt=0. 
 \end{align}
 since $ \int_\Omega v_+(t,x)\cdot\nabla\{\rho_+^\ep(s,t,x)^2\}dx
 = -\int_\Omega (\nabla\cdot v_+(t,x))\rho_+^\ep(s,t,x)^2dx$ for a.e. $t\in[s-1,s+T]$ and $\rho_+^\ep(s,\cdot,\cdot)\to\rho_+(s,\cdot,\cdot)$ as $\ep\to 0$ a.e. pointwise, as well as $R_+^\ep(s,\cdot,\cdot)\to 0$  as $\ep\to 0$ in $L^1([s-1,s+T]\times\Omega)$, we obtain 
 \begin{align}\label{beta1}
 \int_{s-1}^{s+T}\int_\Omega \rho_+(s,t,x)^2f'(t)dxdt= - \int_{s-1}^{s+T}\Big\{\int_\Omega (\nabla\cdot v_+(t,x))\rho_+(s,t,x)^2dx\Big\} f(t)dt.  
 \end{align} 
 Since $f$ is arbitrary, $\norm \rho_+(s,t,\cdot)\norm_{L^2(\Omega)}^2$ is weakly $t$-differentiable.
 Hence $\norm \rho_+(s,t,\cdot)\norm_{L^2(\Omega)}^2$  is absolutely continuous  on $[s-1,s+T]$ (after possible change of value on a null set).  
 Then, we find that for all $t_0\in[s-1,s)$ and $t\in(s, s+T]$,
 \begin{align*}
 \norm \rho_+(s,t,\cdot)\norm_{L^2(\Omega)}^2 &=\norm \rho_+(s,t_0,\cdot)\norm_{L^2(\Omega)}^2+\int_{t_0}^{t}\int_\Omega (\nabla\cdot v_+(r,x))\rho_+(s,r,x)^2dxdr.
 \end{align*}
Since $\rho_+(s,t,\cdot)=\rho_0$ and $v_+(t,\cdot)=0$ for all $t<s$, we obtain  
 \begin{align*}
 \norm \rho(s,t,\cdot)\norm_{L^2(\Omega)}^2 &=\norm \rho_0\norm_{L^2(\Omega)}^2+\int_{s}^{t}\int_\Omega (\nabla\cdot v(r,x))\rho(s,r,x)^2dxdr.
 \end{align*}
 Since $T>0$ is arbitrary, we conclude the assertion for $t\ge s$.  
 The case $ t\le s$ can be treated in the same way.
\end{proof}
\begin{Prop}\label{LT-unique}
For each $s\in\R$ and $\rho_0\in L^\infty(\Omega)$, a weak solution $\rho(s,\cdot,\cdot)$ of  \eqref{LT} is unique in $L^\infty(\R\times\Omega)$.  
Furthermore, after possible change of the values of $\rho(s,\cdot,\cdot)$ on a null set of $\R\times\Omega$, $\rho(s,\cdot,\cdot)$ belongs to $C^0(\R;L^2(\Omega))$ with $\rho(s,s,\cdot)=\rho_0$\footnote{From now on,  we mean by a weak solution $\rho(s,\cdot,\cdot)$ of \eqref{LT} this  representative. Hence, $\rho(s,t,\cdot)$ has a value in $L^2(\Omega)$ for every $t\in\R$.}.
\end{Prop}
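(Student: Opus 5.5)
Uniqueness follows from linearity together with Lemma \ref{LT-norm}. Let $\rho$ and $\tilde\rho$ be two weak solutions in $L^\infty(\R\times\Omega)$ with the same $\rho_0$. Definition 2 is linear in the solution and in $\rho_0$, so $u:=\rho-\tilde\rho$ is a weak solution of \eqref{LT} with zero initial data. Lemma \ref{LT-norm} applied to $u$ gives, for a.e.\ $t\ge s$,
$$\norm u(s,t,\cdot)\norm_{L^2(\Omega)}^2=\int_s^t\int_\Omega(\nabla\cdot v(r,x))u(s,r,x)^2dxdr\le\int_s^t\norm\nabla\cdot v(r,\cdot)\norm_{L^\infty(\Omega)}\norm u(s,r,\cdot)\norm_{L^2(\Omega)}^2dr.$$
Since $\nabla\cdot v\in L^1_{\rm loc}(\R;L^\infty(\Omega))$, Gronwall's inequality in integral form applies (the left side is absolutely continuous after modification on a null set). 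It gives $\norm u(s,t,\cdot)\norm_{L^2(\Omega)}=0$ for a.e.\ $t\ge s$. The case $t\le s$ is symmetric, using the second line of \eqref{LT-norm2} and Gronwall backward in time. Hence $u=0$ a.e.\ in $\R\times\Omega$.

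For continuity, I would first show weak continuity and then upgrade it to strong continuity through norm continuity. The weak formulation \eqref{LT-weak}, tested with $\varphi(t,x)=f(t)\psi(x)$ for $\psi\in C^\infty_0(\R^3)$, shows that $t\mapsto\int_\Omega\rho(s,t,x)\psi(x)dx$ has weak derivative in $L^1_{\rm loc}$. So this map agrees a.e.\ with an absolutely continuous function on $[s,\infty)$, and on $(-\infty,s]$ by \eqref{LT-weak2}. Letting $f$ approximate the indicator of $[s,t]$, its limit at $t=s$ is $\int_\Omega\rho_0\psi\,dx$ from both sides. Using a countable dense set of $\psi$ and the uniform bound $\norm\rho(s,t,\cdot)\norm_{L^\infty}\le C$ for a.e.\ $t$, I would redefine $\rho(s,t,\cdot)$ on a null set of times. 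The result is a representative $t\mapsto\rho(s,t,\cdot)$ that is weakly continuous in $L^2(\Omega)$ on all of $\R$, with $\rho(s,s,\cdot)=\rho_0$. Concretely, at each remaining time one takes the weak limit along good times, which exists by boundedness and is unique by density. The modified function is still measurable in $(t,x)$ and equals the original a.e., so it is the same weak solution.

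Strong continuity then follows from continuity of the norm. By Lemma \ref{LT-norm}, $\norm\rho(s,t,\cdot)\norm^2_{L^2(\Omega)}$ agrees a.e.\ with the continuous function $N(t)$ given by the right side of \eqref{LT-norm2}, and $N(s)=\norm\rho_0\norm^2_{L^2(\Omega)}$. The step I expect to be the main obstacle is showing that $\norm\rho(s,t,\cdot)\norm^2_{L^2(\Omega)}=N(t)$ for every $t$, not just a.e. Weak lower semicontinuity only gives $\norm\rho(s,t,\cdot)\norm^2\le N(t)$ at exceptional times. For the reverse inequality I would rerun the argument of Lemma \ref{LT-norm} with the starting time moved to an arbitrary $t_0$. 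The weak solution restricted to $t\ge t_0$ solves \eqref{LT} from the initial datum $\rho(s,t_0,\cdot)$; this follows from the weak continuity just established, by taking $f$ converging to the indicator of $[t_0,\infty)$. Hence Lemma \ref{LT-norm} with initial time $t_0$ gives
$$\norm\rho(s,t,\cdot)\norm^2_{L^2(\Omega)}=\norm\rho(s,t_0,\cdot)\norm^2_{L^2(\Omega)}+\int_{t_0}^t\int_\Omega(\nabla\cdot v)\rho^2\,dxdr\quad\text{for a.e. } t>t_0.$$
Comparing with $N$ along a.e.\ $t$ then forces $\norm\rho(s,t_0,\cdot)\norm^2_{L^2(\Omega)}=N(t_0)$. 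With $t\mapsto\rho(s,t,\cdot)$ weakly continuous in $L^2(\Omega)$ and $t\mapsto\norm\rho(s,t,\cdot)\norm_{L^2(\Omega)}$ continuous, we get $\rho(s,t',\cdot)\to\rho(s,t,\cdot)$ strongly in $L^2(\Omega)$ as $t'\to t$. Therefore $\rho(s,\cdot,\cdot)\in C^0(\R;L^2(\Omega))$ with $\rho(s,s,\cdot)=\rho_0$.
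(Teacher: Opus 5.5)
Your proof is correct. Uniqueness is handled exactly as in the paper; for the continuous representative you take a genuinely different route.

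The paper never passes through weak continuity. By \eqref{kae}, each mollification $\rho_+^\ep(s,\cdot,\cdot)$ has a representative in $C^0(\R;L^2(\Omega))$. An energy identity for the difference $\rho_+^{\tilde\ep}-\rho_+^\ep$, with the commutators $R_+^{\tilde\ep},R_+^\ep$ as sources, plus Gronwall shows that $\{\rho_+^\ep\}$ is Cauchy in $C^0([s,s+T];L^2(\Omega))$. The limit is the continuous representative, and $\rho(s,s,\cdot)=\lim_{\ep\to0}\rho_0^\ep=\rho_0$ comes out automatically.

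You instead use the classical scheme: weak continuity read off from the equation, plus continuity of the norm, gives strong continuity. Your restart step is what handles the exceptional null set of times that a.e. information cannot reach: you apply Lemma \ref{LT-norm} with initial time $t_0$ and datum $\rho(s,t_0,\cdot)$, which rules out an energy jump at $t_0$. The step is sound, with three points worth making explicit:
\begin{itemize}
\item the datum lies in $L^\infty(\Omega)$, because the bound survives weak limits;
\item the cut-off argument works for general $\varphi\in C^\infty_0(\R^4)$, since $t\mapsto\int_\Omega\rho(s,t,x)\varphi(t,x)dx$ is continuous by weak continuity;
\item combining with the identities at $s$ also yields \eqref{LT-weak2} at $t_0$, so Lemma \ref{LT-norm} applies literally.
\end{itemize}
In fact the restart alone already gives $\|\rho(s,t_0,\cdot)\|^2_{L^2(\Omega)}=N(t_0)$ at every $t_0$, so your lower-semicontinuity step is redundant.

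Your route is more modular: it uses Lemma \ref{LT-norm} as a black box rather than redoing the commutator estimates for differences. The paper's route buys a by-product, \eqref{kuri}, the uniform-in-time convergence of the mollifications. This is used later in Proposition \ref{LT-L2converge}, in \eqref{sop1} and \eqref{well}. With your approach you would recover it separately. For example, once continuity is known, $\{\rho(s,t,\cdot)\}_{t\in[s,s+T]}$ is compact in $L^2$, and mollification converges uniformly on compact sets.
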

\begin{proof}
Let $\rho(s,\cdot,\cdot),\tilde{\rho}(s,\cdot,\cdot)\in  L^\infty(\R\times\Omega)$ be two weak solutions of \eqref{LT}. Then, $u(s,\cdot,\cdot):=\rho(s,\cdot,\cdot)-\tilde{\rho}(s,\cdot,\cdot)\in  L^\infty(\R\times\Omega)$ is a weak solution of  \eqref{LT}$|_{\rho_0=0}$. 
Due to  $L_{\rm loc}(\R;L^\infty(\Omega))$-regularity of $\nabla\cdot v$, Gronwall's inequality applied to the equality \eqref{LT-norm2}$|_{\rho=u}$ confirms that $u=0$ and the uniqueness is proven.   

Let $T>0$ be arbitrary. Due to \eqref{kae} and $\rho_+^\ep(s,t,\cdot)=\rho_0^\ep$ for all $t<s$, it holds that for a.e. $(t,x)\in[s-1,s+T]\times\Omega$, 
$$\rho_+^\ep(s,t,x)=\rho_0^\ep(x)+\int_{s}^t \{-v_+(r,x)\cdot\nabla\rho_+^\ep(s,r,x)-R_+^\ep(s,r,x)\}dr=:f^\ep(t,x),$$
where $f^\ep$ belongs to $C^0(\R;L^1(\Omega))$. 
Hence, there exists a null set $N\subset[s-1,s+T]$ such that $\rho_+^\ep(s,t,\cdot)$ is equal to $f^\ep(t,\cdot)$ for all $t\in[s-1,s+T]\setminus N$.  
Changing the value of $\rho_+^\ep(s,\cdot,\cdot)$ to be $f^\ep$ on the null set $N\times\Omega$,  we identify $\rho_+^\ep(s,\cdot,\cdot)$ as an element of $C^0(\R;L^1(\Omega))$.  
Then, we see that $\rho_+^\ep(s,\cdot,\cdot)$ actually belongs to $C^0(\R;L^2(\Omega))$, because as $h\to 0$,  
\begin{align*}
\norm \rho_+^\ep(s,t+h,\cdot)-\rho_+^\ep(s,t,\cdot)\norm_{L^2(\Omega)}^2
\le 2\norm  \rho_+^\ep(s,\cdot,\cdot)\norm_{L^\infty(\Omega)} \norm \rho_+^\ep(s,t+h,\cdot)-\rho_+^\ep(s,t,\cdot)\norm_{L^1(\Omega)}\to0.
\end{align*}
Next we show that $\{\rho_+^\ep(s,\cdot,\cdot)_{t\in[s,s+T]}\}_{\ep>0}$ is a Cauchy sequence in   $C^0([s,s+T];L^2(\Omega))$. For this purpose, take any $\ep,\tilde{\ep}>0$ and apply \eqref{giro1} to $\rho_+^\ep(s,\cdot,\cdot)$ and $\rho_+^{\tilde{\ep}}(s,\cdot,\cdot)$.  
Then, the reasoning from \eqref{giro1} to \eqref{kae2} yields  
\begin{align*}
 &\int_{s-1}^{s+T}\int_\Omega\Big[\{\rho_+^{\tilde{\ep}}(s,t,x)-\rho_+^\ep(s,t,x)\}^2f'(t) + (\nabla\cdot v_+(t,x))\{\rho_+^{\tilde{\ep}}(s,t,x)-\rho_+^\ep(s,t,x)\}^2f(t)\\
& \qquad  -2\{R_+^{\tilde{\ep}}(s,t,x)-R_+^\ep(s,t,x)\}\{\rho_+^{\tilde{\ep}}(s,t,x)-\rho_+^\ep(s,t,x)\}f(t)\Big]dxdt=0. 
 \end{align*}
Hence, we see that $\norm \rho_+^{\tilde{\ep}}(s,t,\cdot)-\rho_+^\ep(s,t,\cdot)\norm_{L^2(\Omega)}^2$ (we know that it is continuous) is weakly $t$-differentiable, which leads to  
\begin{align*}
&\norm \rho_+^{\tilde{\ep}}(s,t,\cdot)-\rho_+^\ep(s,t,\cdot)\norm_{L^2(\Omega)}^2
=\norm \rho_0^{\tilde{\ep}}-\rho_0^\ep\norm_{L^2(\Omega)}^2
+\int_{s}^t\int_\Omega   \Big[
 (\nabla\cdot v_+(t,x))\{\rho_+^{\tilde{\ep}}(s,r,x)-\rho_+^\ep(s,r,x)\}^2\\
 &\quad -2\{R_+^{\tilde{\ep}}(s,r,x)-R_+^\ep(s,r,x)\}\{\rho_+^{\tilde{\ep}}(s,r,x)-\rho_+^\ep(s,r,x)\}
\Big]dxdr \quad \mbox{for every $t\in[s,s+T]$.}
\end{align*}
Let $\nu>0$ be arbitrary. 
Since $R_+^{\tilde{\ep}}(s,\cdot,\cdot),R_+^\ep(s,\cdot,\cdot)$ converges to $0$ as $\ep,\tilde{\ep}\to0$ in the $L^1$-norm, we find $\ep_\nu>0$ such that if $\ep,\tilde{\ep}\in(0,\ep_\nu)$ it holds that 
\begin{align}\label{kita}
\sup_{t\in[s,s+T]}\int_{s}^{t}\int_{\Omega}|2\{R_+^{\tilde{\ep}}(s,r,x)-R_+^\ep(s,r,x)\}\{\rho_+^{\tilde{\ep}}(s,r,x)-\rho_+^\ep(s,r,x)\}|dxdr<\nu,
\end{align}
as well as $\norm \rho_0^{\tilde{\ep}}-\rho_0^\ep\norm_{L^2(\Omega)}^2<\nu$.  
Therefore, Gronwall's inequality implies that for all $\ep,\tilde{\ep}\in(0,\ep_\nu)$,
\begin{align*}
\sup_{t\in[s,s+T]}\norm \rho_+^{\tilde{\ep}}(s,t,\cdot)-\rho_+^\ep(s,t,\cdot)\norm_{L^2(\Omega)}^2
\le 2\nu e^{\norm \nabla\cdot v\norm_{L^1([s,s+T];L^\infty(\Omega))}}.
\end{align*}
\indent Let $\tilde{\rho}(s,\cdot,\cdot)\in C^0([s,s+T];L^2(\Omega))$ be the limit of $\{\rho_+^\ep(s,\cdot,\cdot)|_{t\in[s,s+T]}\}_{\ep>0}$. 
Since $\rho_+^\ep(s,\cdot,\cdot)|_{t\in[s,s+T]}\to \rho(s,\cdot,\cdot)|_{t\in[s,s+T]}$ in $L^2([s,s+T]\times\Omega)$ as $\ep\to 0$, there exists a null set $N\subset[s,s+T]$ such that $\rho_+^\ep(s,t,\cdot)\to \rho(s,t,\cdot)$ in $L^2(\Omega)$ as $\ep\to0$ for all $t\in[s-1,s+T]\setminus N$.   
Changing the value of $\rho(s,\cdot,\cdot)$ to be $\tilde{\rho}(s,\cdot,\cdot)$ on the null set $N\times\Omega$,  we  identify $\rho(s,\cdot,\cdot)$ with $\tilde{\rho}(s,\cdot,\cdot)$ to be an element of $C^0([s,s+T];L^2(\Omega))$, confirming  
\begin{align}\label{kuri}
\sup_{t\in[s,s+T]}\norm\rho_+^\ep(s,t,\cdot)-\rho(s,t,\cdot)\norm_{L^2(\Omega)} \to0\quad\mbox{ as $\ep\to0$}.  
\end{align}
Since $T>0$ is arbitrary, we conclude the proof together with the same reasoning for $[s-T,s]$  with $\rho_-^\ep(s,\cdot,\cdot)$. 
\end{proof}
We will have a closer look at the weak convergence of approximate solutions $\rho^k(s,\cdot,\cdot)$ that were used in Proposition \ref{LT-existence} to construct a weak solution of \eqref{LT}.  
\begin{Prop}\label{LT-L2converge}
Let $s\in\R$ be arbitrary. Let $\{\rho^{k}(s,\cdot,\cdot)\}_{k\in\N}$, $\rho^{k}(s,t,x)=\rho^k_0(X^k(s,t,x))$ be the sequence of the approximate solution defined in \eqref{LT-approx} with $\rho^k_0$ given as either (I1) or (I2). 
Let   $\rho(s,\cdot,\cdot)$ be a unique weak solution of \eqref{LT} that is identified with an element of $C^0(\R;L^2(\Omega))$.  
Then, it hold that 
\begin{align*}
\sup_{t\in[T_0,T_1]}\norm \rho^{k}(s,t,\cdot)- \rho(s,t,\cdot)\norm_{L^2(\Omega)}\to0\quad \mbox{ as $k\to\infty$, \,\,\,\, for each $T_0<T_1$.} 
\end{align*} 
\end{Prop}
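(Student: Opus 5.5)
The plan is to upgrade the weak convergence from Proposition \ref{LT-existence} to strong, uniform-in-time convergence. The idea is to compare $L^2$ norms: if $\rho^k(s,t,\cdot)\wto\rho(s,t,\cdot)$ and $\norm\rho^k(s,t,\cdot)\norm\to\norm\rho(s,t,\cdot)\norm$, then strong convergence follows. I would work on $K$ rather than $\Omega$, since $\rho^k$ lives on $\R^3$ and leaks out of $\Omega$. By uniqueness (Proposition \ref{LT-unique}), every subsequence of $\{\rho^k(s,\cdot,\cdot)\}$ has, by the diagonal argument of Proposition \ref{LT-existence}, a further subsequence converging weakly in $L^2([T_0,T_1]\times K)$. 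The limit is the weak solution $\rho$ on $\Omega$ and equals $\rho_0$ (zero-extended under (I2), or $f$ under (I1)) on $K\setminus\overline\Omega$. Hence the whole sequence converges weakly, and the limit on $K$ is fixed.

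\textbf{Step 1 (norm identities).} The classical flow satisfies $\rho^k(s,t,\cdot)^2=(\rho_0^k)^2\circ X^k(s,t,\cdot)$. Proposition \ref{classical2} applied with $f=(\rho^k_0)^2$ gives
\[
\norm\rho^k(s,t,\cdot)\norm_{L^2(K)}^2=\norm\rho^k_0\norm_{L^2(K)}^2+\int_s^t\!\!\int_K(\nabla\cdot v^k)\rho^k(s,r,x)^2dxdr .
\]
On the limit side, Lemma \ref{LT-norm} gives the analogous identity on $\Omega$ with $\nabla\cdot v$. Adding the constant part on $K\setminus\overline\Omega$, and using $\meas(\p\Omega)=0$, this extends to $K$.

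\textbf{Step 2 (strong $L^2$ convergence in space-time).} I would compute $\norm\rho^k-\rho\norm^2_{L^2(K)}(t)$ directly rather than only comparing norms. The cross term $\int\rho^k\rho$ is handled by weak convergence after integrating in $t$ against test functions. The main obstacle is the term $\int(\nabla\cdot v^k)(\rho^k)^2$. Here $\nabla\cdot v^k\to\nabla\cdot v$ only in $L^1$, while $(\rho^k)^2$ converges only weakly, so the product cannot be passed to the limit directly. I would instead set up a Gronwall inequality for $e^k(t):=\norm\rho^k(s,t,\cdot)-\rho(s,t,\cdot)\norm^2$. Writing $\rho$ through its mollification $\rho^\ep_\pm$ from Lemma \ref{LT-commu} (which solves the transport equation with error $R^\ep_\pm$), the smooth equation for $\rho^k-\rho^\ep_+$ gives
\[
\frac{d}{dt}\norm\rho^k-\rho^\ep_+\norm^2 \le \norm\nabla\cdot v^k\norm_{L^\infty}\norm\rho^k-\rho^\ep_+\norm^2+C\big(\norm (v^k-v)\cdot\nabla\rho^\ep_+\norm_{L^1}+\norm R^\ep_+\norm_{L^1}\big).
\]
Note that $\norm\nabla\cdot v^k(t)\norm_{L^\infty(K)}\le$ the mollified $\norm\nabla\cdot v\norm_{L^\infty}$ in time, which is uniformly integrable. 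Note also that $\nabla\cdot v$ of the zero extension carries no boundary mass, because $v(t,\cdot)\in W^{1,1}_0$.

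\textbf{Step 3 (choice of parameters).} For fixed $\ep$, the term $\norm (v^k-v)\cdot\nabla\rho^\ep_+\norm_{L^1}$ tends to $0$ as $k\to\infty$ because $\nabla\rho^\ep_+$ is bounded. The term $\norm R^\ep_+\norm_{L^1}$ tends to $0$ as $\ep\to0$. The initial error $\norm\rho^k_0-\rho^\ep_0\norm$ tends to $0$ as $k\to\infty$ and then $\ep\to0$. Gronwall then yields $\sup_{t}\norm\rho^k-\rho^\ep_+\norm\to0$ in the iterated limit. Combining this with \eqref{kuri} gives the claim on $[s,T_1]$, and the case $t\le s$ is treated the same way with $\rho^\ep_-$.

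The delicate point is that $X^k$ leaks out of $\Omega$, so all estimates must be carried out on $K$. Under (I1) the constant extension of $\rho$ by $f$ has a jump across $\p\Omega$, so $\nabla\rho^\ep_+$ is not uniformly bounded near $\p\Omega$. This is harmless for the $(v^k-v)$ term as long as $\ep$ is fixed while $k\to\infty$.
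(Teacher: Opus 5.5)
Your Steps 2--3 follow the paper's argument at their core. You compare $\rho^k$ with the mollified weak solution $\rho^\ep_+$ and use the remainder $R^\ep_+$ of Lemma \ref{LT-commu} to get an energy identity for $\rho^k-\rho^\ep_+$ on $K$. You then let $k\to\infty$ with $\ep$ fixed, so the $(v^k-v)\cdot\nabla\rho^\ep_+$ and $\nabla\cdot v^k-\nabla\cdot v$ contributions vanish, and finally let $\ep\to0$ and conclude with \eqref{kuri}. Your weak-compactness paragraph and Step 1 are never used.

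You differ from the paper in how you treat the leakage region $K\setminus\Omega$, and there, as written, Step 3 fails under (I1). The functions $\rho^\ep_\pm$ and $\rho^\ep_0$ of Lemma \ref{LT-commu} are built from the \emph{zero} extensions of $\rho$ and $\rho_0$. With $\rho^k_0=f$, the initial error $\norm\rho^k_0-\rho^\ep_0\norm_{L^2(K)}^2$ therefore tends to $\int_{K\setminus\Omega}f^2dx$, not to $0$. Your Gronwall factor $e^{\int\norm\nabla\cdot v^k\norm_{L^\infty(K)}dr}$ multiplies the whole $L^2(K)$ error, so you are left with only an $O(1)$ bound on $\Omega$.

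The paper keeps the zero extension and handles exactly this point. It splits $\nabla\cdot v^k=\nabla\cdot v+(\nabla\cdot v^k-\nabla\cdot v)$ in \eqref{kips}, so the Gronwall term only sees $\Omega$ (since $\nabla\cdot v=0$ off $\Omega$). It then cancels the nonvanishing term $\int_{K\setminus\Omega}(\rho^k_0)^2dx$ from both sides via \eqref{sop1}--\eqref{sop4}. Of these, \eqref{sop2} needs a separate contradiction argument using that ${\rm supp}(v^k)$ shrinks to $\overline\Omega$.

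Your last paragraph suggests you actually intend to mollify the extension $\bar\rho$ of $\rho$ by $f$ on $\R^3\setminus\Omega$; under (I2) the zero extension already works for you. That choice repairs the argument and gives a cleaner route, but you must say so and recheck the two ingredients you borrowed, because they were proved for a different function.

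First, $\bar\rho$ solves the weak equation on all of $\R^3$ with datum $f$. To see this, add the trivial identity for the time-independent $f$ on the region where $v$ vanishes, and where $\nabla\cdot v$ vanishes by $W^{1,1}_0$. The proof of Lemma \ref{LT-commu} then carries over, since the commutator estimate uses only $\bar\rho\in L^\infty_{\rm loc}$ and $v(t,\cdot)\in W^{1,1}(\R^3)$. The jump of $\bar\rho$ across $\p\Omega$ is therefore harmless. Your worry about $\nabla\bar\rho^\ep$ is moot anyway: for fixed $\ep$ one has $|\nabla\bar\rho^\ep|\le\norm\bar\rho\norm_{L^\infty}\norm\nabla\eta^\ep\norm_{L^1(\R^3)}$ everywhere, and no $\ep$-uniform bound holds even inside $\Omega$.

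Second, in place of \eqref{kuri} you need $\sup_t\norm\bar\rho^\ep(s,t,\cdot)-\rho(s,t,\cdot)\norm_{L^2(\Omega)}\to0$. This follows from \eqref{kuri}, because $\bar\rho^\ep-\rho^\ep_+=\eta^\ep\ast(f\chi_{\R^3\setminus\Omega})$ is bounded and vanishes at points of $\Omega$ farther than $\ep$ from $\p\Omega$.

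With these checks, the initial error becomes $\norm f-\eta^\ep\ast f\norm_{L^2(K)}\to0$. The Gronwall factor is bounded uniformly in $k$ by the paper's footnote estimate $\norm\nabla\cdot v^k\norm_{L^1([t_1,t_2];L^\infty(K))}\le\norm\nabla\cdot v\norm_{L^1([t_1-\ep_k,t_2+\ep_k];L^\infty(K))}$. Gronwall on $K$ then closes the argument with no exterior bookkeeping.

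In short, the paper uses Lemma \ref{LT-commu} and \eqref{kuri} exactly as proved and pays for it with the cancellation argument \eqref{sop1}--\eqref{sop4}. Your route avoids that argument, including \eqref{sop2}, at the price of re-running the commutator lemma for an extension adapted to (I1).
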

\begin{proof}  
Recall the definition of $\rho_\pm$, $\rho_\pm^\ep$ and $\rho^\ep_0$ stated just before  Lemma \ref{LT-commu} with the representative $\rho$ found in Proposition \ref{LT-unique}, where $\rho$ is extended to be $0$ outside $\Omega$.  
Let $s\in\R$ be arbitrary. For each $T>0$ and each $\varphi\in C^\infty([s,s+T]\times K)$ with supp$(\varphi)\subset(s,s+T)\times K$, we have   
\begin{align*}
0=\int_{s}^{s+T}\int_K \Big\{\rho^k(s,t,x)\p_t\varphi(t,x) - v^k(t,x)\cdot \nabla \rho^k(s,t,x)\varphi(t,x) \Big\}dxdt, 
\end{align*}
and hence due to Lemma \ref{LT-commu} or \eqref{giro1} with $\rho^\ep(s,\cdot,\cdot):=\rho_+^\ep(s,\cdot,\cdot)$ for $t\in[s,s+T]$  (recall that $v_+=v$ within $[s,s+T]$),  
\begin{align*}
&\int_{s}^{s+T}\int_K (\rho^k(s,t,x)-\rho^\ep(s,t,x))\p_t\varphi(t,x)dxdt \\
&\qquad = \int_{s}^{s+T}\int_K \{v^k(t,x)\cdot \nabla \rho^k(s,t,x)-v(t,x)\cdot \nabla \rho^\ep(s,t,x) -R_+^\ep(s,t,x)\}\varphi(t,x) \,dxdt.
\end{align*}
Hence, $\rho^k(s,\cdot,\cdot)-\rho^\ep(s,\cdot,\cdot)$ is weakly $t$-differentiable with the weak derivative belonging to $ L^1([s,s+T]\times K)$: 
$$\p_t\{\rho^k(s,t,x)-\rho^\ep(s,t,x)\}=-\{v^k(t,x)\cdot \nabla \rho^k(s,t,x)-v(t,x)\cdot \nabla \rho^\ep(s,t,x) -R_+^\ep(s,t,x)\}.$$
Then, for a.e. $x\in K$, the function $\rho^k(s,\cdot,x)-\rho^\ep(s,\cdot,x)$ is differentiable for a.e. $t\in [s,s+T]$ in the classical sense; the (classical)  derivative $\frac{\p}{\p t}(\rho^k(s,t,x)-\rho^\ep(s,t,x))$ exists for a.e. $(t,x)\in[s,s+T]\times K$ and coincides with the weak $t$-derivative. 
Therefore, we have a.e. in $[s,s+T]\times K$,
\begin{align*}
&\p_t\{ (\rho^k(s,t,x)-\rho^\ep(s,t,x))^2\} +v^k(t,x)\cdot\nabla\{ (\rho^k(s,t,x)-\rho^\ep(s,t,x))^2\}\\
&\quad +2\{(v^k(t,x)-v(t,x))\cdot \nabla \rho^\ep(s,t,x)   -R_+^\ep(s,t,x) \}(\rho^k(s,t,x)-\rho^\ep(s,t,x))=0.
\end{align*}
For each $\varphi\in C^\infty([s,s+T]\times K)$ with supp$(\varphi)\subset(s,s+T)\times K$, we have   
\begin{align*}
&\int_{s}^{s+T}\int_K\Big[ (\rho^k(s,t,x)-\rho^\ep(s,t,x))^2 \p_t\varphi(t,x) - v^k(t,x)\cdot\nabla\{(\rho^k(s,t,x)-\rho^\ep(s,t,x))^2\} \varphi(t,x)\\\nonumber
&-2\{(v^k(t,x)-v(t,x))\cdot \nabla \rho^\ep(s,t,x)   -R_+^\ep(s,t,x) \}(\rho^k(s,t,x)-\rho^\ep(s,t,x))\varphi(t,x)\Big]dxdt=0. 
\end{align*}
Let $\{\chi^k\}_{k\in\N}\subset C^\infty_0(K)$ be a smooth approximation of $\chi_K$ ( the indicator function of $K$) in $L^1(K)$.  
 Taking $\varphi(t,x)=\chi^k(x)f(t)$ with $f\in C^\infty([s,s+T])$ with supp$(f)\subset(s,s+T)$ and sending $k\to\infty$, we have 
\begin{align*}
&\int_{s}^{s+T}\int_K\Big[ (\rho^k(s,t,x)-\rho^\ep(s,t,x))^2 f'(t) - v^k(t,x)\cdot\nabla\{(\rho^k(s,t,x)-\rho^\ep(s,t,x))^2\} f(t)\\\nonumber
&-2\{(v^k(t,x)-v(t,x))\cdot \nabla \rho^\ep(s,t,x)   -R_+^\ep(s,t,x) \}(\rho^k(s,t,x)-\rho^\ep(s,t,x))f(t)\Big]dxdt=0, 
\end{align*}
which leads to 
\begin{align*}
&\int_{s}^{s+T}\Big[\int_K (\rho^k(s,t,x)-\rho^\ep(s,t,x))^2dx\Big] f'(t)dt   = -\int_{s}^{s+T}\int_K\Big[ (\nabla\cdot v^k(t,x))\{(\rho^k(s,t,x)-\rho^\ep(s,t,x))^2\} \\
&\quad -2\{(v^k(t,x)-v(t,x))\cdot \nabla \rho^\ep(s,t,x)   -R_+^\ep(s,t,x) \}(\rho^k(s,t,x)-\rho^\ep(s,t,x))\Big]f(t)dxdt.  
\end{align*}
Hence, $\norm \rho^k(s,t,\cdot)-\rho^\ep(s,t,\cdot)\norm_{L^2(K)}^2$ is weakly $t$-differentiable, and absolutely continuous  on $[s,s+T]$, where we already know that  $\norm \rho^k(s,s,\cdot)-\rho^\ep(s,s,\cdot)\norm_{L^2(K)}^2=\norm \rho_0^k-\rho_0^\ep\norm_{L^2(K)}^2 $ due to Proposition \ref{LT-unique}:  
 \begin{align}\label{kips}
&\norm \rho^k(s,t,\cdot)-\rho^\ep(s,t,\cdot)\norm_{L^2(K)}^2
\\\nonumber
&\quad =\norm \rho_0^k-\rho_0^\ep\norm_{L^2(K)}^2 
+\int_{s}^t\int_K (\nabla\cdot v(r,x))(\rho^k(s,r,x)-\rho^\ep(s,r,x))^2\,dxdr \\\nonumber
&\quad\quad+\int_{s}^t\int_K (\nabla\cdot v^k(r,x)-\nabla\cdot v(r,x))(\rho^k(s,r,x)-\rho^\ep(s,r,x))^2\,dxdr\\\nonumber
&\quad \quad -2\int_{s}^t\int_K(v^k(r,x)-v(r,x))\cdot (\nabla \rho^\ep(s,r,x)) (\rho^k(s,r,x)-\rho^\ep(s,r,x))dxdr\\\nonumber
&\quad \quad +2\int_{s}^t\int_KR_+^\ep(s,r,x) (\rho^k(s,r,x)-\rho^\ep(s,r,x))dxdr\quad\mbox{ for all  $t\in[s, s+T]$}.
 \end{align}
\indent In view of  $\rho(s,\cdot,\cdot)\equiv0$ outside $\Omega$  and \eqref{kuri}, we have 
\begin{align}\label{sop1}
\sup_{t\in[s,s+T]}\norm \rho^\ep(s,t,\cdot)\norm_{L^2(K\setminus\Omega)}\le \sup_{t\in[s,s+T]}\norm \rho^\ep(s,t,\cdot)-\rho(s,t,\cdot)\norm_{L^2(K)}\to0\quad\mbox{as $\ep\to0$}. 
\end{align}
Similarly, we have\footnote{\eqref{sop2} is true in both the cases (I1), (I2) to define  $\rho^k_0$ (see the beginning of Section 3).} 
\begin{align}\label{sop2}
\sup_{t\in[s,s+T]}\Big|\norm \rho^k(s,t,\cdot)\norm_{L^2(K\setminus\Omega)}-\norm \rho_0^k\norm_{L^2(K\setminus\Omega)}\Big|\to0\quad\mbox{as $k\to\infty$}.
\end{align}
In fact, if not, we find $\theta>0$ and $k_j$, $t_j\in[s,T]$ for each $j\in\N$ such that  $k_j\to\infty$, $t_j\to t_\ast\in[s,s+T]$ as $j\to\infty$ and  $\theta\le|\norm \rho^{k_j}(s,t_j,\cdot)\norm_{L^2(K\setminus\Omega)}-\norm \rho_0^{k_j}\norm_{L^2(K\setminus\Omega)}|$ for all $j$; since supp$(v^k)$ shrinks toward $\overline{\Omega}$ as $k\to\infty$, we have  $|\rho^k(s,t,x)-\rho^k_0(x)|\to0$ as $k\to\infty$ for a.e. $x\in K\setminus \overline{\Omega}$; since  $\norm \rho^k(s,t,\cdot)\norm_{L^2(K)}^2=\norm \rho_0^k\norm_{L^2(K)}^2+\int_s^t\int_K(\nabla\cdot v^k(r,x)) \rho^k(s,r,x)^2dxdr$ (cf. \eqref{uru}) with $\norm \nabla\cdot v(t,\cdot)\norm_{L^\infty(K)}$ being integrable on each bounded interval, we have with $I_j:=[t_j-\ep_{k_j},t_\ast+\ep_{k_j}]$ or $[t_\ast-\ep_{k_j},t_j+\ep_{k_j}]$ ($\ep_k>0$ is  the mollification parameter of $v^k,\rho_0^k$)\footnote{It holds that $\norm \nabla\cdot v^k\norm_{L^1([t_1,t_2];L^\infty(K))}\le\norm \nabla\cdot v\norm_{L^1([t_1-\ep_k,t_2+\ep_k];L^\infty(K))}$.},
\begin{align*}
\theta\le& \Big|\norm \rho^{k_j}(s,t_j,\cdot)\norm_{L^2(K\setminus\Omega)}-\norm \rho_0^{k_j}\norm_{L^2(K\setminus\Omega)}\Big|\\
\le& \Big|\norm \rho^{k_j}(s,t_j,\cdot)\norm_{L^2(K\setminus\Omega)}-\norm \rho^{k_j}(s,t_\ast,\cdot)\norm_{L^2(K\setminus\Omega)}\Big|
+\norm \rho^{k_j}(s,t_\ast,\cdot)- \rho_0^{k_j}\norm_{L^2(K\setminus\Omega)}\\
\le & \sup_{r\in[s,s+T]}\norm \rho^k(s,t,\cdot)\norm_{L^2(K)}^2\int_{I_j}\norm \nabla \cdot v(r,\cdot)\norm_{L^\infty(K)}dr \\
&\quad+\norm \rho^{k_j}(s,t_\ast,\cdot)- \rho_0^{k_j}\norm_{L^2(K\setminus\Omega)}
\to0\quad\mbox{ as $j\to\infty$,}
\end{align*}
which is a contradiction. 
Take any $\nu>0$.  
We find $\ep_\nu>0$ for which:  due to  \eqref{kuri},  if $0<\ep<\ep_\nu$, it holds that  
\begin{align}\label{well}
&\sup_{t\in[s,s+T]}\norm \rho^\ep(s,t,\cdot)-\rho(s,t,\cdot)\norm_{L^2(K)}<\nu;
\end{align} 
due to \eqref{sop1},  if $0<\ep<\ep_\nu$ and $t\in[s,s+T]$, it holds that  for all $k\in\N$,
\begin{align}\label{sop3}
&\norm \rho^k(s,t,\cdot)-\rho^\ep(s,t,\cdot)\norm_{L^2(K)}^2
=\norm \rho^k(s,t,\cdot)-\rho^\ep(s,t,\cdot)\norm_{L^2(\Omega)}^2+\int_{K\setminus\Omega} \rho^k_0(x)^2dx\\\nonumber
&\qquad +\int_{K\setminus\Omega}\{\rho^k(s,t,x)^2- \rho^k_0(x)^2\}dx
+\int_{K\setminus\Omega}\{\rho^\ep(s,t,x)-2\rho^k(s,t,x)\}\rho^\ep(s,t,x)dx\\\nonumber
&\quad \ge \norm \rho^k(s,t,\cdot)-\rho^\ep(s,t,\cdot)\norm_{L^2(\Omega)}^2+\int_{K\setminus\Omega} \rho^k_0(x)^2dx+\int_{K\setminus\Omega}\{\rho^k(s,t,x)^2- \rho^k_0(x)^2\}dx-\nu;
\end{align}
since $\norm \rho_0^\ep-\rho_0\norm_{L^2(\Omega)}\to0$ and $\rho_0^\ep(x)\to0$ for all $x\in K\setminus\overline{\Omega}$ as $\ep\to0$,  if $0<\ep<\ep_\nu$, it holds that   for all $k\in\N$,
\begin{align}\label{sop4}
\norm \rho^k_0&-\rho^\ep_0\norm_{L^2(K)}^2
=\int_{K\setminus\Omega} \rho^k_0(x)^2dx
+\int_{K\setminus\Omega} \{\rho^\ep_0(x)-2\rho^k_0(x)\}\rho_0^\ep(x)dx\\\nonumber
&+\int_{\Omega} \{\rho_0^k(x)-\rho^\ep_0(x)\}\{\rho_0^k(x)-\rho_0(x)\}dx+\int_{\Omega} \{\rho_0^\ep(x)-\rho^k_0(x)\}\{\rho_0^\ep(x)-\rho_0(x)\}dx\\\nonumber
\le& \int_{K\setminus\Omega} \rho^k_0(x)^2dx
+\int_{\Omega} \{\rho_0^k(x)-\rho^\ep_0(x)\}\{\rho_0^k(x)-\rho_0(x)\}dx+\nu;
\end{align}
since $R_+^\ep(s,\cdot,\cdot)\to0$ in $L^1$ as $\ep\to0$,  if $0<\ep<\ep_\nu$, it holds that   for all $k\in\N$,
\begin{align}\label{tete3}
\sup_{t\in[s,s+T]}\Big|2\int_{s}^t\int_KR_+^\ep(s,r,x) (\rho^k(s,r,x)-\rho^\ep(s,r,x))dxdr\Big|<\nu. 
\end{align}
Fix  any $\ep\in(0,\ep_\nu)$. Then, in view of \eqref{sop2} and $\norm \rho_0^k-\rho_0\norm_{L^2(\Omega)}\to0$ as $k\to0$, we find $k_\nu\in\N$ such that:  if $k\ge k_\nu$  and $t\in[s,s+T]$ the estimate \eqref{sop3} and \eqref{sop4} lead to   
\begin{align*}
\norm \rho^k(s,t,\cdot)-\rho^\ep(s,t,\cdot)\norm_{L^2(K)}^2 &\ge \norm \rho^k(s,t,\cdot)-\rho^\ep(s,t,\cdot)\norm_{L^2(\Omega)}^2+\int_{K\setminus\Omega} \rho^k_0(x)^2dx-2\nu,\\
\norm \rho^k_0-\rho^\ep_0\norm_{L^2(K)}^2&\le \int_{K\setminus\Omega} \rho^k_0(x)^2dx+2\nu;
\end{align*}
 if $k\ge k_\nu$, we have   
\begin{align*}
\sup_{t\in[s,s+T]}\Big|\int_{s}^t\int_K (\nabla\cdot v^k(r,x)&-\nabla\cdot v(r,x))(\rho^k(s,r,x)-\rho^\ep(s,r,x))^2\,dxdr\\\nonumber
 -2\int_{s}^t\int_K(v^k(r,x)&-v(r,x))\cdot (\nabla \rho^\ep(s,r,x)) (\rho^k(s,r,x)-\rho^\ep(s,r,x))dxdr  \Big|<\nu,
\end{align*}
where $v^k\to v$ and $\nabla\cdot v^k\to\nabla\cdot v$ as $k\to\infty$.  
Therefore, \eqref{kips}  and Gronwall's inequality yield for all $k\ge k_\nu$  and all $t\in[s,s+T]$, 
\begin{align*}
\norm \rho^k(s,t,\cdot)-\rho^\ep(s,t,\cdot)\norm_{L^2(\Omega)}^2
\le &6\nu +\int_{s}^t\norm \nabla\cdot v(r,\cdot) \norm_{L^\infty(\Omega)}\norm\rho^k(s,r,\cdot)-\rho^\ep(s,r,\cdot)\norm_{L^2(\Omega)}^2\,dr,\\
\norm \rho^k(s,t,\cdot)-\rho^\ep(s,t,\cdot)\norm_{L^2(\Omega)}^2\le& 6\nu e^{  \norm \nabla\cdot v\norm_{L^1([s,s+T];L^\infty(\Omega))}}.
\end{align*}
By \eqref{well}, we have for all $k\ge k_\nu$,
\begin{align*}
\sup_{t\in[s,s+T]}\norm \rho^k(s,t,\cdot)-\rho(s,t,\cdot)\norm_{L^2(\Omega)}<\nu+\Big\{   6\nu e^{  \norm \nabla\cdot v\norm_{L^1([s,s+T];L^\infty(\Omega))}} \Big\}^\2.
\end{align*}
A similar argument shows that for any $T>0$,  
\begin{align*}
\sup_{t\in[s-T,s]}\norm \rho^k(s,t,\cdot)-\rho(s,t,\cdot)\norm_{L^2(\Omega)}<\nu+\Big\{   6\nu e^{  \norm \nabla\cdot v\norm_{L^1([s-T,s];L^\infty(\Omega))}} \Big\}^\2.
\end{align*}
Since $\nu>0$ is arbitrary, we conclude the proof.
\end{proof}
Next, we consider \eqref{LT} fixing $\rho_0\in L^\infty(\Omega)$ and varying  $s\in\R$, and analyze how $\rho(s,t,x)$ depends on $s$, where  $s$ is  the initial time  for the PDE and the active time for the ODE. 
We already know that $\rho:\R\times\R\times\Omega\to\R$ is well-defined at least as $\rho(s,\cdot,\cdot)\in L^\infty(\R\times\Omega)$ for each $s\in\R$ and $\rho(s,t,\cdot)$ is determined for every $t\in\R$.  
\begin{Prop}\label{LT-kasoku-2}
For each $s\in\R$, let $\{\rho^{k}(s,\cdot,\cdot)\}_{k\in\N}$ be the sequence of approximate solution defined in \eqref{LT-approx} and let  $\rho(s,\cdot,\cdot)$ be a unique weak solution of \eqref{LT} that is identified with an element of $C^0(\R;L^2(\Omega))$.  
Then, for each $t\in\R$, there exists a measurable function $u(\cdot,t,\cdot):\R\times\Omega\to\R$ such that $u(\cdot,t,\cdot)\in L^\infty(\R\times\Omega)$,  $u(s,t,\cdot)=\rho(s,t,\cdot)$ for every $s\in\R$,  and  for each $T_0<T_1$ it holds that   $\rho^k(\cdot,t,\cdot)|_{s\in[T_0,T_1]}\to u(\cdot,t,\cdot)|_{s\in[T_0,T_1]}$ in $L^2([T_0,T_1]\times\Omega)$  as $k\to\infty$. 
In particular, it holds that $\rho^k(s,t,\cdot) \to u(s,t,\cdot)$ in $L^2(\Omega)$ as $k\to\infty$ for every $s,t\in\R$.  
%
\end{Prop}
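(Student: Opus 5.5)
Fix $t\in\R$ and $T_0<T_1$. The plan is to upgrade the pointwise-in-$s$ convergence of Proposition \ref{LT-L2converge} to $L^2([T_0,T_1]\times\Omega)$ convergence in $(s,x)$. We then define $u(\cdot,t,\cdot)$ as the $L^2$-limit and check that it agrees with $\rho(s,t,\cdot)$ for every $s$.

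\emph{Step 1: convergence for each fixed $s$.} For each fixed $s\in\R$, Proposition \ref{LT-L2converge}, applied with the interval $[T_0',T_1']\ni t$, gives $\norm\rho^k(s,t,\cdot)-\rho(s,t,\cdot)\norm_{L^2(\Omega)}\to0$ as $k\to\infty$. This already yields the last assertion once we know $u(s,t,\cdot)=\rho(s,t,\cdot)$.

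\emph{Step 2: measurability in $(s,x)$.} The function $(s,x)\mapsto\rho^k(s,t,x)=\rho^k_0(X^k(s,t,x))$ is continuous, hence measurable on $[T_0,T_1]\times\Omega$. It is also uniformly bounded: in case (I2) by $\beta$, and in case (I1) by $\max_{\overline K}|f|$, since $X^k(s,t,K)=K$.

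\emph{Step 3: Cauchy property in $L^2([T_0,T_1]\times\Omega)$.} Consider the functions $g_{k,l}(s):=\norm\rho^k(s,t,\cdot)-\rho^l(s,t,\cdot)\norm^2_{L^2(\Omega)}$. Each $g_{k,l}$ is continuous in $s$ and bounded uniformly in $k,l,s$ by $4\beta'^2\meas(\Omega)$, where $\beta'$ is the uniform bound from Step 2. For each $s$, Step 1 gives $g_{k,l}(s)\to0$ as $k,l\to\infty$. To apply dominated convergence, pass to the pointwise quantity
$$h_k(s):=\norm\rho^k(s,t,\cdot)-\rho(s,t,\cdot)\norm_{L^2(\Omega)}.$$
This requires measurability of $s\mapsto h_k(s)$, and that is the main obstacle. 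The map $s\mapsto\rho(s,t,\cdot)\in L^2(\Omega)$ has no a priori continuity or measurability in $s$.

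I would bypass this obstacle by working with the Cauchy sequence directly. Note that $\limsup_{k,l}g_{k,l}(s)=0$ for each $s$. To use dominated convergence on a double limit, argue by contradiction: suppose that along some sequences $k_j,l_j\to\infty$ we have
$$\int_{T_0}^{T_1}g_{k_j,l_j}(s)\,ds\ge\theta>0 .$$
Now $g_{k_j,l_j}(s)\le 2h_{k_j}(s)^2+2h_{l_j}(s)^2\to0$ for each $s$. The functions $g_{k_j,l_j}$ are measurable, being continuous, and uniformly bounded. Dominated convergence therefore gives $\int g_{k_j,l_j}\to0$, a contradiction. The pointwise bound by $h$ is used only for the pointwise limit, so no measurability of $h_k$ is needed. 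Hence $\{\rho^k(\cdot,t,\cdot)\}$ is Cauchy in $L^2([T_0,T_1]\times\Omega)$. Exhausting $\R$ by the intervals $[-n,n]$ produces a limit $u(\cdot,t,\cdot)\in L^2_{s\text{-loc}}$. The limit is bounded by $\beta'$, so it lies in $L^\infty$ (by the same argument as in Proposition \ref{LT-existence}).

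\emph{Step 4: identification of the representative.} By Fubini, a subsequence satisfies $\rho^{k_j}(s,t,\cdot)\to u(s,t,\cdot)$ in $L^2(\Omega)$ for a.e. $s$. Step 1 shows that this limit equals $\rho(s,t,\cdot)$ for a.e. $s$. We then redefine $u$ on the remaining null set of $s$ by setting $u(s,t,\cdot):=\rho(s,t,\cdot)$ for \emph{every} $s$. This keeps $u$ measurable, because the modification takes place on $N\times\Omega$ with $N$ null (the new values on a null set do not affect Lebesgue measurability). With this choice $u(s,t,\cdot)=\rho(s,t,\cdot)$ for all $s$, and Step 1 gives the final claim.
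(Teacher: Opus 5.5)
Your argument is correct, but it is organized differently from the paper's.

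The paper's route has four steps:
\begin{enumerate}
\item It extracts, by $L^2$-weak compactness and a diagonal argument in $T$, a subsequence $\rho^{b_k}(\cdot,t,\cdot)\rightharpoonup u(\cdot,t,\cdot)$.
\item It identifies $u(s,t,\cdot)=\rho(s,t,\cdot)$ for a.e.\ $s$ by testing against $\varphi(x)\psi(s)$ and using a countable dense subset of $L^2(\Omega)$. Here it obtains measurability of $s\mapsto(\rho(s,t,\cdot),\varphi)_{L^2(\Omega)}$ as a pointwise limit of measurable functions.
\item It upgrades weak to strong convergence by showing convergence of norms, via dominated convergence in $s$ combined with Proposition \ref{LT-L2converge}.
\item It recovers the whole sequence by a subsequence-of-subsequence contradiction, and then modifies $u$ on $N\times\Omega$.
\end{enumerate}

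You instead prove directly that $\{\rho^k(\cdot,t,\cdot)\}$ is Cauchy in $L^2([T_0,T_1]\times\Omega)$. You apply dominated convergence to the continuous, uniformly bounded functions $g_{k,l}$, using the pointwise bound by $h_{k_j}, h_{l_j}$ only to identify the pointwise limit. Only afterwards do you identify the strong limit with $\rho$, through an a.e.-in-$s$ subsequence obtained from Fubini.

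The crux is the same in both proofs: pointwise-in-$s$ convergence from Proposition \ref{LT-L2converge} plus dominated convergence in $s$. Your route, however, dispenses with weak compactness, the separability argument, and the final contradiction argument, since completeness gives convergence of the full sequence at once. The paper's route fits its earlier weak-compactness framework of Proposition \ref{LT-existence}, at the cost of those extra steps.

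Two small remarks. First, the measurability of $h_k$ that you call the main obstacle is not really one: $h_k(s)^2=\lim_{l\to\infty}g_{k,l}(s)$ is a pointwise limit of continuous functions, hence Borel, so you could also run dominated convergence on $h_k$ directly. Second, the $L^\infty$ bound on $u$ follows immediately from a.e.\ convergence of a subsequence of the strongly convergent sequence; you do not need the duality argument of Proposition \ref{LT-existence}.
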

\begin{proof} $\rho^k(s,t,x)$ is $C^1$-smooth in all variables. 
For each $T>0$, we have $\norm \rho^{k}(\cdot,t,\cdot)\norm_{L^2([-T,T]\times\Omega)}^2\le 2T\meas(\Omega)\norm \rho_0\norm_{L^\infty(\Omega)}^2$ for all $k\in\N$. 
Hence, there exists a subsequence $\{\rho^{a_k}(\cdot,t,\cdot)\}_{k\in\N}\subset\{\rho^k(\cdot,t,\cdot)\}_{k\in\N}$ that converges weakly to some element of $ L^2([-T,T]\times\Omega)$ as $k\to\infty$. 
By a diagonal argument with respect to $T\in\N$, we find a subsequence $\{\rho^{b_k}(\cdot,t,\cdot)\}_{k\in\N}\subset\{\rho^k(\cdot,t,\cdot)\}_{k\in\N}$ and a measurable function $u(\cdot,t,\cdot):\R\times\Omega\to\R$ such that $\rho^{b_k}(\cdot,t,\cdot)|_{s\in[-T,T]} \wto u(\cdot,t,\cdot)|_{s\in[-T,T]}$ in $ L^2([-T,T]\times\Omega)$ as $k\to\infty$ for all $T>0$.  

Due to Proposition \ref{LT-L2converge}, we see that $(\rho^{b_k}(s,t,\cdot),\varphi)_{L^2(\Omega)}$ converges to $(\rho(s,t,\cdot),\varphi)_{L^2(\Omega)}$ as $k\to\infty$ for each $s\in\R$  and $\varphi\in L^2(\Omega)$. Hence,  $(\rho(s,t,\cdot),\varphi)_{L^2(\Omega)}$ is a bounded  measurable function of $s$ and satisfies for all $\psi\in L^2([-T,T])$, 
\begin{align*}
& \lim_{k\to\infty} \int_{-T}^T\int_\Omega \rho^{b_k}(s,t,x)\varphi(x)\psi(s)dxds=\int_{-T}^T\int_\Omega u(s,t,x)\varphi(x)\psi(s)dxds\\
 &\quad= \int_{-T}^T (u(s,t,\cdot),\varphi)_{L^2(\Omega)}\psi(s) ds 
 =  \int_{-T}^T(\rho(s,t,\cdot),\varphi)_{L^2(\Omega)}\psi(s)ds.
 \end{align*}
Since $\psi$ is arbitrary, there exists a null set $N_T(\varphi)\subset[-T,T]$ such that 
$$(u(s,t,\cdot),\varphi)_{L^2(\Omega)}=(\rho(s,t,\cdot),\varphi)_{L^2(\Omega)} \quad\mbox{for all $s\in [-T,T]\setminus N_T(\varphi)$}.$$
Since $L^2(\Omega)$ is separable, we have a dense set $\{\varphi^j\}_{j\in\N}\subset L^2(\Omega)$.  
Define the null set $N_T:=\cup_{j\in\N}N_T(\varphi_j)$. For each $\varphi\in L^2(\Omega)$, we have  
\begin{align*}
&(u(s,t,\cdot), \varphi)_{L^2(\Omega)}-(\rho(s,t,\cdot),\varphi)_{L^2(\Omega)} \\
&\qquad =(u(s,t,\cdot),\varphi-\varphi^j)_{L^2(\Omega)}-(\rho(s,t,\cdot),\varphi-\varphi^j)_{L^2(\Omega)}
\quad\mbox{for all $s\in [-T,T]\setminus N_T$},
\end{align*}
where a suitable choice of $j$ makes the right hand side arbitrarily close to $0$.  Hence, we find that 
$$u(s,t,\cdot)=\rho(s,t,\cdot)\quad\mbox{for all $\dis s\in \R\setminus N$ with $\dis N:=\bigcup_{T\in\N}N_T$}.$$
Thus $u(s,t,\cdot)=\rho(s,t,\cdot)$ for all  $s\in\R\setminus N$.

The weak convergence of $\{\rho^{b_k}(\cdot,t,\cdot)\}_{k\in\N}$ to $u(\cdot,t,\cdot)$ on $[-T,T]\times\Omega$ is in fact strong convergence, because 
\begin{align*}
&\Big|\norm \rho^{b_k}(\cdot,t,\cdot)\norm_{L^2([-T,T]\times\Omega)}^2-\norm u(\cdot,t,\cdot)\norm_{L^2([-T,T]\times\Omega)}^2\Big|\\
&\le \int_{-T}^T\Big[\int_\Omega |\rho^{b_k}(s,t,x)+u(s,t,x)||\rho^{b_k}(s,t,x)-u(s,t,x)| dx\Big]ds\to0\mbox{\quad as $k\to\infty$,}
\end{align*}
where $\norm\rho^{b_k}(s,t,\cdot)-u(s,t,\cdot)\norm_{L^2(\Omega)}^2=\norm\rho^{b_k}(s,t,\cdot)-\rho(s,t,\cdot)\norm_{L^2(\Omega)}^2$ a.e. $s\in\R$ is measurable and Proposition \ref{LT-L2converge} is used.
Suppose that we do not have $\rho^k(\cdot,t,\cdot)|_{s\in[-T,T]}\to u(\cdot,t,\cdot)|_{s\in[-T,T]}$ in $L^2([-T,T]\times\Omega)$  as $k\to\infty$ for some $T$. 
Then, we find $\theta>0$ and a subsequence $\{\rho^{a_k}\}_{k\in\N}\subset\{\rho^k\}_{k\in\N}$ such that  $ \norm  \rho^{a_k}(\cdot,t,\cdot)
-u(\cdot,t,\cdot)\norm_{L^2([-T,T]\times\Omega)}\ge\theta$ for all $k\in\N$. 
However, an argument similar to the above yields a subsequence  $\{\rho^{b_k}\}_{k\in\N}\subset\{\rho^{a_k}\}_{k\in\N}$ such that  $ \norm  \rho^{b_k}(\cdot,t,\cdot)
-u(\cdot,t,\cdot)\norm_{L^2([-T,T]\times\Omega)}\to 0$ as $k\to\infty$. This is a contradiction. 

Changing the value of $u(\cdot,t,\cdot)$ on the null set $N\times\Omega$ into $\rho(\cdot,t,\cdot)$, we confirm that $u(s,t,\cdot)=\rho(s,t,\cdot)$ for every $s\in\R$ with $ \norm  \rho^{k}(\cdot,t,\cdot)
-u(\cdot,t,\cdot)\norm_{L^2([-T,T]\times\Omega)}\to 0$ as $k\to\infty$ and 
$\norm \rho^k(s,t,\cdot)- u(s,t,\cdot)\norm_{L^2(\Omega)}=\norm \rho^k(s,t,\cdot)- \rho(s,t,\cdot)\norm_{L^2(\Omega)}\to 0$ as $k\to\infty$ for every $s\in\R$.   
\end{proof}
The representative $u(\cdot,t,\cdot)$ constructed in Proposition \ref{LT-kasoku-2} will serve as the generalized flow map for the initial time $t$ in the next section when the initial data is chosen as $\rho_0(x)=x_i$ ($i=1,2,3$).   
\setcounter{section}{3}
\setcounter{equation}{0}
\section{Generalized flow maps and Reynolds transport theorem} 

In this section, we construct a generalized flow map and prove Theorems \ref{Thm-DL}--\ref{main}, together with auxiliary lemmas and propositions required  for the limiting arguments.  
In particular, we show in detail that trimming introduced in Definition 1 is not merely technical, but rather represents a physically meaningful image volume canonically selected by smooth approximation.   

We now specialize the Section 3 to the case where the smooth initial data $\rho_0$ for \eqref{LT} is given as  
$$ \rho_0(x):=x_i\quad (i=1,2,3),$$
and deal with \eqref{LT-ODE}, \eqref{LT-approx}, \eqref{1LT-2} with $\rho_0^k(x)=x_i$ ($i=1,2,3$) for all $k\in\N$ (i.e., (I1)). 
Denote the corresponding approximate solutions $\rho^k$ of \eqref{1LT-2} by $X_i^k(s,t,x)$, and let $X_i(\cdot,t,\cdot)$ be the representative $u(\cdot,t,\cdot)$ obtained in Proposition \ref{LT-kasoku-2}. 
Writing  $X=(X_1,X_2,X_3)$, we already know that 
\begin{align*}
&\mbox{$ X^k(\cdot,t,\cdot)\to X(\cdot,t,\cdot)$ in $L^2([T_0,T_1]\times\Omega)^3$ for each $T_0<T_1$},\\
&\mbox{$ X^k(s,t,\cdot)\to X(s,t,\cdot)$ in $L^2(\Omega)^3$ for every $s,t\in\R$.}
\end{align*}  
Our first objective is to pass to the limit in the classical flow identity
\begin{align}\label{flow^k}
X^k(s,t,x)=x+\int_t^s v^k(r,X^k(r,t,x))\,dr, 
\end{align}
which leads to  the corresponding integral formula satisfied by $X$.  
For this purpose, we introduce the space-time maps $G^k(s,t,x):=(s,X^k(s,t,x))$, $G(s,t,x):=(s,X(s,t,x))$ and investigate measurability of the (inverse) image  under  $G$ and $X$. 
The following lemma is required not only for justification of regular co-moving volumes but also for the proof of convergence of $v^k(\cdot,X^k(\cdot,t,\cdot))$ to $v(\cdot,X(\cdot,t,\cdot))$.   
\begin{Lemma}\label{FL-image}
Let $t\in\R$ be arbitrary.  Let $A\subset\R\times\Omega$ and $B\subset\Omega$ be arbitrary  bounded measurable sets. Then, the following statements hold true:   
\begin{enumerate}
\item  There exists a null set $N^t_A\subset A$ for which the image  $\{ G(s,t,x) \,|\,  (s,x)\in A\setminus N_A^{t}\}$  is a bounded measurable subset of $\R\times\overline{\Omega}$. 

\item  Let $N^t_A\subset A$ be a null set such that $\{ G(s,t,x) \,|\,  (s,x)\in A\setminus N^t_A\}$ is  a bounded measurable subset of $\R\times\overline{\Omega}$.  
Then, there exists a subsequence $\{G^{a_k}\}\subset\{G^k\}$  for which we find   for each $\delta>0$ and $\ep>0$  a closed set $A_\delta\subset \R^4$, an open set $O_\ep\subset\R\times K$ and $k_\ep\in\N$ such that  
\begin{itemize}
\item $A_\delta\subset A\setminus N^t_A\quad \mbox{with  $\meas(A_\delta)>\meas(A)-\delta$}$,
\item $ \meas(O_\ep)< \meas(\{G(s,t,x)  \,|\,(s,x)\in A_\delta\} )+\ep,$
\item $ \{G(s,t,x) \,|\,(s,x)\in A_\delta\} \mbox{ is a closed subset of $O_\ep$}$,
\item $\{G^{a_k}(s,t,x) \,|\,(s,x)\in A_\delta\}\mbox{ is a closed subset of $O_\ep$ for all $k\ge k_\ep$}$, 
\item $\meas(\{G^{a_k}(s,t,x)\,|\,  (s,x)\in  A_\delta\} )-\ep<  \meas(\{ G(s,t,x) \,|\,  (s,x)\in A\setminus N^t_A\})\quad\mbox{for all $k\ge k_\ep$}$.
\end{itemize}

\item  There exists a bounded null set $N_A\subset \R\times \Omega$ for which $A\cup N_A$ is Borel measurable, where the inverse image $\tilde{A}:=\{ (s,x)\in \R\times\Omega\,|\, G(s,t,x)\in A\cup N_A  \}$ is measurable. 
There exists  a subsequence $\{G^{b_k}\}\subset\{G^k\}$  for which we find  for each $\delta>0$ and $\ep>0$ a closed set $\tilde{A}_\delta\subset \tilde{A}$ and $k_\ep\in\N$ such that $\meas(\tilde{A}_\delta)>\meas(\tilde{A})-\delta$ and 
$$\meas(\{G^{b_k}(s,t,x)\,|\,  (s,x)\in  \tilde{A}_\delta\} )-\ep<  \meas(A)\quad\mbox{for all $k\ge k_\ep$}.$$ 

\item   Let $s\in\R$ be arbitrary.  There exists a null set $N_B^{s,t}\subset B$ for which the image  $X(s,t,B\setminus N_B^{s,t})$  is a measurable subset of $\overline{\Omega}$. 


\item  Let $s\in\R$ be arbitrary. Let $N_B^{s,t}\subset B$ be a null set such that  $X(s,t,B\setminus N_B^{s,t})$  is a measurable subset of $\overline{\Omega}$. 
Then,  there exists  a subsequence $\{X^{c_k}\}\subset\{X^k\}$ for which we find  for each $\delta>0$ and $\ep>0$ a closed set $B_\delta\subset \R^3$, an open set $O_\ep\subset K$ and $k_\ep\in\N$ such that  
\begin{itemize}
\item $B_\delta\subset B\setminus N^{s,t}_B\quad \mbox{with  $\meas(B_\delta)>\meas(B)-\delta$}$,
\item $\meas(O_\ep)< \meas(X(s,t,B_\delta) )+\ep$, 
\item $X(s,t,B_\delta) \mbox{ is a closed subset of $O_\ep$} $,
\item $X^{c_k}(s,t,B_\delta)\mbox{ is a closed subset of $O_\ep$ for all $k\ge k_\ep$},$
\item $\meas(X^{c_k}(s,t, B_\delta ))-\ep<  \meas( X(s,t,B\setminus N^{s,t}_B))\quad\mbox{for all $k\ge k_\ep$}.$
\end{itemize}

\item   Let $s\in\R$ be arbitrary. 
There exists a null set $N_B\subset \Omega$ for which $B\cup N_B$ is Borel measurable, where the inverse image $\tilde{B}:=X(s,t,\cdot)^{-1}(B\cup N_B )$ is measurable.     There exists  a subsequence $\{X^{d_k}\}\subset\{X^k\}$ for which we find  for each $\delta>0$ and $\ep>0$ a closed set $\tilde{B}_\delta\subset \tilde{B}$ and $k_\ep\in\N$ such that $\meas(\tilde{B}_\delta)>\meas(\tilde{B})-\delta$ and 
$$\meas(\{X^{d_k}(s,t,\tilde{B}_\delta) )-\ep<  \meas(B)\quad\mbox{for all $k\ge k_\ep$}.$$ 

\end{enumerate}
\end{Lemma}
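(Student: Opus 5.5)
The whole lemma rests on one device: Lusin's theorem together with Egorov's theorem, applied to the measurable maps $G(\cdot,t,\cdot)$ and $X(s,t,\cdot)$ and to an a.e. convergent subsequence of the approximations.

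\textbf{Items 1 and 4.} $G(\cdot,t,\cdot)$ is measurable on the bounded set $A$, since $X(\cdot,t,\cdot)$ is the $L^2$-limit of the continuous maps $X^k(\cdot,t,\cdot)$ (Proposition \ref{LT-kasoku-2}). By Lusin's theorem, for each $j\in\N$ there is a compact set $C_j\subset A$ with $\meas(A\setminus C_j)<1/j$ on which $G(\cdot,t,\cdot)$ is continuous. Hence $G(C_j)$ is compact, and so is every $G(C_j\cap F)$ with $F$ closed. Set $N^t_A:=A\setminus\bigcup_j C_j$, which is null. Then the image of $A\setminus N^t_A$ is $\bigcup_j G(C_j)$, an $F_\sigma$ set. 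It is bounded because $X$ takes values in $\overline{\Omega}$: the $L^2$-limit of $X^k$ is a.e. in $\overline{K}$, and a.e. in $\overline\Omega$ after the leakage argument (the $K\setminus\Omega$ estimate \eqref{sop2}). If necessary I would also fold into $N^t_A$ the null set where $X\notin\overline\Omega$. Item 4 is the same argument with $X(s,t,\cdot)$ on $B$, using $X^k(s,t,\cdot)\to X(s,t,\cdot)$ in $L^2(\Omega)$.

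\textbf{Items 2 and 5.} First pick a subsequence $a_k$ with $G^{a_k}\to G$ a.e. on $A$; this is possible because the convergence holds in $L^2$. Given $\delta$, Lusin and Egorov give a compact $A_\delta\subset A\setminus N^t_A$ with $\meas(A_\delta)>\meas(A)-\delta$ such that $G$ is continuous on $A_\delta$ and $G^{a_k}\to G$ uniformly there. The image $G(A_\delta)$ is compact. Outer regularity gives an open $O'\supset G(A_\delta)$ with $\meas(O')<\meas(G(A_\delta))+\ep$. Because $G(A_\delta)$ is compact, some $\eta$-neighbourhood of it lies in $O'$; take $O_\ep:=O'\cap(\R\times K)$. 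Uniform convergence then gives $G^{a_k}(A_\delta)\subset O_\ep$ for $k\ge k_\ep$, and these images are compact since $G^{a_k}$ is continuous. The last bullet follows from
\[
\meas(G^{a_k}(A_\delta))\le\meas(O_\ep)<\meas(G(A_\delta))+\ep\le \meas(G(A\setminus N^t_A))+\ep .
\]
Item 5 is identical with $X(s,t,\cdot)$.

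\textbf{Items 3 and 6.} Take a Borel $G_\delta$ set $A\cup N_A\supset A$ with $N_A$ null and bounded. The preimage $\tilde A$ is measurable because $G(\cdot,t,\cdot)$ is measurable and $A\cup N_A$ is Borel. Apply Lusin and Egorov on $\tilde A$, along a subsequence $b_k$, to get a compact $\tilde A_\delta\subset\tilde A$ on which $G$ is continuous and $G^{b_k}\to G$ uniformly. Then $G(\tilde A_\delta)\subset A\cup N_A$ is compact, so $\meas(G(\tilde A_\delta))\le\meas(A)$. The neighbourhood argument of items 2 and 5 gives $\meas(G^{b_k}(\tilde A_\delta))<\meas(A)+\ep$ for large $k$. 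Item 6 is the analogue for $X(s,t,\cdot)$ and $B$.

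\textbf{Main obstacle.} The delicate point is making sure the uniform convergence really confines the approximate images inside $O_\ep$, which needs compactness of the limit image. It also requires checking that the limit map takes values in $\overline\Omega$ a.e. (rather than merely in $K$), so that the images lie where the measure statements are made. The rest is routine regularity of Lebesgue measure.
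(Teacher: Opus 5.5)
Your proof is correct and follows essentially the paper's route: take an a.e.-convergent subsequence, use Egorov plus inner regularity to get compact sets on which the limit map is continuous and approached uniformly, then use outer regularity and compactness of the limit image to trap the approximate images inside $O_\ep$. Invoking Lusin directly in item 1, and bounding $\meas(G(\tilde A_\delta))\le\meas(A)$ directly in item 3 instead of passing through items 1--2, are only cosmetic differences.

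One correction: the fact that $X$ takes values in $\overline\Omega$ a.e.\ is not what \eqref{sop2} says, since that estimate concerns starting points in $K\setminus\Omega$. It comes, as in the paper, from a pointwise observation: every point at distance $>\ep_k$ from $\overline\Omega$ is stationary under $v^k$, so by uniqueness no trajectory starting in $\Omega$ reaches it, and hence any limit of $X^{a_k}(s,t,x)$ with $x\in\Omega$ lies in $\overline\Omega$.
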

\begin{proof}
1. Take $s_0<s_1$ such that $A\subset[s_0,s_1]\times\Omega$. 
Since $X_i^k(\cdot,t,\cdot)\to X_i(\cdot,t,\cdot)$ in $L^2([s_0,s_1]\times\Omega)$ as $k\to\infty$, we find a subsequence $\{G^{a_k}\}\subset\{G^k\}$ and a null set $N_1\in[s_0,s_1]\times\Omega$ such that   $G^{a_k}(s,t,x)\to G(s,t,x)$ as $k\to\infty$ for each $(s,x)\in A\setminus N_1$. 
If $x\in \Omega$ and $X^{a_k}(s,t,x)\to y$ as $k\to\infty$, we necessarily have $y\in \overline{\Omega}$. Indeed, if not,  the point $X^{a_k}(s,t,x)$ stays in an open ball $B_\nu(y)\subset \R^3\setminus\overline{\Omega}$ for all sufficiently large $k$; however, $v^{a_k}|_{B_\nu(y)}\equiv0$ for all sufficiently large $k$ and the solution $X^{a_k}(r,t,x)$ of $\gamma'(r)=v^{a_k}(r,\gamma(r))$ with $\gamma(t)=x$ cannot enter $B_\nu(y)$, which is a contradiction.   
Hence,  we have $\{ G(s,t,x) \,|\,  (s,x)\in A\}\subset [s_0,s_1]\times\overline{\Omega}$.  

Due to Egorov's theorem, for each $l\in\N$, there exists a measurable set $J^\circ_l\subset A$ such that 
$$\meas(A\setminus J^\circ_l)< \frac{1}{2l},\quad \mbox{$G^{a_k}(\cdot,t,\cdot)\to G(\cdot,t,\cdot)$ uniformly on $J^\circ_l$ as $k\to\infty$.}$$ 
Due to the regularity of the Lebesgue measure, there exists a closed set $J_l\subset J^\circ_l$ such that $\meas(J^\circ_l\setminus J_l)<\frac{1}{2l}$ and 
$$\meas(A\setminus J_l)< \frac{1}{l},\quad \mbox{$G^{a_k}(\cdot,t,\cdot)\to G(\cdot,t,\cdot)$ uniformly on $J_l$ as $k\to\infty$.}$$ 
Define  
$$J^\ast:=\bigcup_{l\in\N} J_l,\quad N_2:=A\setminus  J^\ast=\bigcap_{l\in\N}(A\setminus J_l).$$
Then, we see that $N_2$ is a null subset of $A$ and  
\begin{align*}
 \{ G(s,t,x) \,|\,  (s,x)\in A\setminus N_2=J^\ast\}= \bigcup_{l\in\N}  \{ G(s,t,x) \,|\,  (s,x)\in J_l\}. 
 \end{align*}
 Since $G^{a_k}(\cdot,t,\cdot)$ is continuous, $G(\cdot,t,\cdot)|_{J_l}$ being the limit of uniform convergence of $\{G^{a_k}(\cdot,t,\cdot)|_{J_l}\}_{k\in\N}$ is also continuous. 
 Hence,  $\{ G(s,t,x) \,|\,  (s,x)\in J_l\}$ is a closed set being measurable for each $l\in\N$. 
 Thus, we conclude the assertion 1 with $N^t_A:=N_2$. 

2. Let $\{G^{a_k}\}\subset\{G^k\}$ be  a subsequence such that   $G^{a_k}(s,t,x)\to G(s,t,x)$ as $k\to\infty$ a.e. pointwise on $ A$.
Egorov's theorem implies that for each $\delta>0$ there exists a measurable set $A_\delta^\circ\subset A\setminus N_A^t$ such that $\meas(A_\delta^\circ)> \meas(A)-\frac{\delta}{2}$ and $G^{a_k}(\cdot,t,\cdot)|_{A_\delta^\circ}\to G(\cdot,t,\cdot)|_{A_\delta^\circ}$ uniformly as $k\to\infty$.  
 The regularity of the Lebesgue measure implies that there exists a closed set $A_\delta\subset A_\delta^\circ$ such that  $\meas(A_\delta)> \meas(A_\delta^\circ)-\frac{\delta}{2}> \meas(A)-\delta$. 
 Since $G(\cdot,t,\cdot)|_{A_\delta^\circ}$ is continuous, 
  the set $\{G(s,t,x)  \,|\,(s,x)\in A_\delta\} $ is closed and hence measurable. 
  The regularity of the Lebesgue measure implies that for each $\ep>0$ there exists an open set $O_\ep$ such that 
  \begin{align*}
 \{G(s,t,x)  \,|\,(s,x)\in A_\delta\}\subset O_\ep,\quad   \meas(O_\ep)< \meas(\{G(s,t,x)  \,|\,(s,x)\in A_\delta\} )+\ep.
 \end{align*}
 Since  $\{G(s,t,x)  \,|\,(s,x)\in A_\delta\} $ is compact, there exists $\nu_\ep>0$ such that $B_{\nu_\ep}(z)\subset O_\ep$ for all $z\in \{G(s,t,x)  \,|\,(s,x)\in A_\delta\}$. 
Due to the uniform convergence, there exists $k_\ep\in\N$ such that for all $k\ge k_\ep$ we have $\sup_{(s,x)\in A_\delta}|G^{a_k}(s,t,x)-G(s,t,x)|<\nu_\ep/2$.
This implies that $ \{G^{a_k}(s,t,x)  \,|\,(s,x)\in A_\delta\}\subset O_\ep$ for all $k\ge k_\ep$.  
Thus, we have for all $k\ge k_\ep$,
$$\meas ( \{G^{a_k}(s,t,x)  \,|\,(s,x)\in A_\delta\}) <\meas(O_\ep)< \meas(\{G(s,t,x)  \,|\,(s,x)\in A_\delta\} )+\ep.$$

3. The regularity of the Lebesgue measure implies that there exists a bounded null set $N_A\subset \R\times \Omega$ such that $A\cup N_A$ is Borel measurable.  
The inverse image $\tilde{A}:=\{(s,x)\in\R\times\Omega\,|\,G(s,t,x)\in A\cup N_A\}$ is measurable as $G(\cdot,t,\cdot):\R\times \Omega\to \R^4$ is bounded and measurable. 
Due to the assertion 1, there exists a null set $N$ such that $\{G(s,t,x)\,|\, (s,x)\in \tilde{A}\setminus N\}$ is  a measurable subset of $A\cup N_A$. 
Due to the assertion 2, there exist a subsequence  $\{G^{b_k}\}\subset\{G^k\}$ for which we find for each $\delta>0$ and $\ep>0$ a closed set $\tilde{A}_\delta\subset \tilde{A}\setminus N$ and $k_\ep\in\N$ such that $\meas(\tilde{A}_\delta)>\meas(\tilde{A})-\delta$ and for all $k\ge k_\ep$,
$$\meas(\{G^{b_k}(s,t,x)\,|\,  (s,x)\in  \tilde{A}_\delta\} )-\ep <  \meas(\{ G(s,t,x) \,|\,  (s,x)\in \tilde{A}\setminus N\})\le \meas(A\cup N_A)=\meas(A).$$ 

4--6.  We may follow the same reasoning as above, where we note that $\norm X_i^k(s,t,\cdot) - X_i(s,t,\cdot)\norm_{L^2(\Omega)}\to0$ as $k\to\infty$ for each $s\in\R$.  
\end{proof}
\begin{Lemma}\label{FL-image2}
Let $s, t\in\R$ be arbitrary.  Let $A\subset\R\times\Omega$ and $B\subset\Omega$ be arbitrary  bounded null sets. Then, the following statements hold true:   
\begin{enumerate}
\item  There exists a bounded null set $N_A\subset \R\times\Omega$ for which $A\cup N_A$ is Borel measurable and the inverse image $\{ (s,x)\in \R\times\Omega\,|\, G(s,t,x)\in A\cup N_A  \}$ is a null set. 

\item  There exists a null set $N_B\subset \Omega$ for which $B\cup N_B$ is Borel measurable and the inverse image $X(s,t,\cdot)^{-1}(B\cup N_B )$ is a null set. 

%
\end{enumerate}
\end{Lemma}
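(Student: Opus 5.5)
We prove assertion 2 first and then treat assertion 1 in the same way. The idea is to combine the smooth approximation $X^k$, whose inverse images of small sets are controlled by the classical Liouville-type bound, with the $L^2$-convergence $X^k(s,t,\cdot)\to X(s,t,\cdot)$ and Egorov's theorem.

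Let $B\subset\Omega$ be a bounded null set. By outer regularity of the Lebesgue measure, for each $j\in\N$ we choose an open set $U_j\supset B$ with $\meas(U_j)<1/j$ and $U_j\subset K$, and we may take the $U_j$ decreasing. Set $G_\delta:=\bigcap_j U_j$. This is a Borel null set containing $B$, and we define $N_B:=G_\delta\setminus B$, so that $B\cup N_B=G_\delta$. Since $X(s,t,\cdot)$ is measurable, $X(s,t,\cdot)^{-1}(G_\delta)\subset X(s,t,\cdot)^{-1}(U_j)$ is measurable. It therefore suffices to show that
$$\meas(X(s,t,\cdot)^{-1}(U))\le c\,\meas(U)$$
for open $U\subset K$, with a constant $c$ independent of $U$. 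The natural candidate is $c=e^{\norm\nabla\cdot v\norm_{L^1(I^{s,t}_1;L^\infty)}}$ on a slightly enlarged interval $I^{s,t}_1$.

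For the smooth flows, the classical Jacobian bound (Proposition \ref{classical2} with $f=\chi_U$, followed by Gronwall) gives
$$\meas(\{x\in\Omega: X^k(s,t,x)\in U\})\le \meas(X^k(t,s,U))\le e^{\norm\nabla\cdot v^k\norm_{L^1(I^{s,t};L^\infty(K))}}\meas(U).$$
Here $\norm\nabla\cdot v^k\norm_{L^1(I^{s,t};L^\infty(K))}\le \norm\nabla\cdot v\norm_{L^1(I^{s,t}_{\ep_k};L^\infty(\Omega))}$, because the zero extension of $v$ has divergence supported in $\overline\Omega$ with the same $L^\infty$ bound. This uses $v(t,\cdot)\in W^{1,1}_0$: the distributional divergence of the extension carries no boundary part.

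To pass to the limit, take a subsequence with $X^{a_k}(s,t,x)\to X(s,t,x)$ for a.e. $x$. For $x$ with $X(s,t,x)\in U$ and $U$ open, we then have $X^{a_k}(s,t,x)\in U$ for all large $k$. Fatou's lemma applied to indicator functions gives
$$\meas(X(s,t,\cdot)^{-1}(U))\le\liminf_k\meas(X^{a_k}(s,t,\cdot)^{-1}(U))\le c\,\meas(U).$$
Hence $\meas(X(s,t,\cdot)^{-1}(G_\delta))\le c/j$ for every $j$, so this inverse image is null. Openness of $U$ is what makes the pointwise limit argument work, and this is why we pass through the $G_\delta$ hull rather than working with $B$ directly.

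Assertion 1 is identical in the space-time setting. Choose open sets $U_j\subset\R\times K$ containing $A$, bounded in time, with $\meas(U_j)<1/j$, and set $A\cup N_A:=\bigcap_jU_j$. For the smooth maps, Fubini in $s$ together with the slice bound above gives
$$\meas\{(s,x)\in\R\times\Omega: G^k(s,t,x)\in U\}\le \sup_{s\in[s_0,s_1]}c_k^{s,t}\,\meas(U).$$
The constant $\sup_{s\in[s_0,s_1]}c_k^{s,t}$ is uniformly bounded in $k$ on the bounded time range. The a.e. convergence $G^{a_k}\to G$ on $[s_0,s_1]\times\Omega$ from the $L^2$-convergence, together with Fatou, then yields the same bound for $G$. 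The main obstacle is this uniform Jacobian bound for $X^k$ on $K$ in terms of $\nabla\cdot v$ on $\Omega$, uniformly in $k$. I would justify it via the mollification estimate in the footnote to Proposition \ref{LT-L2converge}, $\norm \nabla\cdot v^k\norm_{L^1([t_1,t_2];L^\infty(K))}\le\norm \nabla\cdot v\norm_{L^1([t_1-\ep_k,t_2+\ep_k];L^\infty(K))}$.
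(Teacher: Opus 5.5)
Your proof is correct, and it takes a different and more direct route than the paper's.

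\textbf{The paper's route.} The paper derives the lemma from assertion 3 of Lemma \ref{FL-image}. It takes Egorov-type closed sets $\tilde A_\delta\subset\tilde A$ on which $G^{a_k}\to G$ uniformly. The \emph{images} $\{G^{a_k}(s,t,x)\,|\,(s,x)\in\tilde A_\delta\}$ then eventually have measure smaller than any $\ep>0$, because $\meas(A)=0$. The paper then pulls back slice by slice with the classical Liouville theorem, writing $\tilde A_\delta(s)=X^{a_{k_j}}(t,s,E^j_s)$, to get $\meas(\tilde A_\delta)=0$ and hence $\meas(\tilde A)=0$.

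\textbf{Your route.} You work only with \emph{inverse images of open sets}. Lower semicontinuity of $\chi_U$ along an a.e.\ convergent subsequence, combined with Fatou's lemma, transfers the $k$-uniform Jacobian bound from $X^k$ to $X$. The $G_\delta$ hull then takes you from open sets to the null set $B$. Both arguments rest on the same two ingredients: a.e.\ convergence of a subsequence, and a $k$-uniform Liouville/Gronwall bound for the smooth flows. You correctly justify the latter through $W^{1,1}_0$ and the mollification estimate.

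\textbf{What each buys.} Your argument avoids the image-measurability and Egorov-on-images machinery of Lemma \ref{FL-image} entirely. It also yields a quantitative statement. Letting $\ep_k\to0$ in your constant and using outer regularity gives $\meas(X(s,t,\cdot)^{-1}(E))\le c^{s,t}\meas(E)$ for every Borel $E$. This is the upper bound in \eqref{pre3-3} directly, whereas the paper obtains \eqref{pre3-3} only later via \eqref{RTT1}. The paper's route, for its part, reuses Lemma \ref{FL-image}, which it needs anyway for the co-moving volume results.

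\textbf{One small fix.} The statement requires $N_B\subset\Omega$ (resp.\ $N_A\subset\R\times\Omega$). So choose $U_j\subset\Omega$ (resp.\ $U_j\subset\R\times\Omega$) rather than $U_j\subset K$. This is possible because $\Omega$ is open and contains $B$ (resp.\ $\R\times\Omega$ contains $A$), and nothing else in your argument changes.
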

\begin{proof}
1. Set $\tilde{A}:=\{ (s,x)\in \R\times\Omega\,|\, G(s,t,x)\in A\cup N_A  \}$. By the assertion 3 of Lemma \ref{FL-image},  there exists a subsequence $\{G^{a_k}\}\subset\{G^k\}$  for which we find   for each $\delta>0$ and $\ep>0$ a closed set $\tilde{A}_\delta\subset \tilde{A}$ and $k_\ep\in\N$ such that $\meas(\tilde{A}_\delta)>\meas(\tilde{A})-\delta$ and 
\begin{align}\label{bruno}
\meas(\{G^{a_k}(s,t,x)\,|\,  (s,x)\in  \tilde{A}_\delta\} )-\ep<  \meas(A)=0\quad\mbox{for all $k\ge k_\ep$}.
\end{align}
Let $\tilde{A}_\delta(s)$ denote the cross section of $\tilde{A}_\delta$ at $s\in\R$, where $\tilde{A}_\delta(s)\subset\Omega$ is closed and hence measurable (possibly empty). 
For a sequence $\{\ep_j\}_{j\in\N}$ tending to zero as $k\to\infty$, we apply  \eqref{bruno} with $\ep=\ep_j$. Then, we find a sequence $\{k_j\}_{j\in\N}\subset\N$ such that 
$$\meas(\{G^{a_{k_j}}(s,t,x)\,|\,  (s,x)\in  \tilde{A}_\delta\} )=\int_\R \meas(X^{a_{k_j}}(s,t,\tilde{A}_\delta(s)))\,ds    <\ep_j\to0  \quad\mbox{as $j\to\infty$}.$$ 
Hence, up to a subsequence, $ \meas(X^{a_{k_j}}(s,t,\tilde{A}_\delta(s)))\to 0$ as $j\to\infty$ for a.e. $s\in\R$. 
Set $E^j_s:=X^{a_{k_j}}(s,t,\tilde{A}_\delta(s))$.  Then, we have 
$\tilde{A}_\delta(s)=X^{a_{k_j}}(t,s,E^j_s)$ and by Proposition \ref{classical3} and  Gronwall's inequality,
\begin{align*}
\meas(X^{a_{k_j}}(\tilde{t},s,E^j_s))&=\meas(E^j_s)+\int_s^{\tilde{t}}\int_{X^{a_{k_j}}(r,s,E_s)}\nabla\cdot v^{a_{k_j}}(r,x)dxdr\mbox{\quad for all $\tilde{t}\in\R$},\\
\meas(\tilde{A}_\delta(s))&=\meas(X^{a_{k_j}}(t,s,E^j_s))\le \meas(E^j_s)e^{(\norm \nabla\cdot v^{a_{k_j}}\norm_{L^1(I^{s,t};L^\infty(\Omega))}+1)}\to 0\quad\mbox{as $j\to\infty$}. 
\end{align*}
Therefore, we see that $\meas(\tilde{A}_\delta(s))=0$ for a.e. $s\in\R$ and $\tilde{A}_\delta$ is a null set. 
Since $\meas(\tilde{A})-\delta<\meas(\tilde{A}_\delta)=0$ and $\delta>0$ is arbitrary, we conclude that $\meas(\tilde{A})=0$.  

2. We may follow the same reasoning with the assertion 6 of Lemma \ref{FL-image2}. 
\end{proof}
\begin{Lemma}\label{FL-product}
1. Let $t\in\R$, $S>0$ and $w\in L^1([t-S,t+S]\times K)$ be arbitrary. 
Let $\{w^k\}_{k\in\N}\subset  C^1([t-S,t+S]\times K)$ be such that $w^k\to w$ in $L^1([t-S,t+S]\times K)$ as $k\to\infty$.    
Then, the function $w(\cdot,X(\cdot,t,\cdot))$ belongs to $L^1([t-S,t+S]\times\Omega)$ and  $w^k(\cdot,X^k(\cdot,t,\cdot))\to w(\cdot,X(\cdot,t,\cdot))$ in $L^1([t-S,t+S]\times\Omega)$ as $k\to\infty$.  

2.  Let $s,t\in\R$ and $w\in L^1(K)$ be arbitrary. 
Let $\{w^k\}_{k\in\N}\subset  C^1(K)$ be such that $w^k\to w$  in $L^1(K)$ as  $k\to\infty$.    
Then, the function $w(X(s,t,\cdot))$ belongs to $L^1(\Omega)$ and  $w^k(X^k(s,t,\cdot))\to w(X(s,t,\cdot))$ in $L^1(\Omega)$ as $k\to\infty$.  
\end{Lemma}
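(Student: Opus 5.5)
We split $w^k(\cdot,X^k(\cdot,t,\cdot))-w(\cdot,X(\cdot,t,\cdot))$ through an intermediate smooth function. Fix $\nu>0$ and choose $\psi\in C^0_0(\R\times\R^3)$, say a mollification of the zero extension of $w$, such that $\norm w-\psi\norm_{L^1([t-S,t+S]\times K)}<\nu$. Then we write
\begin{align*}
w^k(s,X^k)-w(s,X)=\{w^k(s,X^k)-\psi(s,X^k)\}+\{\psi(s,X^k)-\psi(s,X)\}+\{\psi(s,X)-w(s,X)\},
\end{align*}
where $X^k=X^k(s,t,x)$ and $X=X(s,t,x)$.

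For the middle term we use Lemma \ref{FL-image}, which gives $X^k(\cdot,t,\cdot)\to X(\cdot,t,\cdot)$ in $L^2([t-S,t+S]\times\Omega)^3$. Since $\psi$ is uniformly continuous and bounded, the middle term tends to $0$ in $L^1([t-S,t+S]\times\Omega)$. If we pass to an a.e. convergent subsequence, dominated convergence applies; because the limit is identified, the whole sequence converges.

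The first term is handled by change of variables along the smooth flow. For each $s$, Proposition \ref{classical2}, or the Jacobian bound derived from it via Gronwall, gives for any $f\in L^1(K)$, $f\ge0$,
\begin{align*}
\int_\Omega f(X^k(s,t,x))\,dx\le\int_K f(X^k(s,t,x))\,dx\le c_k^{s,t}\int_K f(y)\,dy,
\end{align*}
with $c_k^{s,t}=e^{\norm\nabla\cdot v^k\norm_{L^1(I^{s,t};L^\infty(K))}}\le e^{\norm\nabla\cdot v\norm_{L^1([t-S-1,t+S+1];L^\infty(\Omega))}}=:C$, uniformly in $k$ and in $s\in[t-S,t+S]$. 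Applying this to $f=|w^k(s,\cdot)-\psi(s,\cdot)|$ and integrating in $s$ gives $\norm w^k(\cdot,X^k)-\psi(\cdot,X^k)\norm_{L^1}\le C(\norm w^k-w\norm_{L^1}+\nu)$.

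The third term is the main obstacle. We need the analogous bound $\int_{[t-S,t+S]\times\Omega}f(s,X(s,t,x))\,dsdx\le C\int f$ for nonnegative $f\in L^1$. Measurability of $f(\cdot,X(\cdot,t,\cdot))$ must also be established, because $f$ is only Lebesgue measurable.

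First, take $f$ continuous, nonnegative and compactly supported. Then the inequality follows by passing to the limit along the a.e. convergent subsequence with Fatou's lemma, from the same bound for $X^k$.

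Second, extend to indicator functions of open sets by monotone approximation, and then to compact and Borel sets by regularity. This yields $\meas(G(\cdot,t,\cdot)^{-1}(E))\le C\meas(E)$ for Borel $E$.

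Third, pass to Lebesgue sets. A Lebesgue measurable $f$ equals a Borel function off a null set $A$. Lemma \ref{FL-image2}(1) gives a Borel null set $A\cup N_A$ whose preimage under $G$ is null. Hence $f\circ G$ is measurable, agrees a.e. with the Borel version, and satisfies the same bound.

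Taking $f=|w|$ gives $w(\cdot,X(\cdot,t,\cdot))\in L^1$, and taking $f=|\psi-w|$ bounds the third term by $C\nu$. Together the three estimates give $\limsup_k\norm w^k(\cdot,X^k)-w(\cdot,X)\norm_{L^1}\le 2C\nu$, and since $\nu$ is arbitrary the first assertion follows.

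Assertion 2 is proved the same way at fixed $s$. We use the $L^2(\Omega)$ convergence $X^k(s,t,\cdot)\to X(s,t,\cdot)$, the bound from Proposition \ref{classical2}, and Lemma \ref{FL-image2}(2).
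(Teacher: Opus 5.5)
Your proof is correct, but it takes a different route from the paper's for integrability and for the third term.

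\textbf{The paper's route.} The paper's intermediate function is $w^{a_l}$, taken from an a.e.\ convergent subsequence of the given $w^k$.
\begin{itemize}
\item Measurability of $w(\cdot,X(\cdot,t,\cdot))$ comes from $w^{a_l}(\cdot,X)\to w(\cdot,X)$ off the null preimage supplied by Lemma \ref{FL-image2}.
\item Integrability comes from Fatou's lemma applied twice, first along $X^{b_l}$ and then along $w^{a_k}$.
\item The middle term is handled by the spatial Lipschitz continuity of $w^{a_l}$.
\item The term $\|w^{a_l}(\cdot,X)-w(\cdot,X)\|_{L^1}$ is handled by Egorov's theorem on a closed set $\Gamma_\delta$. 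On its complement, a small-set estimate combines the Jacobian bound for $X^m$, absolute continuity of $\int|w|$, and Fatou along $X^{b_m}$.
\end{itemize}

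\textbf{Your route.} You first prove, once and for all, the compressibility bound
\[
\int_{[t-S,t+S]\times\Omega} f(s,X(s,t,x))\,dsdx\le C\int_{[t-S,t+S]\times K}f
\]
for nonnegative measurable $f$. In other words, the push-forward of Lebesgue measure under $G(\cdot,t,\cdot)$ has density at most $C$. This makes $f\mapsto f\circ G$ a bounded operator on $L^1$. It gives you measurability, independence of the representative of $w$, integrability and the third term all at once. It also lets you use any continuous compactly supported $\psi$ as the intermediate function.

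\textbf{What your route buys.}
\begin{itemize}
\item It is shorter and more modular.
\item It produces Lemma \ref{FL-image2} as a by-product rather than needing it. In your third step the appeal to Lemma \ref{FL-image2}(1) is superfluous: your second step already gives $\meas(G^{-1}(A'))\le C\meas(A')=0$ for any Borel null set $A'\supset A$.
\item It uses only the $L^1$ convergence $w^k\to w$, with no continuity or Lipschitz property of the $w^k$. The paper's treatment of the middle term relies on such regularity.
\end{itemize}
The paper's route stays within the a.e.-convergence/Egorov machinery that it also uses in Lemma \ref{FL-image} for image sets.

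\textbf{Small correction.} The convergence $X^k(\cdot,t,\cdot)\to X(\cdot,t,\cdot)$ in $L^2$ comes from Proposition \ref{LT-kasoku-2}, not from Lemma \ref{FL-image}.
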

\begin{proof}
1.  Due to  the $L^1$-convergence of $\{w^k\}$, there exists a subsequence $\{w^{a_k}\}_{k\in\N}\subset \{w^k\}$ that converges to $w$ pointwise on $([t-S,t+S]\times K)\setminus E$ with a  null set $E\subset [t-S,t+S]\times K$.   
Due to Lemma \ref{FL-image2}, there exists a null set $N\subset[t-S,t+S]\times K$ such that $E\cup N$ is Borel measurable and the inverse image $\tilde{E}:= \{ (s,x)\in\R\times\Omega\,|\,  G(s,t,x)\in E\cup N\}\subset [t-S,t+S]\times\Omega$ is a null set.  
Hence, $w^{a_k}(s,X(s,t,x))\to w(s,X(s,t,x)) $ for all $(s,x)\in ([t-S,t+S]\times\Omega)\setminus \tilde{E}$ as $k\to\infty$. Since  $w^{a_k}(\cdot,X(\cdot,t,\cdot))$ is measurable, so is $w(\cdot,X(\cdot,t,\cdot))$. 

Since $X^l(\cdot,t,\cdot)\to X(\cdot,t,\cdot)$ in $L^2([t-S,t+S]\times\Omega)^3$ as $l\to\infty$, we find a subsequence $\{X^{b_l}(\cdot,t,\cdot)\}_{l\in\N}\subset\{X^l(\cdot,t,\cdot)\}$ that converges to $X(\cdot,t,\cdot)$ a.e. pointwise on $[t-S,t+S]\times\Omega$. 
Fix any $k\gg1$ and set $c:=e^{\norm \nabla\cdot v\norm_{L^1([t-S,t+S];L^\infty( \Omega))}}+1<\infty$. Due to Proposition \ref{classical2}$|_{X=X^{b_l},w=v^{b_l},f=w^{a_k}}$ and Gronwall's inequality, we have for all $l\in\N$ and $s\in[t-S,t+S]$,
\begin{align*}
\int_\Omega |w^{a_k}(s,X^{b_l}(s,t,x))|dx\le \int_K |w^{a_k}(s,X^{b_l}(s,t,x))|dx
\le c\norm w^{a_k}(s,\cdot)\norm_{L^1(K)},
\end{align*}
which leads to 
\begin{align*}
&\int_{t-S}^{t+S}\int_\Omega |w^{a_k}(s,X^{b_l}(s,t,x))|dxds\le c\norm w^{a_k}\norm_{L^1([t-S,t+S]\times K)}\le c(1+\norm w\norm_{L^1([t-S,t+S]\times K)}).
\end{align*}
Fatou's lemma with respect to $l$ implies that $\liminf_{l\to\infty} |w^{a_k}(\cdot,X^{b_l}(\cdot,t,\cdot))|$, which is equal to $|w^{a_k}(\cdot,X(\cdot,t,\cdot))|$ a.e. on $[t-S,t+S]\times \Omega$, is integrable and 
\begin{align*}
\int_{t-S}^{t+S}\int_\Omega |w^{a_k}(s,X(s,t,x))|dxds\le\liminf_{l\to\infty}\int_{t-S}^{t+S}\int_\Omega |w^{a_k}(s,X^{b_l}(s,t,x))| \le  c(1+\norm w\norm_{L^1([t-S,t+S]\times K)}).
\end{align*}
Then, Fatou's lemma with respect to $k$ implies that $\liminf_{k\to\infty} |w^{a_k}(\cdot,X(\cdot,t,\cdot))|$, which is equal to $|w(\cdot,X(\cdot,t,\cdot))|$ on $([t-S,t+S]\times \Omega)\setminus\tilde{E}$,  is integrable. Hence,  $w(\cdot,X(\cdot,t,\cdot))$ belongs to $L^1([t-S,t+S]\times\Omega)$. 

Observe that 
\begin{align*}
&\norm w^k(\cdot,X^k(\cdot,t,\cdot)) -w(\cdot,X(\cdot,t,\cdot))\norm_{L^1([t-S,t+S]\times\Omega)}\\
&\quad \le \uwave{\norm w^k(\cdot,X^k(\cdot,t,\cdot)) -w^{a_l}(\cdot,X^k(\cdot,t,\cdot))\norm_{L^1([t-S,t+S]\times\Omega)}}_{\rm (i)} \\
&\qquad +\uwave{\norm w^{a_l}(\cdot,X^k(\cdot,t,\cdot)) -w^{a_l}(\cdot,X(\cdot,t,\cdot))\norm_{L^1([t-S,t+S]\times\Omega)}}_{\rm (ii)}\\
&\qquad +\uwave{\norm w^{a_l}(\cdot,X(\cdot,t,\cdot)) -w(\cdot,X(\cdot,t,\cdot))\norm_{L^1([t-S,t+S]\times\Omega)},}_{\rm (iii)}
\end{align*}
where $\{w^{a_l}\}_{l\in\N}$ is the a.e. pointwise convergent subsequence mentioned at the beginning of the proof.  
Let $\nu>0$ be arbitrary. 
By Proposition \ref{classical2}, we have 
\begin{align*}
{\rm (i)}\le c\norm w^k-w^{a_l}\norm_{L^1([t-S,t+S]\times K)}\le c\norm w^k-w\norm_{L^1([t-S,t+S]\times K)}+c\norm w^{a_l}-w\norm_{L^1([t-S,t+S]\times K)}.   
\end{align*}
Hence, there exists $\theta_\nu\in \N$ such that (i)$<\nu$ for all $k,l\ge \theta_\nu$.
Next we take care of (iii).  
Since $w^{a_l}(s,X(s,t,x))\to w(s,X(s,t,x)) $ for all $(s,x)\in ([t-S,t+S]\times\Omega)\setminus \tilde{E}$ as $l\to\infty$, Egorov's theorem implies that for each $\delta>0$ there exists a closed set $\Gamma_\delta\subset [t-S,t+S]\times\Omega$ such that $\meas(\Gamma_\delta)>\meas([t-S,t+S]\times\Omega)-\delta$ and  $w^{a_l}(s,X(s,t,x))\to w(s,X(s,t,x)) $ uniformly on $\Gamma_\delta$ as $l\to\infty$. 
On the other hand, setting $\Gamma_\delta^\circ:=[t-S,t+S]\times\Omega\setminus \Gamma_\delta$, we see that for any $m\in\N$,
\begin{align*}
\iint_{\Gamma_\delta^\circ} |w^{a_l}(s,X^m(s,t,x))|dxds
&=\iint_{\{G^m(s,t,x)\,|\,(s,x)\in \Gamma_\delta^\circ\}} |w^{a_l}(s,y)| |\det \p_yX^m(t,s,y) |  dyds\\
&\le c\iint_{\{G^m(s,t,x)\,|\,(s,x)\in \Gamma_\delta^\circ\}} |w^{a_l}(s,y)|dyds\\
&\le c\iint_{\{G^m(s,t,x)\,|\,(s,x)\in \Gamma_\delta^\circ\}} |w(s,y)|dyds+c\norm w^{a_l}-w\norm_{L^1([t-S,t+S]\times\Omega)},
\end{align*}
where Jacobi's formula $\frac{d}{dt}(\det \p_yX^m(t,s,y))=(\det \p_yX^m(t,s,y))(\nabla\cdot v^m(t,y))$ is used. 
Since 
$$\meas(\{G^m(s,t,x)\,|\,(s,x)\in \Gamma_\delta^\circ\})\le c\meas ( \Gamma_\delta^\circ) <c\delta$$
 due to Proposition \ref{classical3},   absolute continuity of the integral of $|w|$ implies that there exists $0<\delta\ll1$  such that 
 \begin{align*}
\iint_{\Gamma_\delta^\circ} |w^{a_l}(s,X^m(s,t,x))|dxds <2\nu \mbox{\quad for all $l\ge \theta_\nu$ and $m\in \N$}. 
\end{align*}
Then, Fatou's lemma applied to $\iint_{\Gamma_\delta^\circ} |w^{a_l}(s,X^{b_m}(s,t,x))|dxds$ with respect to $m$ yields 
$$\iint_{\Gamma_\delta^\circ} |w^{a_l}(s,X(s,t,x))|dxds\le2 \nu  \mbox{\quad for all $l\ge \theta_\nu$}.$$
Absolute continuity of the integral of $w(\cdot,X(\cdot,t,\cdot))$ implies that there exists  $0<\delta\ll1$ such that 
\begin{align*}
\iint_{\Gamma_\delta^\circ} |w(s,X(s,t,x))|dxds <\nu. 
\end{align*}
Fix such  $\delta>0$. 
The uniform convergence of  $w^{a_l}(s,X(s,t,x))$ to $w(s,X(s,t,x))$ on $\Gamma_\delta$ as $l\to\infty$ implies that there exists $l\ge \theta_\nu$ such that  
$$\iint_{\Gamma_\delta} |w^{a_l}(s,X(s,t,x))- w(s,X(s,t,x))|dxds<\nu,$$
which leads to (iii)$<4\nu$.  
With this  $l$ fixed, we see that (ii)$\to 0$ as $k\to\infty$, since  $w^{a_l}$ is Lipschitz continuous with respect to the space variable\footnote{Note also that $K$ is convex.} and $\norm X^k(\cdot,t,\cdot)- X(\cdot,t,\cdot)\norm_{L^2([t-S,t+S]\times\Omega)}\to 0$ as $k\to\infty$.  
Thus, we see that there exists $k_\nu>0$ such that for all $k\ge k_\nu$ we have 
$$\norm w^k(\cdot,X^k(\cdot,t,\cdot)) -w(\cdot,X(\cdot,t,\cdot))\norm_{L^1([t-S,t+S]\times\Omega)}< 6\nu.$$
 This concludes the assertion 1. 

2. We may follow the same reasoning. 
\end{proof}
\begin{Prop}\label{FL-preservative}
For each $t\in\R$, the map $X(\cdot,t,\cdot)$ satisfies   
\begin{align}\label{w-ODE}
\int_\R\int_\Omega \Big\{X(s,t,x)-x-\int_t^s v(r,X(r,t,x))dr\Big\}\varphi(s,x)dsdx=0\quad\mbox{for all $\varphi\in C^\infty_0(\R\times \Omega)$}.
\end{align}
Furthermore, it holds that  for each  $s,t,\tau\in\R$,
\begin{align*}
 &X(s,t, X(t,s,x))=x,\quad  X(s,t,x)=X(s,\tau, X(\tau,t,x)) \quad \mbox{for  a.e. $x\in\Omega$}.
\end{align*}
\end{Prop}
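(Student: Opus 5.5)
The plan is to obtain all three identities by passing to the limit in the corresponding identities for the smooth flows $X^k$, which hold pointwise.

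\textbf{Integral identity \eqref{w-ODE}.} Fix $t$ and a test function $\varphi\in C^\infty_0(\R\times\Omega)$ with ${\rm supp}(\varphi)\subset[t-S,t+S]\times\Omega$. Multiply \eqref{flow^k} by $\varphi(s,x)$ and integrate over $[t-S,t+S]\times\Omega$. The terms $\iint X^k\varphi$ and $\iint x\varphi$ converge to $\iint X\varphi$ and $\iint x\varphi$, because $X^k(\cdot,t,\cdot)\to X(\cdot,t,\cdot)$ in $L^2$. For the nonlocal term I apply Lemma \ref{FL-product}-1 with $w:=v_i$, extended by zero to $K$, and $w^k:=v^k_i$. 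This gives
\[
v^k(\cdot,X^k(\cdot,t,\cdot))\to v(\cdot,X(\cdot,t,\cdot)) \quad\text{in } L^1([t-S,t+S]\times\Omega)^3 .
\]
Consequently
\[
\Big|\iint\int_t^s\big(v^k(r,X^k(r,t,x))-v(r,X(r,t,x))\big)\,dr\,\varphi\,dx\,ds\Big|\le 2S\max|\varphi|\,\big\|v^k(X^k)-v(X)\big\|_{L^1}\to0,
\]
and \eqref{w-ODE} follows. This step also shows that $v(\cdot,X(\cdot,t,\cdot))$ is in $L^1_{s\text{-loc}}$.

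\textbf{Group property $X(s,\tau,X(\tau,t,x))=X(s,t,x)$.} For the smooth flows this holds exactly: $X^k(s,t,x)=X^k(s,\tau,X^k(\tau,t,x))$. Since $X^k(s,t,\cdot)\to X(s,t,\cdot)$ in $L^2(\Omega)$ for every $s,t$, it suffices to show
\[
X^k(s,\tau,X^k(\tau,t,\cdot))\to X(s,\tau,X(\tau,t,\cdot)) \quad\text{in } L^1(\Omega).
\]
I split the difference in two.
\begin{enumerate}
\item The first piece is $X^k(s,\tau,X^k(\tau,t,\cdot))-X(s,\tau,X^k(\tau,t,\cdot))$. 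Set $Y^k:=X^k(s,\tau,\cdot)$ and $Y:=X(s,\tau,\cdot)$, both bounded and extended to $K$ (outside $\Omega$, $X^k(s,\tau,\cdot)$ is the identity off $K$; define $Y$ by zero-extension or as the identity). By the change of variables in Proposition \ref{classical2}, this piece is bounded by $c\|Y^k-Y\|_{L^1(K)}$, up to the contribution of $K\setminus\Omega$. That contribution must be controlled using the trajectory-leakage estimates, as in \eqref{sop2}.
\item The second piece is $Y(X^k(\tau,t,\cdot))-Y(X(\tau,t,\cdot))$. It is handled by Lemma \ref{FL-product}-2, applied with $w:=Y_i$ and an approximating sequence $w^k\to Y_i$ in $L^1(K)$.
\end{enumerate}

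The main obstacle is the mismatch in the second piece: Lemma \ref{FL-product}-2 delivers $w^k(X^k)\to w(X)$ for a \emph{smooth} sequence $w^k$, whereas here the outer map $Y^k=X^k(s,\tau,\cdot)$ is itself being approximated. I will resolve this with a three-term splitting in the style of the proof of Lemma \ref{FL-product}. Choose a fixed smooth $w^m$ close to $Y$ in $L^1(K)$, and write
\[
Y^k(X^k)-Y(X)=\big[Y^k(X^k)-w^m(X^k)\big]+\big[w^m(X^k)-w^m(X)\big]+\big[w^m(X)-Y(X)\big].
\]
\begin{itemize}
\item The first bracket is at most $c\|Y^k-w^m\|_{L^1(K)}$ by the Jacobian bound, and this is small for large $k,m$. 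This requires $Y^k\to Y$ in $L^1(K)$, including on $K\setminus\Omega$, where $Y^k\to{\rm id}$ a.e. since the support of $v^k$ shrinks. So I define $Y:={\rm id}$ on $K\setminus\Omega$.
\item The second bracket tends to zero as $k\to\infty$ because $w^m$ is Lipschitz.
\item The third bracket is small by Lemma \ref{FL-product}-2 and Fatou.
\end{itemize}
Note that $X(\tau,t,x)$ may lie on $\p\Omega$; this is harmless because $\meas(\p\Omega)=0$, and Lemma \ref{FL-image2} shows that $X(\tau,t,\cdot)$ does not charge null sets.

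\textbf{Inverse identity $X(s,t,X(t,s,x))=x$.} This is the special case of the group property obtained by taking the triple $(s,t,\tau)\mapsto(s,s,t)$ with the roles of $s,t$ swapped, together with $X(s,s,x)=x$. The latter holds because $X^k(s,s,\cdot)={\rm id}$ and $X^k(s,s,\cdot)\to X(s,s,\cdot)$ in $L^2(\Omega)$.
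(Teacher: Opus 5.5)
Your proposal is correct and follows the paper's proof. You pass to the limit in the smooth identities for $X^k$, using part 1 of Lemma \ref{FL-product} with $w^k=v^k_i$ for \eqref{w-ODE}, and part 2 (with $X$ extended by the identity off $\overline{\Omega}$) for the flow identities. The ``mismatch'' you identify is not actually an obstacle: part 2 of Lemma \ref{FL-product} holds for any $C^1$ sequence $w^k\to w$ in $L^1(K)$, so it applies directly with $w^k:=X^k_i(s,\tau,\cdot)$, which is what the paper does, and your three-term splitting just re-derives that lemma's proof. Likewise, deducing the inverse identity from the group law plus $X(s,s,\cdot)=\mathrm{id}$ is equivalent to the paper's direct application of the same lemma.
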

\begin{proof}
Take any  $\varphi\in C^\infty_0(\R\times \Omega)$. With $S>0$ such that supp$(\varphi)\subset[t-S,t+S]\times\Omega$, it holds that  $\norm X^k(\cdot,t,\cdot)- X(\cdot,t,\cdot)\norm_{L^2([t-S,t+S]\times\Omega)}\to 0$ as $k\to\infty$. 
Recall that for each $(s,x)\in [t-S,t+S]\times(K\setminus\overline{\Omega})$ we  have $X^k(s,t,x)\to x$ as $k\to\infty$. 
Hence, with the extension $X(s,t,x):=x$ outside  $\overline{\Omega}$, we see that  $\norm X^k(\cdot,t,\cdot)- X(\cdot,t,\cdot)\norm_{L^1([t-S,t+S]\times K)}\to 0$ as $k\to\infty$ and we may apply Lemma \ref{FL-product} with $w^k=v^k_i$ ($i=1,2,3$).  
It follows from  \eqref{flow^k} that 
\begin{align*}
\int_\R\int_\Omega \Big\{X^k(s,t,x)-x-\int_t^s v^k(r,X^k(r,t,x))dr\Big\}\varphi(s,x)dsdx=0.
\end{align*}
We send  $k\to\infty$ to obtain \eqref{w-ODE}. 

It follows from 2.\,of Lemma \ref{FL-product} with $w^k=X^k(s,t,\cdot)$ (we need extension outside $\overline{\Omega}$ as above) that  $X(s,t,X(t,s,\cdot))$ belongs to $L^2(\Omega)$, while the equality $X^k(s,t,X^k(t,s,x))=x$ for all $k$ yields  $X(s,t,X(t,s,x))=x$. 
 Similarly, $X^k(s,t,x)=X^k(s,\tau,X^k(\tau,t,x))$ provides the last assertion.
\end{proof}
\begin{Prop}\label{FL2}
Let $t\in\R$ be arbitrary. 
There exists a null set $E_\Omega\subset\Omega$ such that for each fixed $x\in\Omega\setminus E_\Omega$ the function $v(\cdot,X(\cdot,t,x))$ is locally integrable  and there exists a null set $N(x)\subset\R$ for which  
$$\mbox{$\dis X(s,t,x)=x+\int_t^s v(r,X(r,t,x))dr$ for all $s\in \R\setminus N(x)$},$$ where  there exists an absolutely continuous curve $\gamma(\cdot)$ that is equal to $X(\cdot,t,x)$  on $ \R\setminus N(x)$ and satisfies  $\gamma(s)=x+\int_t^s v(r,\gamma(r))dr$ for all $s\in\R$. 
\end{Prop}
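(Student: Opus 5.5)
The plan is to upgrade the weak identity \eqref{w-ODE} of Proposition \ref{FL-preservative} to a pointwise identity along a.e. trajectory. Fix $t$ and work on $[t-n,t+n]\times\Omega$ for each $n\in\N$, then take a countable union of the exceptional sets over $n$.

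\textbf{Step 1 (integrability along trajectories).} By 1.\ of Lemma \ref{FL-product} with $w=v_i$ and $w^k=v_i^k$, the function $v(\cdot,X(\cdot,t,\cdot))$ belongs to $L^1([t-n,t+n]\times\Omega)^3$. Fubini's theorem then gives a null set $E_n\subset\Omega$ such that $r\mapsto v(r,X(r,t,x))$ is integrable on $[t-n,t+n]$ for every $x\in\Omega\setminus E_n$. Consequently the function
\[
F(s,x):=X(s,t,x)-x-\int_t^s v(r,X(r,t,x))\,dr
\]
is measurable on $[t-n,t+n]\times\Omega$. It is also integrable there, since $X$ is bounded with values in $\overline{\Omega}$.

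\textbf{Step 2 (from weak to a.e. vanishing).} Identity \eqref{w-ODE} says that $\iint F\varphi\,ds\,dx=0$ for all $\varphi\in C^\infty_0((t-n,t+n)\times\Omega)$. Because $F\in L^1_{\rm loc}$, the fundamental lemma of the calculus of variations gives $F=0$ a.e. on $[t-n,t+n]\times\Omega$. Applying Fubini once more yields a null set $E'_n\subset\Omega$ such that, for $x\notin E'_n$, the section $N_n(x):=\{s\in[t-n,t+n]\,|\,F(s,x)\neq0\}$ is a null set. Now set $E_\Omega:=\bigcup_n(E_n\cup E'_n)$ and $N(x):=\bigcup_n N_n(x)$. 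This gives the displayed identity for every $x\in\Omega\setminus E_\Omega$ and every $s\in\R\setminus N(x)$.

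\textbf{Step 3 (the absolutely continuous representative).} For $x\in\Omega\setminus E_\Omega$, define $\gamma(s):=x+\int_t^s v(r,X(r,t,x))\,dr$ for all $s\in\R$. This curve is locally absolutely continuous and satisfies $\gamma(t)=x$. By Step 2, $\gamma=X(\cdot,t,x)$ on $\R\setminus N(x)$. It remains to replace $X(r,t,x)$ by $\gamma(r)$ inside the integral, and I expect this to be the main obstacle. The difficulty is that $\gamma(r)=X(r,t,x)$ holds only for $r\notin N(x)$, so the two integrands agree for a.e. $r$. Since $v$ is merely an $L^1$ class, evaluating it along a curve is not well defined unless one fixes a representative.

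To handle this, I would fix a Borel representative of $v$ at the start and define $X$ and the integrals with it. The integrand $r\mapsto v(r,X(r,t,x))$ is a genuine function of $r$, and it coincides with $v(r,\gamma(r))$ for every $r\notin N(x)$. Changing an integrand on a Lebesgue-null set of times does not change its integral. Hence $\int_t^s v(r,\gamma(r))\,dr=\int_t^s v(r,X(r,t,x))\,dr$, which gives $\gamma(s)=x+\int_t^s v(r,\gamma(r))\,dr$ for all $s$.

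The only remaining care is that the choice of representative of $v$ does not affect Step 1. This holds because Lemma \ref{FL-image2} shows that the preimage under $G(\cdot,t,\cdot)$ of a space-time null set is null. Therefore $v(\cdot,X(\cdot,t,\cdot))$ is independent of the representative up to a null set of $(s,x)$, which can be absorbed into $E_\Omega$ by Fubini.
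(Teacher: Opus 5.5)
Your proposal is correct and follows essentially the same route as the paper: you get local integrability of $v(\cdot,X(\cdot,t,\cdot))$ from Lemma \ref{FL-product} plus Fubini, a.e. vanishing of the defect from \eqref{w-ODE} (which you justify explicitly via the fundamental lemma), the null sets $N(x)$ from Fubini again, and then you define $\gamma$ and swap the integrand off $N(x)$. Your remark about fixing a Borel representative of $v$, and why this is harmless by Lemma \ref{FL-image2}, makes explicit a point that the paper's proof passes over silently.
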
 
\begin{proof}
Since $v(\cdot,X(\cdot,t,\cdot))$ is $s$-locally integrable on $\R\times\Omega$, Fubini's theorem implies that there exists a null set $E^0_\Omega\subset\Omega$ such that $v(\cdot,X(\cdot,t,x))$ is $s$-locally integrable for each fixed $x\in \Omega\setminus E^0_\Omega$.  
Due to Proposition \ref{FL-preservative}, we have  
$$\Big|X(s,t,x)-x-\int_t^s v(r,X(r,t,x))dr\Big|=0\quad\mbox{for a.e. $(s,x)\in \R\times(\Omega\setminus E^0_\Omega)$}.$$
 By Fubini's theorem, there exists a null set $E^1_\Omega\subset(\Omega\setminus E^0_\Omega)$ such that  for each fixed $x\in\Omega\setminus (E^1_\Omega\cup E^0_\Omega)$ we have $|X(s,t,x)-x-\int_t^s v(r,X(r,t,x))dr|=0$ for a.e. $s\in\R$, i.e.,  there exists a null set $N(x)\subset\R$ such that  
 $$X(s,t,x)=x+\int_t^s v(r,X(r,t,x))dr\quad\mbox{for all $s\in \R\setminus N(x) $.}$$
 Since $v(\cdot,X(\cdot,t,x))$ is locally integrable, $\gamma: R\to\R^3$, $\gamma(s):=x+\int_t^s v(r,X(r,t,x))dr$ is  absolutely continuous and  $\gamma(s)=X(\cdot,t,x)$ for all $s\in \R\setminus N(x)$.  
 Hence, we have $v(\cdot,X(\cdot,t,x))=v(\cdot,\gamma(\cdot))$ for all $s\in \R\setminus N(x)$ and thus  $\gamma(s)=x+\int_t^s v(r,\gamma(r))dr$ for all $s\in\R$, or $\gamma'(s)=v(s,\gamma(s))$ a.e. $s\in\R$ with $\gamma(t)=x$.  
\end{proof}
\begin{proof}[Proof of Theorem \ref{Thm-DL}.] The assertion 1 is confirmed by Proposition \ref{LT-kasoku-2} in the notation $X^k,X$ and Lemma \ref{FL-product}. 
The assertion 2 is confirmed by Proposition  \ref{FL-preservative} and \eqref{RTT1} with the upper and lower estimates of $\meas(X^k(t,s,A))$ based on \eqref{usi2}.  
 The assertion 3 is given in Proposition \ref{FL2}.     \end{proof}
\begin{proof}[Proof of Theorem \ref{FL-preservative2}]
 For the indicator function $\chi_A(\cdot)$ of $A$, we have 
 \begin{align*}
&\int_K\chi_A(X^k(s,t,x))dx =\int_{X^k(s,t,\cdot)^{-1}( A)}dx
 =\meas(X^k(s,t,\cdot)^{-1}(A))=\meas(X^k(t,s,A)),\\
 &\int_{K\setminus\Omega}\chi_A(X^k(s,t,x))dx
 =  \int_{\{ x\in K \setminus \Omega\, |\, X^k(s,t,x)\in A \}}dx
 =\meas(X^k(t,s,A)\setminus\Omega),\\
 & \int_\Omega\chi_A(X^k(s,t,x))dx=\meas(X^k(t,s,A)) - \meas(X^k(t,s,A)\setminus\Omega).
\end{align*}
For each $\ep>0$, there exists an open set $\Omega^\ep\supset\overline{\Omega}$ such that $\meas(\Omega^\ep)<\meas(\overline{\Omega})+\ep=\meas(\Omega)+\ep$ ($\Omega$ is assumed to satisfy $\meas(\overline{\Omega})=\meas(\Omega)$). Since supp$(v^k)\subset \R\times\Omega^\ep$ for all sufficiently large $k$,  we have $X^k(t,s,A)\setminus\Omega\subset \Omega^\ep\setminus\Omega$ and $\meas(X^k(t,s,A)\setminus\Omega)<\ep$, i.e., $\meas(X^k(t,s,A)\setminus\Omega)\to0$ as $k\to\infty$.   
 Due to Lemma \ref{FL-product} with $w=\chi_A$, the integral $ \int_\Omega\chi_A(X(s,t,x))dx=\meas(X(s,t,\cdot)^{-1}( A))$ makes sense, and with the mollification $\chi_A^{k}$ of $\chi_A$ by the mollifier $\eta^{k^{-1}}(\cdot)$, we have 
 \begin{align*}
 &\Big|\int_\Omega\chi_A(X^k(s,t,x))dx-\meas(X(s,t,\cdot)^{-1}( A))\Big|
 \le \norm \chi_A\circ X^k(s,t,\cdot)-\chi_A\circ X(s,t,\cdot)\norm_{L^1(\Omega)}\\
&\qquad \le  \norm \chi_A\circ X^k(s,t,\cdot)-\chi^k_A\circ X^k(s,t,\cdot)\norm_{L^1(\Omega)}
+\norm \chi_A^k\circ X^k(s,t,\cdot)-\chi_A\circ X(s,t,\cdot)\norm_{L^1(\Omega)},\\
& \norm \chi_A\circ X^k(s,t,\cdot)-\chi^k_A\circ X^k(s,t,\cdot)\norm_{L^1(\Omega)}\le c\norm \chi^k_A-\chi_A\norm_{L^1(K)}\to0\mbox{\quad as $k\to\infty$},\\ 
&\norm \chi_A^k\circ X^k(s,t,\cdot)-\chi_A\circ X(s,t,\cdot)\norm_{L^1(\Omega)} \to0\mbox{\quad as $k\to\infty$},
 \end{align*}
 where $c:=e^{\norm \nabla\cdot v\norm_{L^1(I^{s,t},L^\infty( \Omega))}}+1<\infty$ with $I^{s,t}=[s,t]$ or $[t,s]$.
Therefore, we obtain  \eqref{RTT1}.
 
 By the assertion 5 of Lemma \ref{FL-image}, there exist a subsequence $\{X^{a_k}\}\subset\{X^k\}$ for which we find for each $\delta>0$ and $\ep>0$ a closed set $A_\delta\subset \R^3$, open set $O_\ep\subset K$ and $k_\ep\in\N$ such that  
\begin{align}\label{eiji}
\begin{cases}
&A_\delta\subset A\setminus \dot{N}^{s,t}_A\quad \mbox{with  $\meas(A_\delta)>\meas(A)-\delta$},\\
 & \meas(O_\ep)< \meas(X(s,t,A_\delta) )+\ep,\\
& X(s,t,A_\delta) \mbox{ is a closed subset of $O_\ep$},\\
& X^{a_k}(s,t,A_\delta) \mbox{ is a closed subset of $ O_\ep$ for all $k\ge k_\ep$},\\
&\meas(X^{a_k}(s,t, A_\delta ))-\ep<  \meas( X(s,t,A\setminus \dot{N}^{s,t}_A))\quad\mbox{for all $k\ge k_\ep$}.
\end{cases}
\end{align}
Due to injectivity of $X^{a_k}$ and Proposition \ref{classical3}, we have 
\begin{align*}
&\meas(X^{a_k}(s,t, A_\delta ))=\meas(X^{a_k}(s,t, A ))-\meas(X^{a_k}(s,t, A\setminus A_\delta )),\\
&\meas(X^{a_k}(s,t, A\setminus A_\delta ))=\meas(A\setminus A_\delta)+ \int_t^s\int_{X^{a_k}(r,t,A\setminus A_\delta)} \nabla\cdot v^{a_k}(r,x)dxdr\le c\,\,\meas(A\setminus A_\delta)<c\delta.
\end{align*} 
 Since we already know from \eqref{RTT1} that $\{\meas(X^k(s,t, A))\}_{k\in\N}$ is convergent, we obtain 
 \begin{align*}
 \lim_{k\to\infty}\meas(X^{k}(s,t, A)) -c\delta-\ep&=\lim_{k\to\infty}\meas(X^{a_k}(s,t, A)) -c\delta-\ep\\
 &\le \meas( X(s,t,A\setminus \dot{N}^{s,t}_A))\quad\mbox{ for any $\delta>0,\,\,\ep>0$}.
 \end{align*}
We show the estimate from above.  
 Recall that $X(t,s,X(s,t,x))=x$ for all $x\in\Omega\setminus E^{s,t}$.  
For each $y\in  X(s,t,A\setminus \dot{N}^{s,t}_A)\setminus\p\Omega$, there exists $x_y\in A\setminus \dot{N}^{s,t}_A\subset A\setminus E^{s,t}$ such that $ X(s,t,x_y)=y$. 
 Hence, we have $X(t,s,y)=X(t,s,X(s,t,x_y))=x_y\in A$, which means that $y\in X(t,s,\cdot)^{-1}(A)$ and $ X(s,t,A\setminus \dot{N}^{s,t}_A)\setminus\p\Omega\subset X(t,s,\cdot)^{-1}(A)$ to obtain \eqref{RTT5}. 
Therefore, with \eqref{RTT1}, we see that  $\meas( X(s,t,A\setminus \dot{N}^{s,t}_A))=\meas( X(s,t,A\setminus \dot{N}^{s,t}_A)\setminus\p\Omega)\le \meas(X(t,s,\cdot)^{-1}(A))=\lim_{k\to\infty} \meas(X^k(s,t,A))$ to conclude \eqref{RTT2}.  

Our proof of \eqref{RTT3} consists of three steps. As Step 1, we prove using \eqref{eiji}, 
\begin{align}\label{kazu1}
 \alpha:=\int_{X(s,t,A\setminus \dot{N}_A^{s,t})} f(x)dx= \lim_{k\to\infty}  \int_{X^{a_k}(s,t,A)} f(x)dx.
\end{align}
With \eqref{eiji}, observe that 
\begin{align*}
&\int_{X^{a_k}(s,t,A)} f(x)dx-\int_{X(s,t,A\setminus \dot{N}_A^{s,t})} f(x)dx \\
&\quad= \uwave{\int_{X^{a_k}(s,t,A)} f(x)dx-\int_{X^{a_k}(s,t,A_\delta)} f(x)dx}_{\rm(i)}+\uwave{\int_{X^{a_k}(s,t,A_\delta)} f(x)dx-\int_{X(s,t,A\setminus \dot{N}_A^{s,t})} f(x)dx,}_{\rm (ii)} \\
&{\rm(i)}=\int_{X^{a_k}(s,t,A\setminus A_\delta)} f(x)dx,\\
&{\rm(ii)}= \Big(\int_{X^{a_k}(s,t,A_\delta)} -\int_{O_\ep} +\int_{O_\ep} -\int_{X(s,t,A_\delta)} +\int_{X(s,t,A_\delta)}- \int_{X(s,t,A\setminus \dot{N}_A^{s,t})}\Big) f(x)dx\\
&\quad = -\int_{O_\ep\setminus X^{a_k}(s,t, A_\delta)}  \!\!\!\!\!f(x)dx+\int_{O_\ep\setminus X(s,t, A_\delta)}  \!\!\!\!\!f(x)dx
-  \int_{X(s,t,A\setminus \dot{N}_A^{s,t})\setminus X(s,t,A_\delta)} \!\!\!\!\!f(x)dx\quad  \mbox{for all $k\ge k_\ep$}.
\end{align*} 
Set $J_\theta(f):=\sup \{\int_B|f|dx \,:\,B\subset K,\,\,\,\meas(B)\le \theta\}$ with $\theta\ge0$, where $ J_\theta(f)\to0$ as $\theta\to0$ due to absolute continuity of the integral of $|f|$.  
Since $\meas(X^{a_k}(s,t,A\setminus A_\delta))\le c \,\,\meas(A\setminus A_\delta)<c\delta$ for all $k\in\N$, we have $|\mbox{(i)}|<J_{c\delta}(f)$ for all $k\in\N$. 
Since $X(s,t,A_\delta)$ is measurable and $A_\delta$ does not contain any point of $E^{s,t}$, the assertion 5 of Lemma \ref{FL-image} in terms of the Borel measurable set $A_\delta$ in place of $B$ holds with the empty set as trimming, and consequently \eqref{RTT2} is applicable yielding $\meas(X(s,t,A_\delta))=\lim_{k\to\infty} \meas(X^{a_k}(s,t,A_\delta))$. 
Hence, we see that $\meas(O_\ep\setminus X^{a_k}(s,t,A_\delta))\to \meas(O_\ep)-\meas(X(s,t,A_\delta))<\ep$, which implies that there exists $\tilde{k}_\ep\ge k_\ep$ such that if $k\ge \tilde{k}_\ep$ we have   $\meas(O_\ep\setminus X^{a_k}(s,t,A_\delta))<2\ep$ and $|\int_{O_\ep\setminus X^{a_k}(s,t, A_\delta)} f(x)dx|\le J_{2\ep}(f)$.  
It is clear that $|\int_{O_\ep\setminus X(s,t, A_\delta)} f(x)dx|\le J_\ep(f)$. 
As we already confirmed, we have
\begin{align*}
&\meas(X(s,t,A\setminus \dot{N}_A^{s,t})\setminus X(s,t,A_\delta))=\meas(X(s,t,A\setminus \dot{N}_A^{s,t}))-\meas( X(s,t,A_\delta))\\
 &=\lim_{k\to\infty} \meas(X^{a_k}(s,t,A)) -\lim_{k\to\infty} \meas(X^{a_k}(s,t,A_\delta)) =\lim_{k\to\infty} \meas(X^{a_k}(s,t,A\setminus A_\delta))\le c\delta. 
 \end{align*}
 Therefore, we obtain $|\int_{X(s,t,A\setminus \dot{N}_A^{s,t})\setminus X(s,t,A_\delta)} f(x)dx|\le J_{c\delta}(f)$. 
To sum up, for any $\ep>0,\,\,\delta>0$, it holds that $|\mbox{(i)}|+|\mbox{(ii)}|<2J_{c\delta}(f)+J_{2\ep}(f)+J_{\ep}(f)$ for all $k\ge\tilde{k}_\ep$.  
Thus, we conclude \eqref{kazu1}.

As Step 2, we prove
\begin{align}\label{kazu2}
 \int_{X(t,s,\cdot)^{-1}(A)} f(x)dx= \lim_{k\to\infty}  \int_{X^{a_k}(t,s,\cdot)^{-1}(A)} f(x)dx= \lim_{k\to\infty}  \int_{X^{a_k}(s,t,A)} f(x)dx=\alpha.
\end{align}
Due to \eqref{RTT2} and $\meas(\Omega)=\meas(\overline{\Omega})$, we have  $\meas(X(s,t,A\setminus \dot{N}^{s,t}_{A}))=\meas(X(s,t,A\setminus \dot{N}^{s,t}_{A})\setminus\p\Omega)=\meas(X(t,s,\cdot)^{-1}(A))$. 
Furthermore,   by \eqref{RTT5}, we have $ X(s,t,A\setminus \dot{N}^{s,t}_{A})\setminus\p\Omega\subset X(t,s,\cdot)^{-1}(A)$ and $\meas(X(t,s,\cdot)^{-1}(A) \setminus (X(s,t,A\setminus \dot{N}^{s,t}_{A})\setminus\p\Omega) )=0$. 
Hence,  by \eqref{kazu1}, we obtain   
\begin{align*}
\int_{X^{a_k}(t,s,\cdot)^{-1}(A)} f(x)dx&=\int_{X(s,t,A\setminus \dot{N}^{s,t}_{A})}f(x)dx+  \int_{X^{a_k}(s,t,A)} f(x)dx-\int_{X(s,t,A\setminus \dot{N}^{s,t}_{A})}f(x)dx,\\
\int_{X^{a_k}(s,t,A)} f(x)dx&-\int_{X(s,t,A\setminus \dot{N}^{s,t}_{A})}f(x)dx\to0\quad \mbox{as $k\to\infty$},\\
\int_{X(s,t,A\setminus \dot{N}^{s,t}_{A})}f(x)dx&
= \int_{X(t,s,\cdot)^{-1}(A)}f(x)dx-  \int_{X(t,s,\cdot)^{-1}(A) \setminus (X(s,t,A\setminus \dot{N}^{s,t}_{A})\setminus\p\Omega)  }f(x)dx\\
&= \int_{X(t,s,\cdot)^{-1}(A)}f(x)dx,
\end{align*}
which confirms \eqref{kazu2}. 
The key to the final step is that $\alpha$ turns out to be independent of  the choice of subsequence $\{X^{a_k}\}$. 

 As Step 3, suppose that  \eqref{RTT3} does not hold. 
Then, there exists $\nu>0$ such that for each $k\in\N$ we find $b_k\ge k$ for which $|\int_{X^{b_k}(s,t,A)}f(x)dx-\alpha|\ge\nu$ for all $k\in\N$.  
Since $X^{b_k}(s,t,\cdot)\to X(s,t,\cdot)$ in $L^2(\Omega)^3$ as $k\to\infty$, we may use the assertion 5 of Lemma \ref{FL-image} with $\{X^{b_k}\}$ in place of $\{X^k\}$ and follow the reasoning for \eqref{kazu1} and \eqref{kazu2}, to confirm that  there exists a subsequence $\{X^{c_k}\}\subset\{X^{b_k}\}$ such that 
\begin{align*}
\alpha= \int_{X(t,s,\cdot)^{-1}(A)} f(x)dx= \lim_{k\to\infty}  \int_{X^{c_k}(t,s,\cdot)^{-1}(A)} f(x)dx= \lim_{k\to\infty}  \int_{X^{c_k}(s,t,A)} f(x)dx.
\end{align*}
Since  $|\int_{X^{c_k}(s,t,A)}f(x)dx-\alpha|\ge\nu>0$ for all $k\in\N$, we reach a contradiction and complete the proof of  \eqref{RTT3}.

Finally, we prove \eqref{RTT4}. Due to \eqref{RTT2}, we have  $\meas(X(t,s,A\setminus \dot{N}^{t,s}_{A}))=\meas(X(t,s,A\setminus \dot{N}^{t,s}_{A})\setminus\p\Omega)=\meas(X(s,t,\cdot)^{-1}(A))$. 
Furthermore,   by \eqref{RTT5}, we have $ X(t,s,A\setminus \dot{N}^{t,s}_{A})\setminus\p\Omega\subset X(s,t,\cdot)^{-1}(A)$ and $\meas(X(s,t,\cdot)^{-1}(A) \setminus (X(t,s,A\setminus \dot{N}^{t,s}_{A})\setminus\p\Omega) )=0$. 
Hence,  by \eqref{RTT3}, we obtain   
\begin{align*}
\int_{X^k(s,t,\cdot)^{-1}(A)} f(x)dx&=\int_{X(t,s,A\setminus \dot{N}^{t,s}_{A})}f(x)dx+  \int_{X^k(t,s,A)} f(x)dx-\int_{X(t,s,A\setminus \dot{N}^{t,s}_{A})}f(x)dx,\\
\int_{X^k(t,s,A)} f(x)dx&-\int_{X(t,s,A\setminus \dot{N}^{t,s}_{A})}f(x)dx\to0\quad \mbox{as $k\to\infty$},\\
\int_{X(t,s,A\setminus \dot{N}^{t,s}_{A})}f(x)dx&
= \int_{X(s,t,\cdot)^{-1}(A)}f(x)dx-  \int_{X(s,t,\cdot)^{-1}(A) \setminus (X(t,s,A\setminus \dot{N}^{t,s}_{A})\setminus\p\Omega)  }f(x)dx\\
&= \int_{X(s,t,\cdot)^{-1}(A)}f(x)dx,
\end{align*}
which confirms  \eqref{RTT4}.
\end{proof}



\begin{proof}[Proof of Theorem \ref{main}.] 
Due to  classical Reynolds transport theorem, it holds that
\begin{align}\label{rei}
\int_{X^k(s,t,A)}g(s,x)dx=&\int_{A}g(t,x)dx+\int_t^s\int_{X^k(r,t,A)}\left\{\frac{\partial g}{\partial s}(r,x)+\nabla\cdot\Big(g(r,x)v^k(r,x)\Big)\right\}dxdr\\\nonumber
=&\int_{A}g(t,x)dx+\int_t^s\int_{X^k(r,t,A)}\left\{\frac{\partial g}{\partial s}(r,x)+\nabla\cdot\Big(g(r,x)v(r,x)\Big)\right\}dxdr\\\nonumber
&+\int_t^s\int_{X^k(r,t,A)}\left\{\nabla\cdot\Big(g(r,x)v^k(r,x)\Big)-\nabla\cdot\Big(g(r,x)v(r,x)\Big)\right\}dxdr,
\end{align}
where we note that the graph of $X^k(r,t,A)$ with respect to $r$ is measurable\footnote{The continuous maps $F,\tilde{F}:\R^4\to\R^4$, $F(r,x):=(r,X^k(r,t,x))$, $\tilde{F}(r,x):=(r,X^k(t,r,x))$ are bijective and $\tilde{F}(F(I\times A))=I\times A$ for all intervals $I\subset\R$. Hence, $F(I\times A)=\tilde{F}^{-1}(I\times A)$ is Borel measurable.} in $\R^4$; Fubini's theorem implies that $r\mapsto \int_{X^k(r,t,A)}\{\frac{\partial g}{\partial s}(r,x)+\nabla\cdot(g(r,x)v(r,x))\}dx$ is measurable; the last term on the right hand side vanishes as $k\to\infty$.  
There exists a null set $N\subset I^{s,t}$ ($=[t,s]$ or $[s,t]$) for which $v(r,\cdot)\in L^1(\Omega)^3, \nabla\cdot v(r,\cdot)\in L^1(\Omega)$,  and  for all $k\in\N$ and each $r\in I^{s,t}\setminus N$,  
\begin{align*}
\Big|\int_{X^k(r,t,A)}\left\{\frac{\partial g}{\partial s}(r,x)+\nabla\cdot\Big(g(r,x)v(r,x)\Big)\right\}dx\Big|\le \Big|\!\Big| \frac{\partial g}{\partial s}(r,\cdot)+\nabla\cdot\Big(g(r,\cdot)v(r,\cdot)\Big)\Big|\!\Big|_{L^1(\Omega)},
\end{align*}
where the light hand side is integrable with respect to $r$; due to  \eqref{RTT3},  we have for each $r\in I^{s,t}\setminus N$,
$$\lim_{k\to\infty} \int_{X^k(r,t,A)}\left\{\frac{\partial g}{\partial s}(r,x)+\nabla\cdot\Big(g(r,x)v(r,x)\Big)\right\}dx= \int_{X(r,t,A\setminus \dot{N}_A^{r,t})}\left\{\frac{\partial g}{\partial s}(r,x)+\nabla\cdot\Big(g(r,x)v(r,x)\Big)\right\}dx,$$
from which we confirm that  the map 
$r\mapsto \int_{X(r,t,A\setminus \dot{N}_A^{r,t})}\left\{\frac{\partial g}{\partial s}(r,x)+\nabla\cdot(g(r,x)v(r,x))\right\}dx$ 
 is measurable. 
Sending $k\to\infty$ in \eqref{rei} with Lebesgue's dominated convergence  theorem, we obtain  \eqref{trans2}. 
 \eqref{trans3} follows from the same reasoning with  \eqref{RTT4}. 
 \eqref{trans1}, \eqref{trans0} are obtained by choosing $g\equiv1$.  
\end{proof}


\noindent{\bf Acknowledgements.}
The author is grateful to Norihisa Ikoma (Keio University, Japan) for helpful  discussion on the proof of the assertion 1 of Lemma \ref{FL-image} and to 
 Dieter Bothe (Technische Universit\"at Darmstadt, Germany)   for sharing the work \cite{BK} with valuable discussions.  
The author is supported by JSPS Grants-in-Aid for Scientific Research (C) \#22K03391.

\noindent{\bf Data availability.}
Data sharing not applicable to this article as no datasets were generated or analyzed during the current study.

\noindent{\bf Conflicts of interest statement.} The author states that there is no conflict of interest.
%

 
\end{document}